\documentclass[11pt]{article}
\usepackage[utf8]{inputenc}
\usepackage{amsmath}
\usepackage{graphicx}
\usepackage[margin = 1 in]{geometry}
\usepackage{amsfonts}
\usepackage{amsthm}
\usepackage{mathrsfs}
\usepackage{bigints}
\usepackage{amssymb}
\usepackage{booktabs}
\usepackage{xspace}
\usepackage{tikz}
\usepackage{tikz-cd}
\usepackage{enumerate}
\usepackage{hyperref}
\usepackage{upgreek}
\usepackage{verbatim}
\usepackage{color}
\newcommand\numberthis{\addtocounter{equation}{1}\tag{\theequation}}
\newcommand{\R}{\mathbb{R}}
\newcommand{\RR}{\mathbb{R}}
\newcommand{\PP}{\mathbb{P}}

\newcommand{\QQ}{\mathbb{Q}}
\newcommand{\EE}{\mathbb{E}}

\newcommand{\N}{\mathbb{N}}
\newcommand{\NN}{\mathbb{N}}

\newcommand{\cla}{\mathcal{A}}
\newcommand{\cld}{\mathcal{D}}
\newcommand{\clg}{\mathcal{G}}

\newcommand{\clr}{\mathcal{R}}
\newcommand{\clp}{\mathcal{P}}
\newcommand{\clb}{\mathcal{B}}
\newcommand{\cln}{\mathcal{N}}
\newcommand{\clx}{\mathcal{X}}
\newcommand{\clf}{\mathcal{F}}
\newcommand{\Om}{\Omega}
\newcommand{\om}{\omega}
\newcommand{\bs}[1]{\boldsymbol{#1}}

\allowdisplaybreaks

\theoremstyle{plain}
\newtheorem{theorem}{Theorem}[section]
\newtheorem{lemma}[theorem]{Lemma}
\newtheorem{corollary}[theorem]{Corollary}

\newtheorem{proposition}[theorem]{Proposition}

\theoremstyle{definition}
\newtheorem{remark}{Remark}

\theoremstyle{definition}

\newtheorem{definition}{Definition}

\title{Rank Based Routing in Large Server Systems under Extreme Congestion}
\begin{document}
\author{Sayan Banerjee, Amarjit Budhiraja, and Eva Loeser}
\date{}
\maketitle

\begin{abstract}
We study a large system of $n$ parallel queues operating in an extreme heavy traffic regime, where each server processes jobs at rate $n$ and jobs arrive at a central dispatcher at rate $n^2 - (a-b)\sqrt{n}$, for fixed $a>b>0$. Thus the heavy traffic slack, per queue, is $(a-b)/\sqrt{n}$ which is much smaller than in the standard heavy traffic regime where the slack is $O(\sqrt{n})$. Upon arrival, jobs are routed according to a marginal join-the-shortest-queue policy: a small stream of arrivals, at rate $b\sqrt{n}$, are routed to the current shortest queue, while the remaining stream of jobs, at rate $n^2 - a\sqrt{n}$ , are routed independently at random. This policy substantially reduces communication overhead relative to full join-the-shortest-queue routing and provides a mechanism for providing high quality of service to premium jobs by routing them to the shortest queue, while simultaneously performing load balancing in the overall system.

Under the diffusive scaling, we prove  limit theorems for the ranked queue-length process and the associated gap process as $n \to \infty$. The limiting object is an infinite-dimensional reflected Atlas process, with reflection at the origin and rank-based drift that acts on the lowest particle. A key feature of the limit is that its dynamics depend only on the parameter $b$ governing the  stream of shortest-queue jobs, while the parameter $a$ only appears through the selection of invariant distributions. We establish well-posedness of the reflected infinite Atlas model and characterize a one-parameter family of product-form stationary distributions, parametrized by  $a$ and $b$, for its gap process.

To connect the limiting diffusion with the stationary behavior of the prelimit queueing system, we introduce a closely related {\em system with pauses} that agrees with the original dynamics at the diffusion scale and, at the same time, admits an exact representation as an open Jackson network. This representation yields explicit finite-$n$ stationary gap distributions, whose heavy traffic limits select the corresponding product-form invariant laws of the infinite reflected Atlas process. As consequences, we obtain sharp asymptotics for the lowest ranked queues, the system imbalance, and the average queue length. These formulas illustrate the tradeoff between reduced communication cost and load-balancing performance by comparing marginal shortest-queue routing with random routing and full join-the-shortest-queue policies, whose steady-state behavior in this heavy traffic regime is not yet well understood.

\noindent\newline

\noindent \textbf{AMS 2020 subject classifications:} 60K25, 60J60, 60K35, 60H10.\\

\noindent \textbf{Keywords:} Load balancing, heavy traffic, join-the-shortest-queue, diffusion approximations, rank-based diffusion, Atlas model, product-form stationary distributions, metastability, Jackson network.
\end{abstract}

\section{Introduction}
\label{sec:intro}
\subsection{Model}
We consider a large system of parallel queues that is critically loaded and in which jobs arrive to a central dispatcher who then routes these jobs to one of the queues using some load balancing algorithm. 
In recent years, the rapid expansion of large scale service infrastructures, such as cloud computing platforms, distributed data centers, and data storage and retrieval systems \cite{MukherjeeBorstVanLeeuwaardenWhiting2020,JoshiKumari2016,GuptaHarcholBalterSigmanWhitt2007,banbudest}, has led to growing interest in the development and analysis of load balancing schemes for such parallel server systems.  One of the most well studied load balancing algorithms is the join-the-shortest-queue (JSQ) scheme, under which the dispatcher routes each arrival to the current shortest queue in the system. JSQ enjoys many optimality properties (see \cite{MukherjeeBorstVanLeeuwaardenWhiting2020} and references therein). However, for large systems with high throughput, implementing a JSQ policy leads to significant communication costs. To address this, the recent work \cite{banbudest} introduced Marginal JSQ (MJSQ) routing policies in which a vanishingly small fraction of arrivals per unit time are routed using the JSQ policy while the remaining are routed at random (RR). Since RR does not require any knowledge of the state of the system, the communication cost for implementing the MJSQ is significantly lower than that for JSQ.  
One of the central results in \cite{banbudest} shows that, although MJSQ collects the state information far less frequently than JSQ, it nevertheless attains load sharing performance that is asymptotically close to the optimal performance of JSQ.

The paper \cite{banbudest} considered a setting with a fixed number of servers and with the system operating under a normal heavy traffic regime.
In order to describe the precise setting, we introduce a scaling parameter $n \in \NN$ which will govern both the system size and the heavy traffic intensity.
The number of queues is denoted by $k(n)$  and the total arrival rate to the system is $k(n)(n - c(n))$ 
for some functions $k: \NN \to \NN$ and $c: \NN \to \RR_+$, and each of the server processes jobs at rate $n$.
With this notation, \cite{banbudest} considered the setting where $k(n)=K$ for some fixed $K \in \NN$ and $c(n) = v\sqrt{n}/K$ for some $v \in (0,\infty)$.
(The paper \cite{banbudest} also studied the overloaded case where $v<0$, however we will not consider that case here.) Under the standard Markovian assumptions (exponential interarrival and service time distributions that are mutually independent), \cite{banbudest} studied the MJSQ routing policy in which,  $v=\alpha K-\beta$, $\alpha, \beta>0$, and arrivals occur to each queue (independently of others) at rate $n - \alpha\sqrt{n}$, with the modification that the shortest queue gets an additional arrival stream of rate $\beta \sqrt{n}$.
Among other results, the paper \cite{banbudest} shows that the steady state queue-length (averaged over the $K$ queues)  under the above MJSQ policy is approximately 
$$\sqrt{n} \left( \frac{K-1}{K\alpha} + \frac{1}{v}\right).$$
With the same net arrival rate, the corresponding quantity for the policy that routes all jobs at random (RR) is approximately $\sqrt{n}K/v$ and for the (full) JSQ policy it is approximately $\sqrt{n}/v$. Thus, implementing MJSQ leads to an order $K$ improvement over RR, while its performance is close to that achieved by JSQ for large $K$ and $\alpha$. 

We note that, in the above finite server setting with $k(n)=K$, the arrival rate per queue is 
$(n-v\sqrt{n}/K)$ and therefore queues experience a standard heavy traffic intensity of the form $1 -\kappa/\sqrt{n}$ for some fixed slack parameter $\kappa>0$ not depending on $n$.
In the current work, we are interested in a heavy traffic regime that is more extreme than the standard heavy traffic setting considered above. Specifically, we consider the case where the heavy traffic slack parameter $\kappa= v^*/n$, for some $v^*>0$.  Thus, in comparison to \cite{banbudest}, the system experiences a much higher level of congestion. In this regime, for fixed system size, the steady state queue lengths for the shortest queue grow at the order of $n^{3/2}$ under any of the routing policies discussed above. However, one expects that as the system size grows, a MJSQ policy even with a very infrequent exploration of the full state of the system, should lead to significantly smaller queue lengths  for the shortest queue (or more generally for the $d$-shortest queues for fixed $d$). E.g., when $k(n)=n$, one would anticipate the shortest queue to be of order $\sqrt{n}$ while the longest queue to be of the order $n^{3/2}\log n$ (see Corollary \ref{corstat}).    This separation in queue length scales suggests that, even under extreme congestion, the occasional state space exploration inherent in MJSQ  provides a mechanism for providing high quality of service to premium jobs by routing them to to these persistently shorter queues, while maintaining a low communication cost overhead and simultaneously performing load balancing in the overall system.

In order to explore and analyze such  settings of large systems under high congestion we consider the case where $c(n)= v^*/\sqrt{n}$ and $k(n)=n$.   We will consider a MJSQ policy where all except $O(\sqrt{n})$ jobs of the $O(n^2)$ jobs arriving per unit time are routed randomly and the remaining jobs are routed to the shortest queue. Specifically, 
writing $v^* = a-b$ where $a>b$,
in the $n$-th system,  the arrival rate to the shortest queue (where ties in queue length are broken in lexicographical order) will be
$$n - \frac{a}{\sqrt{n}} + b \sqrt{n}.$$
and the arrival rate to the remaining queues will be
$n- \frac{a}{\sqrt{n}}$. Once there, they are served according to the first come first serve (FCFS) scheme.
As before, we assume that the interarrival and service times are exponentially distributed and are mutually independent. We remark that although the shortest queue experiences an  arrival rate which is substantially higher than the rate of service, the overall system is stable since the identity of the shortest queue among the $n$ queues changes dynamically over time.

Let, for $i \in [n]:= \{1, \ldots, n\}$, $X_i^n(t)$ be the length of the $i$-th queue at time $t$ according to some initial labeling of servers  and let $X_{(i)}^n(t)$ be the length of the $i$-th shortest queue at time $t$ (breaking ties with lexicographical ordering).
 Specifically, for $t \ge 0$, let $\pi_t$ be the (random measurable) permutation on $[n]$ such that, for $i,j \in [n]$,
$\pi_t(i) <\pi_t(j)$ if and only if either 
$X^n_{i}(t) < X^n_{j}(t)$ or $X^n_{i}(t) = X^n_{j}(t)$ and $i<j$. Then $X_{(i)}^n(t) = X^n_{\pi_t(i)}(t)$.

For $i=1,2,\ldots n,$ let $Z^n_i(t) := X_{(i)}^n(t)-X_{(i-1)}^{n}(t)$, with $X^n_{(0)}(t):=0$.

\subsection{Summary of main results}
Our first objective is to study the asymptotic transient behavior of the above processes under a diffusion scaling.
For this, we define 
\begin{equation}\label{eq:diffscal}
\hat{X}^n_{i}(t) = \frac{X_{i}^n(t)}{\sqrt{n}}, \hspace{4mm}
\hat{X}^n_{(i)}(t) = \frac{X_{(i)}^n(t)}{\sqrt{n}} \hspace{4mm}\text{and}\hspace{4mm} \hat{Z}_i^n(t) = \frac{Z_i^n(t)}{\sqrt{n}}, \;\; n \in \NN,
\end{equation}
and let $\hat{\boldsymbol{X}}^n_{\uparrow}(t) = (\hat{X}^n_{(1)}(t), \hat{X}^n_{(2)}(t), \ldots , \hat{X}^n_{(n)}(t))$, and
$\hat{\boldsymbol{Z}}^n(t) = (Z_{1}^n(t), Z_{2}^n(t), \ldots, Z_{n}^n(t))$.
By regarding a vector in $\RR_+^n$ as a vector in $\RR_+^{\infty}$ with zeroes in all entries after the $n$-th coordinate, we view $\hat{\boldsymbol{X}}^n_{\uparrow}$ and $\hat{\boldsymbol{Z}}^n$ as  stochastic processes with sample paths in
$D([0,\infty), \RR_+^{\infty})$ where $\RR_+^{\infty}$ is equipped with the usual product topology.
Our first result characterizes the asymptotic behavior of
$\hat{\boldsymbol{X}}^n_{\uparrow}$ and
$\hat{\boldsymbol{Z}}^n$ in $D([0,\infty), \RR_+^{\infty})$ in terms of certain infinite dimensional reflected diffusions ${\boldsymbol{X}}_{\uparrow}$ and ${\boldsymbol{Z}}$. A precise definition of these limit processes will be provided in Section \ref{setupsect}, however, roughly speaking, the process ${\boldsymbol{X}}_{\uparrow}$ can be viewed as an infinite Atlas model with the lowest particle reflected at zero, while the process ${\boldsymbol{Z}}$ gives the sequence of gaps associated with this process.
Both the finite and infinite Atlas models, without reflection at zero, arise naturally as canonical diffusion limits of rank-based particle systems. They have been studied extensively (see, e.g., \cite{fernholz2009stochastic, pitmanpal, Sarantsev2017InfiniteSystems}) and they admit explicit, product-form stationary distributions \cite{pitmanpal,SarTsai}, with well-studied domains of attraction \cite{DemboJaraOlla2019,BanerjeeBudhiraja2022}.
We establish the wellposedness of reflected analogues of these infinite dimensional diffusion processes that arise in our analysis (see Theorem \ref{uniquenessthm}) and prove the convergence of processes $\hat{\boldsymbol{X}}^n_{\uparrow}$ and
$\hat{\boldsymbol{Z}}^n$ in distribution, in $D([0,\infty), \RR_+^{\infty})$, to ${\boldsymbol{X}}_{\uparrow}$ and
${\boldsymbol{Z}}$ (with suitable initial conditions), respectively (see Theorem \ref{convergencethm}).
We remark here that, with our choice of topology on $\RR^{\infty},$ the above result gives the convergence 
$$(\hat{X}^n_{(1)}(\cdot), \hat{X}^n_{(2)}(\cdot), \ldots , \hat{X}^n_{(d)}(\cdot)) \Rightarrow (\hat{X}_{(1)}(\cdot), \hat{X}_{(2)}(\cdot), \ldots , \hat{X}_{(d)}(\cdot))$$
in $D([0,\infty), \RR_+^{d})$ for any $d \in \NN$ (and similarly for $\hat{\boldsymbol{Z}}^n$).
However, from this one cannot deduce the convergence of quantities such as the average queue-lengths: $\frac{1}{n}\sum_{i=1}^n \hat{X}^n_{(i)}(t)$ which in fact will not typically be tight under the scaling considered above.

We note that the processes $\hat{\boldsymbol{X}}^n_{\uparrow}$ and
$\hat{\boldsymbol{Z}}^n$ are characterized in terms of two key parameters: $(a, b)$. However, as is seen from the description of the limit processes ${\boldsymbol{X}}_{\uparrow}$ and ${\boldsymbol{Z}}$ in Section \ref{setupsect}, the probability laws of these  latter processes, although depending on $b$, are invariant under the choice of the parameter $a$.  This is not surprising, as the system is much closer to criticality than in a standard heavy traffic regime. Indeed, consider the basic example of a M/M/1 queue under heavy traffic with arrival rate of $n-\alpha(n)$ and service rate of $n$. Then, if $\alpha(n) = v\sqrt{n}$ for some $v \in \RR$, the queue-length process under the diffusion scaling converges to a reflected Brownian motion with drift $v$, but if $\alpha(n) = o(\sqrt{n})$, the limit diffusion is a reflected Brownian motion with $0$ drift and thus does not depend on the particular choice of $\alpha(n)$.  However, in striking contrast with the latter M/M/1 queue setting, where the limiting diffusion does not admit a stationary distribution, the limiting reflected Atlas model for the many-server system considered here admits a rich family of stationary distributions.

One of the reasons to consider diffusion approximations for queuing systems is to gain information about the steady state behavior of the system under a diffusion scaling.  Indeed, one of the main results of \cite{banbudest} shows that the reflected diffusion  studied in that work gives not only a good transient approximation for the ranked queue-length processes but also a good approximation over long time horizons. Specifically, it is proven that the steady state distribution of the ranked queue-length processes, under a diffusion scaling, converge to that of the approximating diffusion.  In order to establish similar connections for the large scale parallel server systems considered in the current work, it is natural to first study the long time properties of the limit process ${\boldsymbol{X}}_{\uparrow}$, or equivalently, that of ${\boldsymbol{Z}}$.  In Theorem \ref{stationarydistreflectedatlasresult} we show that, unlike the finite dimensional diffusion that arises in the analysis of parallel service systems in \cite{banbudest} which has a unique stationary distribution, the infinite dimensional diffusion describing ${\boldsymbol{Z}}$ has {\em many} stationary distributions. In fact, this theorem provides a one parameter family of stationary distributions $\{\mu_{a}, a \in (b,\infty)\}$ given as products of exponential laws.  The use of the notation $a$ to denote the parameter is intentional: the stationary distribution $\mu_a$ is related to the diffusion scaled process $\hat{\boldsymbol{Z}}^n$ that is associated with the parameters $(a,b)$ in a precise sense which will be described below.  Roughly speaking, the dependence on $a$, which, as noted above, vanishes in the transient asymptotic analysis of the process $\hat{\boldsymbol{Z}}^n$, reveals itself when one considers the long time behavior of this process under a diffusion scaling through a particular selection of a stationary distribution of the limit diffusion ${\boldsymbol{Z}}$. We remark that the stationary distribution $\mu_a$ also depends on the parameter $b$ (which corresponds to the fact that the distribution of ${\boldsymbol{Z}}$ depends on $b$) and to make this dependence explicit we will occasionally write $\mu_a$ as $\mu_{a,b}$.

We now give a precise description of the connection between the diffusion scaled pre-limit processes associated with parameters $(a,b)$ and the stationary distribution $\mu_{a,b}$ of the limit diffusion $\hat{\boldsymbol{Z}}$.  One of the challenges in the analysis of the dynamics of the ranked queue-length processes $\hat{\boldsymbol{X}}^n_{\uparrow}$ is to understand the evolution of the system when there are ties. With the ordering permutation map $\pi_t$ as before, suppose that immediately before a jump at time instant $t$, the state of the ranked queues takes the form
\begin{equation}\label{eq:tie}
X_{\pi_{t-}(i-1)}^{n}(t-) <X_{\pi_{t-}(i)}^{n}(t-)=X_{\pi_{t-}(i+1)}^{n}(t-)= \cdots =X_{\pi_{t-}(i+j)}^{n}(t-)< X_{\pi_{t-}(i+j+1)}^{n}(t-).
\end{equation}
Namely, there is a tie of length $j+1$ for the ranks $i, i+1, \ldots i+j$.
Under the original system considered in our work, an arrival at instant $t$ to one of the queues $\pi_{t-}(i), \ldots , \pi_{t-}(i+j)$ is credited to queue $\pi_{t}(i+j)$ whereas a departure from one of the queues $\pi_{t-}(i), \ldots , \pi_{t-}(i+j)$ is debited from queue $\pi_{t}(i)$. We remark that, in the heavy traffic regime that we consider, the Lebesgue measure of the amount of time any two queues are tied converges to $0$ in the limit (cf. \cite[Lemma 4.2]{banbudest}). In view of this negligibility, we consider a slightly different model that is easier to analyze in terms of the long time behavior. In this model, which we refer to as the {\em system with pauses}, when the system state prior to a jump at time instant $t$ takes the form in \eqref{eq:tie}, all arrivals to queues $\pi_{t-}(i), \ldots , \pi_{t-}(i+j-1)$ are paused and all servers for queues $\pi_{t-}(i+1), \ldots , \pi_{t-}(i+j)$ pause processing of jobs; rest of the system evolution remains the same. We note that, due to the slowdown of activity because of the pauses,  ties break more slowly in the modified system than in the original system.  As noted above, the time instants when the queues are tied is in a certain sense negligible. In fact, we show in Theorem \ref{convergencethm2} that with this modified dynamics, the processes corresponding to $\hat{\boldsymbol{X}}^n_{\uparrow}$ and
$\hat{\boldsymbol{Z}}^n,$ denoted $\hat{\boldsymbol{Y}}^n$ and $\hat{\boldsymbol{U}}^n$, respectively, have the same weak limits in $D([0,\infty), \RR_+^{\infty})$ as for the original dynamics.  The main reason for considering the modified dynamics is that we are able to show that there is a direct correspondence between the process $\hat{\boldsymbol{U}}^n$ (under the modified dynamics) and a certain Jackson network viewed under a diffusive scaling (see Section \ref{convergenceofstationariessect}). Using classical results for Jackson networks \cite{Kelly1979}, we then provide an explicit formula for the stationary distribution for $\hat{\boldsymbol{U}}^n$ and furthermore show that this stationary distribution converges to the invariant measure $\mu_{a,b}$ of the infinite dimensional diffusion ${\boldsymbol{Z}}$
(see Corollary \ref{jackcor}).
This result provides a precise connection between the long time behavior of the queuing system with parameters $(a,b)$ under a diffusive scaling and the stationary distribution $\mu_{a,b}$ of the limit diffusion ${\boldsymbol{Z}}$. As a second such connection, we show in Corollary \ref{cor: mix} that
the process
$\hat{\boldsymbol{Z}}^n$, under the original dynamics, when initiated with the stationary distribution for the modified dynamics of  $\hat{\boldsymbol{U}}^n$, has an {\em approximate stationarity} property, namely, its law remains approximately unchanged over compact time intervals, and the process converges to the stationary gap process of the reflected infinite Atlas model with invariant law  $\mu_{a,b}$. This result, together with the fact that the limit diffusion has multiple stationary distributions, also suggests a certain  metastability phenomenon which we hope to explore in future work; see Section \ref{sec:future}. In Corollary \ref{corstat}, we present asymptotics for the lowest ranked queues, system imbalance (discrepancy between the maximum and minimum queuelengths) and average queuelength. Using these, we compare the performance of MJSQ with random routing (RR) and join-the-shortest-queue (JSQ) in Remark \ref{rem:qual}. When $a-b \ll a$, we find that MJSQ does significantly better in load-balancing and reducing the average queuelength in comparison to RR, while keeping the shortest queue of a similar length despite a much larger  stream of arrivals routed to it. JSQ is expected to perform substantially better in terms of average queue length, since it explores the state space much more aggressively, albeit at the cost of higher communication overhead. However, in the extreme heavy-traffic regime considered here, the steady-state behavior of JSQ is not well understood (see \cite{raj2024exponential} for progress in this direction), and therefore a precise rigorous comparison is currently unavailable; see however Remark \ref{rem:qual}.

From \cite{banbudest} it follows that the Markov process $\hat{\boldsymbol{Z}}^n$ also has a unique stationary distribution.
One may ask whether, as for $\hat{\boldsymbol{U}}^n$, the stationary distribution of $\hat{\boldsymbol{Z}}^n$ also converges 
to $\mu_{a,b}$. Proving such a result presents substantial challenges; see next section and Remark \ref{rem:compare}, however in Corollary \ref{cor:mjsq}, we prove an analogous result for systems with a fixed number of servers. Namely, we show that the heavy traffic limits of the stationary gap distributions for the original system and the system with pauses, when the number of servers is fixed, coincide, thereby providing a justification for  using the latter system as a proxy for the behavior of the original one on the infinite time horizon.

\subsection{Future directions}
\label{sec:future}
In future work, we aim to establish a result of the form in Corollary \ref{cor:mjsq} when $k(n)=n$, over the infinite time horizon. 
Namely, we would like to show that the stationary distributions of the system with pauses and the original system asymptotically coincide in the case where $k(n)=n$.
This involves substantial work as it requires a refined control on mixing times and rates of convergence to equilibrium for the two high dimensional Markov processes, and estimating the time spent by each system at ties; see Remark \ref{rem:compare}. We will also expect to see a \emph{metastability} behavior, under which the system with parameters $a,b$, started sufficiently close (in an appropriate sense) to $\mu_{a',b}$ for some $a' \neq a$, should exhibit an approximate stationarity property over long time intervals (growing with $n$) before approaching  its unique invariant distribution that approximates $\mu_{a,b}$. This type of behavior is plausible since $\mu_{a',b}$ is also invariant for the limit reflected Atlas model, which is agnostic to the choice of $a$.

We also plan to study interpolations between the JSQ and MJSQ policies by tuning up the routing probability of incoming jobs to the shortest queue under the MJSQ scheme. This will provide a flexible framework for balancing performance and communication costs, while also offering a mathematically tractable approach to heavy-traffic regimes for JSQ that are currently not fully well understood (see Remark \ref{rem:qual}).


\subsection{Related work}

Load balancing policies for parallel-server systems have attracted much interest in recent years.
The much-studied JSQ policy is optimal in many respects, including vanishing mean waiting times in both the fluid and diffusion regimes \cite{MukherjeeBorstVanLeeuwaardenWhiting2020}.
However, it comes at a significant communication cost because it requires the router to query each server in the system every time it routes an incoming job.
To address this, Power-of-Choice, or JSQ(d), algorithms have received significant interest.
In the JSQ(d) load balancing scheme, when a job arrives to the system, the router queries $d \in \N$ queues chosen at random. Then, it routes the job to the shortest of those $d$ queues.
It is known that, for a parallel server system in heavy traffic with a fixed number of servers $K$, diffusion limit of JSQ(d) for any $d \in \{2,3,...,K\}$ is the same. Namely, JSQ(2) performs as well as JSQ in the fixed-server diffusion limit. 
In contrast, the many-server diffusion limit is highly sensitive to the value of the choice parameter $d$. Specifically, consider a sequence of $n$-server systems operating under the JSQ$(d(n))$ load-balancing policy. In order for the diffusion limit to coincide with that of the full JSQ policy, it is necessary that $\frac{d(n)}{\sqrt{n}\log n}\to\infty$ as $n\to\infty$; see \cite{MukherjeeBorstVanLeeuwaardenWhiting2020}. If $d(n)$ grows more slowly, then one observes a broad range of distinct diffusive asymptotic regimes \cite{bhamidi2022near}, in sharp contrast with the behavior under full JSQ.

Other load balancing policies that have smaller communication costs and similar optimality properties to JSQ include Join the Idle Queue, in which, instead of querying some number of queues and sending a job to the shortest queue, the router remembers which queues are idle, and sends jobs to idle queues first \cite{MukherjeeBorstvanLeeuwaardenWhiting2016}.
Also recall that, in the fixed-number-of-servers setting, the MJSQ policy studied in this paper was introduced in \cite{banbudest}, where it was shown to enjoy optimality properties similar to those of JSQ, while incurring substantially lower communication costs.

Here we considered a Markovian setting where interarrival and service times are exponentially distributed, but the harder setting of general  distributions has also been studied in the literature \cite{bramson2012asymptotic, atar2025rank}.

A closely related many-server randomized load balancing system, with a Poisson stream of arrivals but general service time distribution, was recently studied in \cite{atar2025invariance}.  In their model, a small fraction of arrivals is
routed using a Power-of-Choice rule, while the remaining jobs are routed uniformly at random.
Under a critical scaling, the authors obtain a hydrodynamic limit for the empirical distribution of
diffusion-scaled queuelengths and invariance principles leading to reflected McKean--Vlasov
diffusions.

\subsection{Notation}
The following notation will be used. For $n \in \NN$, we denote
$[n]:= \{1, 2, \ldots, n\}$. For a Polish space $S$, $D([0,\infty), S)$ will denote the space of right continuous functions with left limits, from $[0,\infty)$ to $S$, equipped with the usual Skorohod topology. Similarly, we denote by $C([0,\infty), S)$, the space of continuous functions from $[0,\infty)$ to $S$, equipped with the local uniform topology.

\section{Setting and Main Results}
\label{setupsect}
We consider a sequence of parallel server systems indexed by $n$. In the $n$-th system, there are $n$ parallel servers, the arrival rate to the shortest queue (where ties in queue length are broken in lexicographical order) is
$n - \frac{a}{\sqrt{n}} + b \sqrt{n},$
and the arrival rate to the remaining queues will be
$n- \frac{a}{\sqrt{n}}$. Once there, they are served according to the first come first serve (FCFS) scheme with each server processing jobs at rate $n$.
We assume that the interarrival and service times are exponentially distributed and are mutually independent.
In order to describe the state evolution of the system, we consider a collection $(\cla_i, \cld_i,  i \in [n])$ of mutually independent Poisson random measures (PRM) on $\RR_+\times \RR_+$ with intensity  as the Lebesgue measure $dt\times dr$ on $\RR_+\times \RR_+$.
We can give a representation for the queue-length process $\{X^n_i, i \in [n]\}$ in terms of these PRM as follows.  Suppose that the initial queue-length vector is given as $\{x^n_i, i \in [n]\}$, i.e. $X^n_i(0) = x^n_i$, $i \in [n]$. We assume without loss of generality that the initial vector is ordered, namely $x^n_1 \le \cdots \le x^n_n$. Then
\begin{equation}
\label{eq:xnit}
X^n_i(t) = x^n_i + A^n_i(t)  - D^n_i(t), \; i \in [n], t\ge 0,
\end{equation}
where
\begin{align*}
A^n_i(t) &= \int_{[0,t]\times \RR_+} 1_{[0, n - an^{-1/2} + bn^{1/2}1_{\{X^n_i(s-) = X^n_{(1)}(s-)\}}]}(r) \cla_i(ds\, dr), \; i \in [n]\\
D^n_i(t) &= \int_{[0,t]\times \RR_+} 1_{[0, n]}(r) 1_{\{X^n_i(s-) \neq 0\}} \cld_i(ds\, dr), \; i \in [n]\\
\end{align*}
where  $X_{(i)}^n(t)$ is the length of the $i$-th shortest queue at time $t$ (breaking ties with lexicographical ordering).
As before, for $i\in [n]$, $Z^n_i:= X_{(i)}^n-X_{(i-1)}^{n}$, with $X^n_{(0)}:=0$.
Diffusion scaled processes $\hat X^n_i, \hat X^n_{(i)}, \hat Z^n_i$ are defined by \eqref{eq:diffscal}. 

We consider stochastic processes $\hat{\boldsymbol{X}}^n_{\uparrow}$ and
$\hat{\boldsymbol{Z}}^n$  with sample paths in $D([0,\infty), \RR_+^{\infty})$
as
$$\hat{\boldsymbol{X}}^n_{\uparrow}(t) = (\hat X^n_{(1)}(t), \ldots , \hat X^n_{(n)}(t), 0, \ldots),$$
$$\hat{\boldsymbol{Z}}^n(t) = (\hat Z^n_1(t), \ldots , \hat Z^n_n(t), 0, \ldots).$$
Our first result will show that the processes $\hat{\boldsymbol{X}}^n_{\uparrow}$ and $\hat{\boldsymbol{Z}}^n$ converge, in distribution, in  $D([0,\infty), \RR_+^{\infty})$, to certain infinite dimensional reflected diffusions.
We now present these candidate limit processes.

We begin by introducing the {\em reflected infinite Atlas} process that will be used to describe the limit processes. Consider a process with sample paths in $C([0,\infty), \RR_+^{\infty})$, denoted as $\boldsymbol{X}= (X_1, X_2, \ldots)$, and described by stochastic differential equations of the form
\begin{equation}\label{eq:unranksde}
            dX_i = 1(X_i=X_{(1)})\delta dt +\sqrt{2} dB_i + dL_i
        \end{equation}
        where $\delta \in \RR$, $\{B_i(\cdot): i \in \N\}$ are a family of independent Brownian motions, $L_i$ is the local time process for $X_i$ at $0$, and $X_{(1)}(t) = \inf_{i \in \NN} X_i(t)$. A precise
        meaning to the solution of \eqref{eq:unranksde} is given in a weak sense as follows. By a filtered probability space 
        $\Xi = (\Om, \clf, \PP, \{\clf_t\}_{t\ge 0})$ we mean a probability space $(\Om, \clf, \PP)$ equipped with a filtration
        $ \{\clf_t\}_{t\ge 0}$ satisfying the usual conditions. By a collection $\{B_i, i \in \NN\}$ of Brownian motions on this space we mean that $\{B_i, i \in \NN\}$ are mutually independent one dimensional standard Brownian motions and each $B_i$ is a $\clf_t$-martingale.
        A vector $\boldsymbol{x}= (x_1, x_2, \ldots) \in \RR_+^{\infty}$ is said to be {\em rankable} if there exists a bijection $\pi:\NN \to \NN$ such that $x_{\pi(1)} \le x_{\pi(2)} \le \cdots$.

\begin{definition}
\label{def: reflected infinite atlas model}
    Fix $\mu \in \clp(\RR_+^{\infty})$. By a weak solution of \eqref{eq:unranksde} with initial distribution $\mu$, we mean a $\clf_t$-adapted stochastic process $\boldsymbol{X}= (X_1, X_2, \ldots)$ with sample paths in $C([0,\infty), \RR_+^{\infty})$, given on some filtered probability space $\Xi = (\Om, \clf, \PP, \{\clf_t\}_{t\ge 0})$ on which we are given a collection $\{B_i, i \in \NN\}$ of Brownian motions such that
    \begin{enumerate}
        \item The initial vector $\boldsymbol{X}(0):= (X_1(0),X_2(0),...)$ has probability  distribution $\mu$.
        \item for every $t\geq 0,$ the vector $\boldsymbol{X}(t):=(X_1(t),X_2(t),...)$ is rankable almost surely, with a (jointly) measurable bijection map $\pi_t$.
        \item  the process $X_i$ satisfies the SDE
       \eqref{eq:unranksde}, namely with $X_{(1)}(t) := X_{\pi_t(1)}(t)$,
       $$X_i(t) =  X_i(0) + \delta\int_0^t 1_{\{X_i(s)=X_{(1)}(s)\}} ds +\sqrt{2}B_i(t) + L_i(t), \; t \ge 0, \; i \in \NN,$$
       where $L_i$ is  a continuous nondecreasing $\clf_t$-adapted process satisfying
       $L_i(0) =0$ and $$\int_{\RR_+} 1_{\{X_i(t)>0\}} dL_i(t)=0.$$
        \end{enumerate}
        We say that weak uniqueness holds for \eqref{eq:unranksde} with initial distribution $\mu$ if whenever $\boldsymbol{X}$ and $\tilde{\boldsymbol{X}}$ are two weak solutions of \eqref{eq:unranksde} with initial distribution $\mu$, then the probability laws of 
        $\boldsymbol{X}$ and $\tilde{\boldsymbol{X}}$ on $C([0,\infty), \RR_+^{\infty})$ are the same.
\end{definition}

Our first result shows the wellposedness (weak existence and uniqueness of solutions) of \eqref{eq:unranksde} for suitable initial conditions.
Following \cite{pitmanpal, Sarantsev2017InfiniteSystems} we consider initial configurations $\boldsymbol{x} = (x_1, x_2, \ldots) \in \RR_+^{\infty}$ that satisfy $x_1 \le x_2 \le \cdots$ and
$\sum_{i=1}^{\infty} \exp(-\alpha x_i^2) <\infty$ for all $\alpha>0$. 
We will denote the space of all such configurations by $\Lambda_0$. We denote by $\clp(\Lambda_0)$ as the space of all probability measures on $\Lambda_0$.  Theorem \ref{uniquenessthm} below gives  wellposedness of \eqref{eq:unranksde} for $\mu \in \clp(\Lambda_0)$.

\begin{theorem}
\label{uniquenessthm}
Let $\mu \in \clp(\Lambda_0)$.
Then there exists a weak solution to \eqref{eq:unranksde} with initial distribution $\mu$ and weak uniqueness holds with this initial distribution.
\end{theorem}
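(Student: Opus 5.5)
Existence and uniqueness for \eqref{eq:unranksde} will be obtained by adapting the standard approach for infinite rank-based systems \cite{Sarantsev2017InfiniteSystems} to the case with reflection at the origin. The key structural point is that the drift $\delta 1(X_i = X_{(1)})$ acts only on the lowest particle. Reflection at $0$ is also a one-sided push from below and never reorders the particles. So the system is a rank-based diffusion with finitely many nonzero drift coefficients, $g_1=\delta$ and $g_k=0$ for $k\ge 2$, plus a common Skorokhod reflection at $0$.

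\emph{Existence.} For $N\in\NN$ I consider the finite system of $N$ particles, with drift $\delta$ on the lowest of them, independent Brownian motions $\sqrt2 B_i$, and each particle reflected at $0$. This finite reflected rank-based system is well posed as a finite-dimensional SDE with bounded measurable drift and normal reflection on the orthant (Girsanov from reflected Brownian motions on $\RR_+^N$). For $\mu\in\clp(\Lambda_0)$ I start the $N$-system from the lowest $N$ coordinates and let $N\to\infty$. Tightness in $C([0,T],\RR_+^\infty)$ of each fixed coordinate is immediate, since $X_i - L_i$ is Brownian plus bounded drift and $L_i$ is controlled by the Skorokhod map.

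The main difficulty is showing that in the limit the $k$-th ranked particle of the $N$-system is determined by finitely many particles, uniformly in $N$, and that rankability persists. I plan to use the comparison technique of Sarantsev (also \cite{pitmanpal}). A particle starting at $x_i$ is bounded below, up to time $T$, by $x_i - |\delta| T + \sqrt 2 \inf_{s\le T} B_i(s)$, because the drift is bounded and the reflection only pushes upward. The condition $\sum_i e^{-\alpha x_i^2}<\infty$ for all $\alpha$, combined with Gaussian tails and Borel--Cantelli, then shows that only finitely many particles ever enter $[0,M]$ on $[0,T]$. This yields the following:
\begin{itemize}
\item rankability of $\boldsymbol{X}(t)$;
\item the identification of $X_{(1)}$ as a finite minimum on compact time intervals;
\item passage to the limit in the drift term $\int 1\{X_i=X_{(1)}\}ds$, using the fact that the time at which two particles coincide has Lebesgue measure zero (via occupation density of the differences);
\item passage to the limit in the local time terms, by continuity of the Skorokhod map.
\end{itemize}

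\emph{Uniqueness.} I remove the drift by a Girsanov change of measure localized to finitely many particles. On $[0,T]$, define $\frac{d\QQ}{d\PP}$ through the exponential martingale of $-\frac{\delta}{\sqrt2}\sum_i\int 1\{X_i=X_{(1)}\}dB_i$. This martingale has quadratic variation $\delta^2 T/2$ because exactly one index is lowest at a.e.\ time, so Novikov holds. Under $\QQ$ the coordinates are independent reflected Brownian motions, which are pathwise unique, so their law is determined by $\mu$. The Radon--Nikodym derivative is a measurable functional of the path $\boldsymbol{X}$. Indeed, $\int 1\{X_i=X_{(1)}\}dB_i$ can be rewritten through $dX_i - dL_i - \delta 1\{\cdot\}ds$, and $L_i$ is a functional of $X_i$. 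Hence the law under $\PP$ is determined, giving weak uniqueness. I expect the main obstacle to be the tail-control and rankability argument in the existence step, in particular ensuring that the lowest particle is attained by one of finitely many indices uniformly along the approximating sequence.
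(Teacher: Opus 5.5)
Your uniqueness argument is essentially the paper's. You tilt by the exponential of $-\frac{\delta}{\sqrt2}\sum_i\int 1\{X_i=X_{(1)}\}\,dB_i$, whose bracket is the deterministic $\delta^2t/2$ because ties are Lebesgue-null. Under the new measure the coordinates become independent reflected Brownian motions, and you then express the inverse density as a functional of the path.

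For existence you take a genuinely different route. The paper simply runs the same change of measure in reverse. It starts from independent reflected Brownian motions $X_i=\Gamma(X_i(0)+\sqrt2 B_i)$ and proves rankability with the same Gaussian-tail/Borel--Cantelli estimate you use. It then reweights by $D_t=\exp(\delta N_t-\delta^2t/4)$, and Girsanov directly yields a weak solution, with no tightness and no limit identification.

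Your $N$-particle approximation is valid but heavier. To identify the limiting drift $\int 1\{X_i=X_{(1)}\}\,ds$ you need two ingredients, in order:
\begin{enumerate}
\item First show that the limiting $B_i$ are Brownian motions for a common filtration. Only then does $X_i-X_j$ have bracket $4t$, which gives null ties.
\item Then use your uniform-in-$N$ lower bound, which controls which particles can approach the minimum. This gives a.e.-$s$ convergence of $1\{X^N_i=X^N_{(1)}\}$ under a Skorokhod representation.
\end{enumerate}
Null ties alone do not make this discontinuous functional pass to the limit in the product topology.

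In exchange, your route has a real byproduct. Combined with uniqueness, it shows that the $N$-particle systems started from the first $N$ coordinates of $\mu$ converge to the infinite system. That is exactly the content of Proposition \ref{prop:434}, which the paper obtains separately via Sarantsev's strong approximative solutions and monotonicity.

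If you only want Theorem \ref{uniquenessthm}, your own finite-energy Girsanov observation already gives existence more cheaply by reversing it.
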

Proof of the theorem is given in Section \ref{sec:thm1pf}.
\begin{remark}\label{rem:weakn}
We will need to also consider a finite dimensional analogue of \eqref{eq:unranksde}, namely for $N\in \NN$, we will consider a $\RR_+^N$ valued continuous process $\bs{X}^{(N)} = (X_1^{(N)}, \ldots , X_N^{(N)})$ which is a weak solution of \eqref{eq:unranksde} (with $i \in [N]$) with some initial distribution $\mu^{(N)} \in \clp(\RR_+^N)$. The weak solution (and its uniqueness) are defined exactly as in Definition \ref{def: reflected infinite atlas model} with $\RR_+^{\infty}$ and infinite vectors replaced by $\RR_+^N$ and $N$-dimensional vectors (for this setting part 2 of Definition \ref{def: reflected infinite atlas model} holds always). The proof of existence and uniqueness of weak solutions in this finite dimensional case (in this case no requirement is needed on the initial distribution) is proved exactly as Theorem \ref{uniquenessthm} and we omit the details.
\end{remark}

Let $\boldsymbol{X}$ be a weak solution to \eqref{eq:unranksde}, with some initial distribution $\mu \in \clp(\Lambda_0)$ given on some filtered probability space with Brownian motions $\{B_i\}$.
Since, by definition of a weak solution, $\boldsymbol{X}(t)=(X_1(t),X_2(t),...)$ is rankable almost surely, we can define
$$X_{(i)}(t) := X_{\pi_t(i)}(t), \; i \in \NN, t \ge 0.
$$
Let
$$Z_i(t) := X_{(i)}(t) - X_{(i-1)}(t), \; i \in \NN, \; t \ge 0$$
where we set $X_{(0)}(t)=0$ for all $t\ge 0$.
We denote
\begin{equation}\label{eq:hatxhatz}
{\boldsymbol{X}}_{\uparrow} := (X_{(1)}, X_{(2)}, \ldots), \;
{\boldsymbol{Z}} := (Z_1, Z_2, \ldots).
\end{equation}
We will refer to $\bs{X}$, $\bs{X}_{\uparrow}$, $\bs{Z}$ as the reflected Atlas system, the ranked reflected Atlas system, and the gap process for the reflected Atlas system, respectively.

Our next result characterizes the limits of $\hat{\boldsymbol{X}}^n_{\uparrow}$ and
$\hat{\boldsymbol{Z}}^n$ in terms of ${\boldsymbol{X}}_{\uparrow}, {\boldsymbol{Z}}$.
\begin{theorem}
\label{convergencethm}
Suppose that the initial queue-length vector $\{X^n_i(0), i \in [n]\}$ satisfies
$X^n_1(0) \le X^n_2(0) \le \cdots \le X^n_n(0)$ and the distribution of
$\hat{\boldsymbol{X}}^n(0) = (X^n_1(0)/\sqrt{n}, \ldots,  X^n_n(0)/\sqrt{n}, 0, \ldots)$ converges in $\clp(\RR_+^{\infty})$ to
some $\mu \in \clp(\Lambda_0)$.
Furthermore, suppose that
\begin{equation}\label{condnn}
\limsup_{N\to \infty} \limsup_{n\to \infty} \mathbb{P}\left(\sum_{i=N}^{n} e^{-c\hat X^n_i(0)} > \delta\right) = 0 \mbox{ for all } c >0, \delta>0.
\end{equation}
Let $\boldsymbol{X}$  be the weak solution of  \eqref{eq:unranksde} with initial distribution $\mu$ and with $\delta=b$. Define ${\boldsymbol{X}}_{\uparrow}$ and ${\boldsymbol{Z}}$ through
\eqref{eq:hatxhatz}.
Then, as $n\to \infty$,
$(\hat{\boldsymbol{X}}^n_{\uparrow}, \hat{\boldsymbol{Z}}^n)$ converges in distribution, in $D([0,\infty), \RR_+^{\infty}\times \RR_+^{\infty})$, to $({\boldsymbol{X}}_{\uparrow}, {\boldsymbol{Z}})$.
\end{theorem}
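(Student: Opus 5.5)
The strategy is to reduce the infinite-dimensional convergence to finite-dimensional convergence of the lowest $N$ ranked queues, and then control the influence of the higher ranks through the tail condition \eqref{condnn}.

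\textbf{Step 1: semimartingale decomposition.} I will write the ranked process through a semimartingale representation in rank space. For each rank $k$, the Poisson martingale decompositions of $A^n_i,D^n_i$ give
\begin{equation*}
\hat X^n_{(k)}(t)=\hat X^n_{(k)}(0)+b\int_0^t 1_{\{k=1\}}\,ds-\frac{a}{n}t+\hat M^n_k(t)+\hat L^n_k(t)+\hat\Gamma^n_k(t).
\end{equation*}
Here $\hat M^n_k$ is a martingale with predictable quadratic variation $(2n\cdot t + O(\sqrt n))/n\to 2t$. The term $\hat L^n_k$ is the idle-time regulator $\frac{1}{\sqrt n}\int 1_{\{X^n_{(k)}=0\}}\,n\,ds$ (a scaled local time at $0$). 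The term $\hat\Gamma^n_k$ collects the corrections coming from ties; these are nondecreasing/nonincreasing pushes at collisions between ranks $k-1,k,k+1$. Note that the drift $-a t/n$ vanishes, which is why $a$ disappears from the limit. Moreover, the $b\sqrt n$ stream to the shortest queue gives drift $b$ exactly to rank $1$.

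\textbf{Step 2: tightness of finite blocks.} For fixed $N$, I will show that $(\hat X^n_{(1)},\dots,\hat X^n_{(N)})$ is $C$-tight. Jumps are of size $n^{-1/2}$, the martingale parts are tight by the martingale FCLT, and the regulators $\hat L^n,\hat\Gamma^n$ are controlled by Skorohod-map/oscillation inequalities for the reflected rank dynamics.

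The key difficulty is that rank $N$ interacts with rank $N+1$. I handle this by a comparison argument. Let $\tau$ be the first time some queue initially of rank $>N'$ (with $N'\gg N$) descends into the lowest $N$ ranks. Before $\tau$, the lowest $N$ ranks are functions of the queues initially in $[N']$ only. The tail condition \eqref{condnn}, combined with a union bound, controls $\PP(\tau\le T)$. I estimate $\PP(\text{queue } i \text{ drops by } \hat X^n_i(0)-y \text{ by time } T)\lesssim e^{-c(\hat X^n_i(0)-y)^2}$, which is valid because each queue dominates a random-walk whose drift is $O(n^{-1/2})$ per unit diffusive time (the $b\sqrt n$ extra arrivals only push queues up). Summing over $i>N'$ and using \eqref{condnn} (exponential tails imply the needed Gaussian-type tails) shows $\PP(\tau\le T)\to0$ as $N'\to\infty$ uniformly in $n$. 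The same estimate is used for the limit process via the $\Lambda_0$ condition, as in \cite{Sarantsev2017InfiniteSystems}.

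\textbf{Step 3: identification of the limit.} On the event $\{\tau>T\}$, the first $N'$ unranked queues (relabelled) evolve as an $N'$-particle system. Its limit points solve the finite-dimensional reflected Atlas SDE of Remark \ref{rem:weakn} with $\delta=b$. To show this, I will use the following facts:
\begin{enumerate}[(i)]
\item The martingales converge to independent $\sqrt2 B_i$, since arrival and departure PRMs of distinct queues are independent.
\item The regulators become local times at $0$, by the usual complementarity.
\item The time spent at ties is asymptotically negligible, as in \cite[Lemma 4.2]{banbudest}. This ensures that the indicator $1_{\{X_i=X_{(1)}\}}$ converges and that the tie-breaking conventions do not matter.
\end{enumerate}
The weak uniqueness from Remark \ref{rem:weakn} then identifies the limit of each block. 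Letting $N'\to\infty$ and using the coupling of the finite systems with the infinite one (the construction underlying Theorem \ref{uniquenessthm}, where the lowest $N$ ranks of the $N'$-system agree with those of $\bs X$ with probability $\to1$), I will obtain convergence of $\hat{\bs X}^n_\uparrow$ in the product topology. The convergence of $\hat{\bs Z}^n$ then follows by the continuous mapping of differences.

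\textbf{Main obstacle.} I expect the main obstacle to be Step 3(iii): the negligibility of tie time, together with the fact that the discontinuous indicator drift $1_{\{X_i=X_{(1)}\}}$ passes to the limit. I would first try to bound the occupation time at ties by using the quadratic variation of the gaps $\hat Z^n_k$ and a Tanaka-type formula, showing $\int_0^T 1_{\{\hat Z^n_k(s)\le\epsilon\}}ds\to0$ as $\epsilon\to0$ uniformly in $n$. This relies on the gaps having diffusivity bounded below, namely variance $4$ from two independent queues.
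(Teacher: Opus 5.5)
Your proposal is correct and has the same architecture as the paper. It truncates to the first $N'$ queues with the error controlled by \eqref{condnn} (Lemma \ref{comparisonlemmaR}, with a cosmetically different stopping time). It then takes a fixed-$N'$ heavy traffic limit (Proposition \ref{prop:350}) and finally an $N'\to\infty$ limit for the reflected Atlas model (Proposition \ref{prop:434}).

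The differences lie in the last two pieces. The paper imports the fixed-$N'$ limit from \cite[Theorem 1]{banbudest}, where the gaps are an obliquely reflected Brownian motion. It gets the $N'\to\infty$ limit from Sarantsev's monotone strong approximative solutions of \eqref{eq:344}. That route needs his comparison results extended to the reflected case, plus an identification of laws. You instead prove the fixed-$N'$ limit directly in unranked coordinates, and pass $N'\to\infty$ through the Girsanov construction of Theorem \ref{uniquenessthm}.

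That last step is shorter and gives total variation convergence. You should state it as such, not as ``agreement with probability tending to one'', since the two systems live under different measures. Let $\sigma$ be the first time a coordinate of index $>N'$ reaches $\max_{i\le N}X_i$. The densities $D_T$ and $D^{(N')}_T$ (the latter built from $\min_{j\le N'}X_j$) coincide on $\{\sigma>T\}$ by the local property of stochastic integrals. Since $\EE_{\QQ}D_T^2=\EE_{\QQ}(D^{(N')}_T)^2=e^{b^2T/2}$, Cauchy--Schwarz bounds the total variation distance between the laws of the lowest $N$ ranked paths on $[0,T]$ by $2e^{b^2T/4}\QQ(\sigma\le T)^{1/2}$. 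This tends to $0$ as $N'\to\infty$ by the Gaussian bound for reflected Brownian motions and the definition of $\Lambda_0$.

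The paper's route buys, in exchange, the pathwise semimartingale description \eqref{eq:344} of $\bs{X}_{\uparrow}$. Its gap-space argument also carries over to the system with pauses (Proposition \ref{prop:3502}). There, tie-dependent rates would force an unranked argument to rely on prelimit tie estimates.

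Two smaller points need fixing.
\begin{itemize}
\item Your sub-Gaussian bound $e^{-c(\hat X^n_i(0)-y)^2}$ is not uniform in $n$ once the required drop exceeds order $\sqrt n$ in diffusive units. The uniform exponential bound \eqref{eq:poitail} suffices, since \eqref{condnn} only controls exponential sums.
\item You must actually produce the level $y$. That is, bound $\sup_{t\le T}\hat X^n_{(N)}(t)\le\max_{i\le N}\sup_{t\le T}\hat X^n_i(t)$ by upward Poisson estimates, as in the first term of \eqref{eq:taunn1n}.
\end{itemize}

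Your ``main obstacle'' is also milder than you expect for this theorem. The process $\int_0^{\cdot}1\{\hat X^n_i=\hat X^n_{(1)}\}ds$ is $1$-Lipschitz and flat while $\hat X^n_i>\hat X^n_{(1)}$. So any subsequential limit $\Phi_i$ satisfies $\Phi_i(t)\le\int_0^t1\{X_i=X_{(1)}\}ds$, while $\sum_i\Phi_i(t)\ge t$. Ties of the limit are Lebesgue-null by the occupation times formula, because $\langle X_i-X_j\rangle_t=4t$. Hence equality holds termwise, and no prelimit tie estimate is needed.
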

This theorem is proved in Section \ref{sec:origdyn}.
\begin{remark}
The condition \eqref{condnn} will be satisfied in a variety of common scenarios. For example, \eqref{condnn} holds whenever there exists a deterministic sequence
\(h_i\uparrow\infty\) such that \(\sum_i e^{-c h_i}<\infty\) for every
\(c>0\), and such that, for every $\varepsilon>0$, there exist $\gamma>0$ and $N_0\in\N$ for which
\[
    \liminf_{n\to\infty}
    \mathbb{P}\left(
        \widehat X_i^n(0) \ge \gamma h_i
        \text{ for all } i\ge N_0,\ i\le n
    \right)
    \ge 1-\varepsilon .
\]
This includes linear, polynomial, and super-logarithmic lower envelopes,
for instance \(h_i=i^\alpha\), \(\alpha>0\), or
\(h_i=(\log i)^{1+\eta}\), \(\eta>0\). In particular, it will hold when the associated diffusion scaled gaps are distributed as $\pi^{(n)}_{(a,b)}$ defined in Theorem \ref{jackcor}, for any $a>b>0$, which is an approximate stationary distribution for $\hat{\boldsymbol{Z}}^n$ (see Corollary \ref{cor: mix}). 
\end{remark}

Along the lines of \cite[Remark 2.3]{banerjee2024extremal} it can be shown that the gap process $\boldsymbol{Z} = (Z_1, Z_2, \ldots)$ defined by \eqref{eq:hatxhatz} is a continuous Markov process with state space
$$\Lambda := \{\boldsymbol{z} = (z_1, z_2, \ldots) \in \RR_+^{\infty}: \boldsymbol{x}(\boldsymbol{z}) := (z_1, z_1+ z_2, \ldots) \in \Lambda_0\}.$$
Our next result gives  stationary distributions for this Markov process.
This theorem will be proved in Section \ref{sec:invmzrpf}.
For $\theta>0$, let $Exp(\theta)$ be the probability distribution of an Exponential random variable with mean $\theta^{-1}$.
\begin{theorem}
\label{stationarydistreflectedatlasresult}
    Consider the  reflected infinite Atlas model with drift $\delta=b$. Then, for any $a \in (b,\infty)$ the product-form distribution 
    \begin{equation}\label{eq:muabdef}
    \mu_{a,b} := Exp(a-b) \otimes Exp(a)\otimes Exp(a) \cdots
    \end{equation}
    is a stationary distribution for the Markov process $\boldsymbol{Z}$.
\end{theorem}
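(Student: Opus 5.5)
The plan is to follow the finite-dimensional approximation strategy used for the unreflected infinite Atlas model (Pal--Pitman, Sarantsev--Tsai). We exhibit $\mu_{a,b}$ as the limit of exact product-form invariant laws of suitably modified $N$-particle reflected systems, and pass stationarity to the limit using the well-posedness in Theorem \ref{uniquenessthm}.

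\textbf{Step 1 (finite system with a compensating top drift).} For $N\ge 2$, consider the $N$-particle analogue of \eqref{eq:unranksde} from Remark \ref{rem:weakn}, modified by giving the highest-ranked particle an extra drift $-a$. Since the ranked particles have drift $b$ at rank $1$ and $-a$ at rank $N$, the gaps $\bs{Z}^{(N)}=(Z^{(N)}_1,\dots,Z^{(N)}_N)$ form a semimartingale reflected Brownian motion in $\RR_+^N$. Its data are as follows.
\begin{itemize}
\item Drift vector $(b,-b,0,\dots,0,-a)$.
\item Covariance matrix: tridiagonal, with diagonal $(2,4,\dots,4)$ and off-diagonal entries $-2$.
\item Reflection: at $\{z_1=0\}$ the reflection comes from the local time at the origin. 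At $\{z_k=0\}$, $k\ge 2$, the collision local time $\Lambda_{k-1,k}$ pushes $Z_k$ up by $1$ and pushes $Z_{k-1}$, $Z_{k+1}$ down by $\tfrac12$ each. The upward push on $Z_1$ comes only from the origin local time, not from $\Lambda_{1,2}$, which instead pushes $Z_1$ down by $\tfrac12$.
\end{itemize}
I will verify the Harrison--Williams skew-symmetry condition $2\Sigma = RD + DR^{T}$, with $D$ the diagonal of $\Sigma$ (here $R$ has nonpositive off-diagonal entries; it is not an M-matrix, because of the interaction between the origin reflection and $\Lambda_{1,2}$, so the SRBM theory must be applied with care). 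This is the same tridiagonal structure as in the Atlas case, so the condition should go through.

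Under skew symmetry, the product of exponentials with rate vector $\lambda = -2D^{-1}R^{-1}\mu$ is stationary. I will solve this small triangular system explicitly and check that the drift $-a$ at the top yields $\lambda=(a-b,a,\dots,a)$. The effect of the top drift should propagate to all rates except the first, which is shifted by $b$. If the computation instead gives a different top drift $g_N$, I will simply choose $g_N$ to make the rates come out right. The stationarity will be confirmed via the basic adjoint relationship, checking the interior generator term and each boundary term separately, since the boundary $\{z_1=0\}$ is of a different type.

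\textbf{Step 2 (support and initial condition).} Under $\mu_{a,b}$ the positions $x_k=\sum_{j\le k} z_j$ satisfy $x_k/k\to 1/a$ a.s. by the strong law of large numbers, so $\sum_k e^{-\alpha x_k^2}<\infty$ for every $\alpha>0$. Hence $\mu_{a,b}$ is supported on $\Lambda$, and Theorem \ref{uniquenessthm} applies to the corresponding initial law.

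\textbf{Step 3 (passage to the limit, the main obstacle).} Couple the stationary $N$-particle gap process started from $\mu^{(N)}_{a,b}=Exp(a-b)\otimes Exp(a)^{\otimes (N-1)}$ with initial data that are the first $N$ coordinates of a $\mu_{a,b}$-sample. We must show that, for each fixed $d$ and $T$, the lowest $d$ gaps of the $N$-system converge in law on $C([0,T],\RR_+^d)$ to those of the infinite reflected Atlas system started from $\mu_{a,b}$.

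The plan is to prove tightness of the first $d$ ranked coordinates, which is easy from the SDE representation with bounded drifts. Then show that any subsequential limit of the unranked systems, relabelled, is a weak solution of \eqref{eq:unranksde}, and identify it by the uniqueness in Theorem \ref{uniquenessthm}. The difficulty is controlling the far particles: we must show that, with high probability, over $[0,T]$ no particle initially at rank $\ge M$, and in particular the top particle with drift $-a$ (initially at height about $N/a$), descends to the region occupied by the lowest $d$ ranks.

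I would first try a Gaussian tail estimate, modelled on Sarantsev's comparison arguments: particle $i$ moves by at most $\sqrt2\sup|B_i|$ plus bounded drift contributions. This bound must be combined with rank-comparison/monotonicity, since local-time pushes can be upward or downward. Together with $\sum_k e^{-\alpha x_k^2}<\infty$, this shows that the probability that particle $i$ reaches a fixed level is summable over $i$ uniformly in $N$. Once localization holds, the finite-dimensional marginals of the gaps at each time $t$ converge to those of $\bs{Z}(t)$. Since the $N$-system gaps have law $\mu^{(N)}_{a,b}$ for all $t$, whose $d$-marginals agree with those of $\mu_{a,b}$, we conclude that $\bs{Z}(t)\sim\mu_{a,b}$ for all $t$, which proves the theorem.
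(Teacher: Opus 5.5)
Your proposal is correct, but it takes a genuinely different route from the paper's. The paper never works with finite-$N$ diffusions carrying a compensating drift. Instead, it starts the queueing \emph{system with pauses} in its exact Jackson-network stationary law $\pi^{(n)}_{(a,b)}$ from Theorem \ref{jackcor}. It then verifies condition \eqref{condnn} under this law, using Markov's inequality plus a geometric bound on $E e^{-c\hat X^n_i(0)}$. Finally, it uses the process-level limit of Theorem \ref{convergencethm2} together with $\pi^{(n)}_{(a,b)}\to\mu_{a,b}$ to pass the identity $Ef(\hat U^n_1(t),\dots,\hat U^n_k(t))=Ef(\hat U^n_1(0),\dots,\hat U^n_k(0))$ to the limit.

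You instead stay entirely at the diffusion level, in the spirit of Pal--Pitman and Sarantsev--Tsai. Your Step 1 checks out. With $D=\mathrm{diag}(2,4,\dots,4)$ and the paper's matrix $R$ one has $2\Sigma=RD+DR^{T}$. Moreover, $RD\eta=-2\mu$ with $\eta=(a-b,a,\dots,a)$ returns exactly $\mu=(b,-b,0,\dots,0,-a)$ (and $\mu=(b,-a-b)$ when $N=2$), so the top drift $-a$ needs no adjustment.

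What your route buys is exact, not merely asymptotic, product-form invariant laws for the approximants. It also gives a description of $a$ intrinsic to the diffusion, as the drift carried by the top particle, a kind of boundary condition at infinity. What the paper's route buys is brevity, given machinery it needs anyway, and a direct demonstration that the queueing slack $a$ is what selects $\mu_{a,b}$.

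The cost of your route is Step 3, which cannot be outsourced to Proposition \ref{prop:434}. Adding a particle now both pushes the others down and removes the drift $-a$ from rank $N$. So the monotonicity in $N$ underlying the strong approximative solution is no longer available in the form used there. You must actually carry out the tightness-and-identification argument. In particular, you need to show that $\int_0^T 1\{X^N_i=X^N_{(N)}\}\,ds\to0$ for each fixed $i$. You also need to show that the limiting bottom drift is $b\int_0^t 1\{X_i=X_{(1)}\}\,ds$; ties have zero Lebesgue measure because $d\langle X_i-X_j\rangle=4\,dt$.

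Two small corrections.

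First, $R=I-Q$ with $Q\ge 0$ tridiagonal is in fact a nonsingular M-matrix. $Q$ is similar to the symmetric tridiagonal matrix $S$ with off-diagonal entries $2^{-1/2},\tfrac{1}{2},\dots,\tfrac{1}{2}$. One has $x^T(I-S)x=(x_1-x_2/\sqrt{2})^2+\tfrac{1}{2}\sum_{i=2}^{N-1}(x_i-x_{i+1})^2+\tfrac{1}{2}x_N^2>0$ for $x\neq 0$. This is precisely why the Harrison--Reiman map $\Gamma_R$ used in the paper is well defined, and Harrison--Williams applies without special care.

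Second, your localization needs no rank comparison or monotonicity. In the unranked formulation the only local time is the upward one at $0$. Hence $X_i(t)\ge X_i(0)-aT-\sqrt{2}\sup_{s\le T}|B_i(s)|$ for $t\le T$, uniformly in $N$. The Gaussian-tail and Borel--Cantelli argument of the first lemma in Section \ref{sec:thm1pf} then applies directly.
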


Next, we would like to relate the long time behavior of the process $\hat{\boldsymbol{Z}}^n$ associated with parameters $(a,b)$ with the stationary distribution $\mu_{a,b}$.  For this, we will consider the slight modification of the underlying dynamics, that was referred to as the {\em system with pauses} in Section \ref{sec:intro}.  In order to describe this process with modified dynamics, we first give a distributionally equivalent representation for the process
$(X_{(1)}^n(t), \ldots , X_{(n)}^n(t))$. This representation will be given in terms of another collection $(\tilde\cla_i, \tilde\cld_i, i \in [n])$ of mutually independent Poisson random measures (PRM) on $\RR_+\times \RR_+$ with intensity  as the Lebesgue measure $dt\times dr$ on $\RR_+\times \RR_+$.
As before, suppose that the initial queue-length vector is given as $\{x^n_i, i \in [n]\}$ and that the initial vector is ordered, namely $x^n_1 \le \cdots \le x^n_n$.
Define, for $i \in [n]$,
$$\gamma^n_i :=
\begin{cases}
n-an^{-1/2}, \;\; i \in [n]\setminus\{1\}\\
n-an^{-1/2} + bn^{1/2}, \;\; i =1.
\end{cases}
$$
Then the following representation gives a process  that has the same distribution as the process $(X_{(1)}^n, \ldots , X_{(n)}^n)$ introduced below \eqref{eq:xnit} (we abuse notation and use the same symbols to denote this process). See \cite[Section 2]{banbudest}.

\begin{align*}
X_{(i)}^n(t) &= x^n_i +
\int_{[0,t]\times \RR_+} 1_{[0, \gamma^n_i + \sum_{j=1}^{i-1} 
\gamma^n_j1\{X^n_{(i)}(s-)=X^n_{(j)}(s-)\}]}(r)1_{\{X^n_{(i)}(s-)<X^n_{(i+1)}(s-)\}} \tilde \cla_i(ds\, dr)\\
&\quad - \int_{[0,t]\times \RR_+} 1_{[0, n + n\sum_{j=i+1}^n 
1\{X^n_{(i)}(s-)=X^n_{(j)}(s-)\}]}(r)1_{\{X^n_{(i)}(s-)>X^n_{(i-1)}(s-)\}} \tilde \cld_i(ds\, dr),
\end{align*}
where, by convention, we take $X^n_{(0)}(t)=0$ and $X^n_{(n+1)}(t)=\infty$
for all $t\ge 0$.

We now introduce the process with modified dynamics.  Recall that, in this system with pauses, when there is a tie that involves the ranks $i, i+1, \ldots, i+j$, departures from queues $i+1,...,i+j$ and arrivals to queues $i,i+1,...,i+j-1$ are paused.
Thus, jobs can only arrive to the highest queue in the tie and depart from the lowest queue in the tie.  Denoting the ordered queue-lengths in this modified process as $(Y^n_1(t), \ldots , Y^n_n(t))$ we can give a representation for the evolution for the process in terms of PRM $(\tilde\cla_i, \tilde\cld_i, i \in [n])$ as follows.
\begin{equation}\label{eq:yeqns}
\begin{aligned}
Y_i^n(t) &= x^n_i +
\int_{[0,t]\times \RR_+} 1_{[0, \gamma^n_i]}(r)1_{\{Y^n_{i}(s-)<Y^n_{i+1}(s-)\}} \tilde \cla_i(ds\, dr)\\
&\quad - \int_{[0,t]\times \RR_+} 1_{[0, n]}(r)1_{\{Y^n_{i}(s-)>Y^n_{i-1}(s-)\}} \tilde \cld_i(ds\, dr),
\end{aligned}
\end{equation}
where again by convention, we take $Y^n_{0}(t)=0$ and $Y^n_{n+1}(t)=\infty$
for all $t\ge 0$.
Let $U^n_i := Y^n_i-Y^n_{i-1}$, for $i\in [n]$.
As before, we consider the diffusion scaled processes:
$$\hat Y^n_i(t) = \frac{Y^n_i(t)}{\sqrt{n}}, \; \hat U^n_i(t) = \frac{U^n_i(t)}{\sqrt{n}}, \;\; t \ge 0.$$

Our next theorem gives a result analogous to Theorem \ref{convergencethm} for the processes $\hat{\bs{Y}}^n = (\hat Y^n_1, \ldots , \hat Y^n_n)$ and $\hat{\bs{U}}^n = (\hat U^n_1, \ldots , \hat U^n_n)$ and show that the processes $(\hat{\boldsymbol{X}}^n_{\uparrow}, \hat{\boldsymbol{Z}}^n)$ and
$(\hat{\boldsymbol{Y}}^n, \hat{\boldsymbol{U}}^n)$ have the same limit law.
\begin{theorem}
\label{convergencethm2}
Suppose that the initial queue-length vector $\{\hat{X}^n_i(0), i \in [n]\}$ converges in distribution to some $\mu \in \clp(\Lambda_0)$, and $\bs{X}, {\boldsymbol{X}}_{\uparrow}$ and ${\boldsymbol{Z}}$ are as in Theorem \ref{convergencethm}.
Then, as $n\to \infty$,
$(\hat{\boldsymbol{Y}}^n, \hat{\boldsymbol{U}}^n)$ converges in distribution, in $D([0,\infty), \RR_+^{\infty}\times \RR_+^{\infty})$, to $({\boldsymbol{X}}_{\uparrow}, {\boldsymbol{Z}})$.
\end{theorem}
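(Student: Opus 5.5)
The approach is to follow the same route as the proof of Theorem \ref{convergencethm}: a semimartingale decomposition of each ranked coordinate, tightness of finitely many coordinates, and identification of the limit through the weak uniqueness in Theorem \ref{uniquenessthm}. The only new feature is that the pauses remove activity whenever there is a tie, and we show this perturbation is negligible on the diffusion scale.

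We start from \eqref{eq:yeqns} and write, for each fixed $i$,
\begin{equation*}
\hat Y^n_i(t) = \hat Y^n_i(0) + \frac{1}{\sqrt n}\Big(\gamma^n_i - n\Big)t + \hat M^n_i(t) - \frac{\gamma^n_i}{\sqrt n}\int_0^t 1_{\{Y^n_i(s)=Y^n_{i+1}(s)\}}ds + \sqrt n\int_0^t 1_{\{Y^n_i(s)=Y^n_{i-1}(s)\}}ds.
\end{equation*}
Here $\hat M^n_i$ is a martingale with predictable quadratic variation $\frac1n\int_0^t(\gamma^n_i 1_{\{Y_i<Y_{i+1}\}}+n1_{\{Y_i>Y_{i-1}\}})ds$. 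The drift term equals $-a n^{-1}t$ for $i\ge2$ and $(b-an^{-1})t$ for $i=1$, so in the limit it becomes $b\,1_{\{i=1\}}t$. The term with $Y_{i-1}=Y_i$ at $i=1$ reads $\sqrt n\int_0^t1_{\{Y^n_1=0\}}ds$, which is the prelimit reflection term at the origin. For $i\ge 2$, the two tie terms at an interface $\{Y_i=Y_{i+1}\}$ enter coordinates $i$ and $i+1$ with the nearly equal weights $\gamma^n_i/\sqrt n\approx\sqrt n$. They play the role of the local-time interaction terms in the ranked Atlas SDE, up to the mismatch $(\gamma^n_i-n)/\sqrt n$ times the tie time.

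The key step is to show that, for each fixed $i$ and $T$, $\int_0^T 1_{\{Y^n_i(s)=Y^n_{i+1}(s)\}}ds\to0$ in probability, and in fact that $\sqrt n$ times this integral is tight. For this we use the gap process: $U^n_{i+1}$ is a birth–death-type process with jump rates of order $n$, and each visit to $0$ lasts a time of order $1/n$. Applying Tanaka/semimartingale bounds to $\hat U^n_{i+1}$ (as in \cite[Lemma 4.2]{banbudest}) gives both claims. With these bounds in hand, the quadratic variation of $\hat M^n_i$ converges to $2t$, so the martingale CLT yields Brownian limits. The first $N$ coordinates are C-tight (jumps are of size $n^{-1/2}$), and the tie integrals converge along subsequences to continuous nondecreasing processes that increase only on $\{X_{(i)}=X_{(i+1)}\}$.

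To identify the limit we do not reconstruct unranked particles. We characterize the limit of $(\hat Y^n_1,\dots,\hat Y^n_N)$ as the solution of the ranked reflected Atlas SDE system, which is reflected Brownian motion in the wedge $\{0\le y_1\le\cdots\}$ with rank drift $b$ on $y_1$ and the standard half-half collision local times. The weights are exactly equal because arrivals to rank $i$ and departures from rank $i+1$ are paused at equal rates $\approx n$, so the pushing is symmetric. By uniqueness for the ranked system, equivalently by Theorem \ref{uniquenessthm} together with the identification of the ranked process of $\bs X$ as in \cite{Sarantsev2017InfiniteSystems}, this law is that of $\bs X_\uparrow$.

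The main obstacle is the infinite dimensionality. Finite-dimensional ranked truncations do not by themselves determine the limit, because coordinate $N$ is pushed by coordinate $N+1$. As in Theorem \ref{convergencethm}, we handle this with a comparison/truncation argument. We compare the $n$-system with pauses to one where only the lowest $N$ queues interact, and show that, under the initial-condition tail control (the $\Lambda_0$ assumption, with the analogue of \eqref{condnn}), the lowest $d$ coordinates are unaffected on $[0,T]$ with probability tending to one as $N\to\infty$ uniformly in $n$. Monotonicity/coupling of the paused system in the PRMs should make this comparison cleaner than for the original dynamics. Finally, since the gaps are continuous functions of $\hat{\bs Y}^n$ in the product topology, we obtain the joint convergence of $(\hat{\bs Y}^n,\hat{\bs U}^n)$ to $(\bs X_\uparrow,\bs Z)$.
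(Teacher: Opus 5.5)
\textbf{Finite-$N$ step and limit identification.} Your finite-$N$ argument is sound, and it is a legitimate alternative to Proposition \ref{prop:3502}. The paper instead writes the gaps of the truncated paused system as $\Gamma_R(\hat{\bs V}^{n,N})$, for the Harrison--Reiman reflection matrix $R$, and uses Lipschitz continuity of $\Gamma_R$. That gives C-tightness of the scaled tie times $\bs L^{n,N}$ for free and identifies the limit by continuous mapping. You instead estimate tie times directly and identify limit points through pathwise uniqueness of the finite ranked system.

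Your identification paragraph, however, conflates two different objects:
\begin{itemize}
\item The limit of the \emph{truncated} $N$-queue paused system is $\bs X^{(N)}_{\uparrow}$, not $\bs X_{\uparrow}$.
\item The first $N$ ranks of the full system do not form a closed system.
\end{itemize}
The paper proves no uniqueness for the infinite ranked equations \eqref{eq:344} that could pin down limit points of $(\hat Y^n_1,\dots,\hat Y^n_d)$ directly. Theorem \ref{uniquenessthm} concerns the unranked equation, and the ranked process is identified only as Sarantsev's monotone approximative solution. So you must pass through Proposition \ref{prop:434}, i.e.\ $\bs X^{(N)}_{\uparrow}\to\bs X_{\uparrow}$ in the first $K$ coordinates, as in the proof of Theorem \ref{convergencethm}.

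\textbf{The genuine gap: the comparison step.} This step carries all of the infinite-dimensional content, and you only assert it. Your monotonicity intuition is right as far as it goes. Under the synchronous PRM coupling, the ranked paused dynamics preserves the coordinatewise order, so $Y^{n,N}_i\ge Y^n_i$ for $i\le N$. But this bound is one-sided and does not make the lowest $d$ ranks coincide:
\begin{itemize}
\item In ranked coordinates, the effect of removing rank $N+1$ is handed down one interface at a time. It occurs at arrivals to rank $k$ while ranks $k,k+1$ are tied in one system only.
\item For admissible data with vanishing gaps, e.g.\ $x_i=(\log i)^2$, which satisfies \eqref{condnn}, such ties are frequent. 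So distance in rank gives no localization.
\end{itemize}
What is missing is a particle-level statement of the form ``queues that start high stay high on $[0,T]$, and the lowest ones stay low.'' The paper obtains it by realizing $\bs Y^n$ as the ranking of the unordered system $\clx^n$ in \eqref{eq:xnitmo}, whose named queues have rates slowed by $T^n_i\le 1$. The centered arrival and departure counts of each named queue are then controlled by those of a Poisson process of rate at most $2n$. Consequently the Poisson-tail and \eqref{condnn} estimates of Lemma \ref{comparisonlemmaR} carry over; this is Lemma \ref{comparisonlemmaR2}.

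One further caution applies to whichever representation you use. Named queues in $\clx^n$ interact through $T^n_i$ at \emph{every} tie, not only at the minimum. So even on the event $A$ of Lemma \ref{comparisonlemmaR}, an intermediate queue that ties with a queue of index larger than $N$ can carry the discrepancy down and pass it to the lowest ranks at a later tie. Controlling such relays through chains of ties is exactly what your comparison step has to do. The paper's one-line transfer of Lemma \ref{comparisonlemmaR} does not spell this out either.
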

This theorem is proved in Section \ref{proofofdiffusionlimitsect2}.

Next, we study convergence of stationary distributions of the modified diffusion scaled gap process $\{\hat U^n_i, i \in [n]\}$. 

For this, we consider a more general rank-based routing scheme where arrivals to the $i$-th ranked queue happen at rate $\lambda_i$ and departures from it happen at rate $\mu_i$. During ties, arrivals and departures occur in accordance with the system with pauses, as described before. The gaps between the successive ranked queues in this system (with the shortest queue taken as the first `gap') have an exact description in terms of a special type of (open) Jackson network as follows.
The state space of this Jackson network will be $\NN_0^k$ for some $k \in \NN,$ and the state process will be denoted as
$\bs{Q} = (Q^1, \ldots Q^k)$. The network will be defined in terms of positive parameters $\{\lambda_i\}_{i\in [k]}$
and $\{\mu_i\}_{i\in [k]}$. The external arrival rates and service rates in this Jackson network will be denoted as
$\{\bar \lambda_i\}_{i\in [k]}$ and $\{\bar \mu_i\}_{i\in [k]}$, with 
\[
\bar\lambda_i = 0 \quad i\in [k-1], \qquad \bar\lambda_k = \lambda_k,
\]
and
\[
\bar\mu_1 = \mu_1, \qquad
\bar\mu_i = \lambda_{i-1} + \mu_i, \quad i \in [2,k].
\]
The routing probabilities matrix will be denoted as $\{P_{ij}\}_{i,j \in [k]}$, with
\begin{align*}
P_{1,2} = 1, \;\;& P_{k,k-1} = \frac{\lambda_{k-1}}{\lambda_{k-1}+\mu_k}
\\
P_{i,i-1} = \frac{\lambda_{i-1}}{\lambda_{i-1}+\mu_i}, \;\; &
P_{i,i+1} = \frac{\mu_i}{\lambda_{i-1}+\mu_i},
\quad i \in \{2,..., k-1\},
\end{align*}
with all other $P_{ij}=0$.

Thus in this Jackson network, external arrivals only occur at the $k$-th queue and jobs after completing service at the $i$-th queue, $i\in \{2,...,k-1\}$ can only go to either the $(i-1)$-th or the $(i+1)$-th queue, while for the first queue the service completions are routed with probability $1$ to the second queue. Finally for the $k$-th queue, a completed job is routed to the $(k-1)$-th queue with probability $\frac{\lambda_{k-1}}{\lambda_{k-1}+\mu_k}$ while it leaves the system with
probability $\frac{\mu_{k}}{\lambda_{k-1}+\mu_k}$.

To see the connection between this Jackson network and the process $\bs{U}^n = (U^n_1, \ldots , U^n_n)$ of interest here, note that, from \eqref{eq:yeqns}, the evolution equation for $\bs{U}^n$ can be written as
\begin{equation*}
\begin{aligned}
U_i^n(t) &= (x^n_i- x^n_{i-1}) +
\int_{[0,t]\times \RR_+} 1_{[0, \gamma^n_i]}(r)1_{\{U^n_{i+1}(s-)>0\}} \tilde \cla_i(ds\, dr)\\
&\quad + \int_{[0,t]\times \RR_+} 1_{[0, n]}(r)1_{\{U^n_{i-1}(s-)>0\}} \tilde \cld_{i-1}(ds\, dr)
- \int_{[0,t]\times \RR_+} 1_{[0, n]}(r)1_{\{U^n_{i}(s-)>0\}} \tilde \cld_i(ds\, dr)\\
&\quad -\int_{[0,t]\times \RR_+} 1_{[0, \gamma^n_{i-1}]}(r)1_{\{U^n_{i}(s-)>0\}} \tilde \cla_{i-1}(ds\, dr)
,
\end{aligned}
\end{equation*}
We can now identify the system $\bs{U}^n$ with the above Jackson network by taking $k=n$, $\lambda_i = \gamma^n_i$, and $\mu_i=n$, for $i\in [n]$.

We now give a result for the stationary distribution of the Jackson network $\bs{Q}$ which is a consequence of classical facts about Jackson networks\cite{Kelly1979}.
\begin{theorem}\label{thm:jack}
$\bs{Q}$ has a unique  stationary distribution on $\NN_0^k$ if and only if
\[
\rho_i := \prod_{j=i}^k \frac{\lambda_j}{\mu_j} < 1,
\qquad 1 \le i \le k .
\]
In this case, the stationary distribution is
\[
\pi(\bs{q}) = \prod_{i=1}^k \rho_i^{q_i} (1-\rho_i),
\qquad \bs{q} = (q_1,\dots,q_k) \in \NN_0^k.
\]
\end{theorem}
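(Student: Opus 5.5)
Jackson's theorem (as in \cite{Kelly1979}) reduces the question to the traffic equations: an open Jackson network, irreducible with every station eventually able to leave the system, has a stationary distribution if and only if the solution $(\Lambda_1,\dots,\Lambda_k)$ of
\[
\Lambda_i = \bar\lambda_i + \sum_{j} \Lambda_j P_{ji}, \qquad i\in[k],
\]
satisfies $\Lambda_i<\bar\mu_i$ for every $i$. In that case the stationary law is unique and equals $\prod_i (1-\Lambda_i/\bar\mu_i)(\Lambda_i/\bar\mu_i)^{q_i}$. The plan is therefore:
\begin{enumerate}
\item check the structural hypotheses;
\item solve the traffic equations explicitly;
\item verify that $\Lambda_i/\bar\mu_i=\rho_i$.
\end{enumerate}

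\textbf{Structural hypotheses.} Every state of $\NN_0^k$ is reachable. External arrivals enter at station $k$, and jobs move down to station $1$ through the positive routing probabilities $P_{i,i-1}$. Jobs leave only from station $k$, with probability $\mu_k/(\lambda_{k-1}+\mu_k)>0$. From any station, the positive probabilities $P_{i,i+1}$ eventually route a job to $k$, so every job leaves the system a.s. Hence $\sum_{n}P^n<\infty$ and the traffic equations have the unique solution $\Lambda=\bar\lambda^{T}(I-P)^{-1}$.

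\textbf{Solving the traffic equations.} Write $\Lambda_i P_{i,i+1}$ and $\Lambda_{i+1}P_{i+1,i}$ for the flows across the edge $\{i,i+1\}$. The route matrix is a path (birth--death) graph and the only source and sink sit at station $k$. I therefore guess a detailed-balance (cut) solution: by flow conservation across each cut $\{1,\dots,i\}\mid\{i+1,\dots,k\}$, the net flow across every edge is zero. This gives $\Lambda_i P_{i,i+1}=\Lambda_{i+1}P_{i+1,i}$. For $i=1$ it reads
\[
\Lambda_1 = \Lambda_2 \frac{\lambda_1}{\lambda_1+\mu_2}.
\]
With the ansatz $\Lambda_i=\rho_i\bar\mu_i$, the left side becomes $\rho_1\mu_1$ and the right side $\rho_2\lambda_1$. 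These agree because $\rho_1=\rho_2\lambda_1/\mu_1$.

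For $2\le i\le k-1$, the edge equation reads $\rho_i\bar\mu_i\,\mu_i/\bar\mu_i=\rho_{i+1}\bar\mu_{i+1}\,\lambda_i/\bar\mu_{i+1}$, that is, $\rho_i\mu_i=\rho_{i+1}\lambda_i$. This is exactly the recursion $\rho_i=\rho_{i+1}\lambda_i/\mu_i$ in the definition of $\rho_i$.

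Finally, the balance at station $k$ is
\[
\Lambda_k = \lambda_k + \Lambda_{k-1}P_{k-1,k}.
\]
Substituting the ansatz turns this into $\rho_k(\lambda_{k-1}+\mu_k)=\lambda_k+\rho_{k-1}\mu_{k-1}$. Since $\rho_k=\lambda_k/\mu_k$ and $\rho_{k-1}\mu_{k-1}=\rho_k\lambda_{k-1}$, the identity holds; for $k=2$ one uses $P_{1,2}=1$ instead.

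The edge equations fix all ratios $\Lambda_i/\Lambda_{i+1}$, and the station-$k$ equation fixes the overall scale. Together they are equivalent to the full traffic system, so by uniqueness the ansatz is the solution.

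\textbf{Conclusion and main obstacle.} The stability condition $\Lambda_i<\bar\mu_i$ for all $i$ is then exactly $\rho_i<1$ for all $i$, and Jackson's product form gives the stated $\pi$. If some $\rho_i\ge1$, the standard converse in Jackson's theorem applies: station $i$ is then null recurrent or transient in the product structure, so no stationary distribution exists. The only real obstacle is this bookkeeping, in particular:
\begin{itemize}
\item checking that the cut (reversibility) ansatz is consistent with the boundary stations $1$ and $k$;
\item citing the "only if" direction of Jackson's theorem in a form that covers irreducible open networks.
\end{itemize}
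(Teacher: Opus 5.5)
Your proposal is correct and follows the paper's route: check the Jackson hypotheses, solve the traffic equations explicitly, identify $\Lambda_i/\bar\mu_i$ with $\prod_{j=i}^k\lambda_j/\mu_j$, and invoke the classical product-form theorem. The differences are minor. Your cut relation $\Lambda_iP_{i,i+1}=\Lambda_{i+1}P_{i+1,i}$ replaces the paper's second-order recursion from station~1, whose induction implicitly rederives that relation. You also treat irreducibility and the ``only if'' direction explicitly, which the paper leaves to the citation; there, your heuristic about station $i$ being null recurrent or transient is better replaced by the standard fact that $\prod_i\rho_i^{q_i}$ solves global balance for any $\rho_i>0$, and an irreducible positive recurrent chain has a unique invariant measure up to scaling, which would force this measure to be summable.
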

As an immediate consequence of this theorem and the identification of $\bs{U}^n$ with a Jackson network of the above form, we now have the following result on convergence of stationary distributions of the modified diffusion scaled gap process $\{\hat U^n_i, i \in [n]\}$.
In what follows, we will identify a probability measure on $\RR_+^n$ with a probability measure on $\RR_+^{\infty}$ in the obvious manner. Specifically, if $V = (V_1, \ldots, V_n)$ is a $\RR_+^n$ valued random variable with probability law $\theta$ on $\RR_+^n$, we will denote the law of $(V_1, \ldots, V_n, 0, \ldots)$ on $\RR_+^{\infty}$, once more as $\theta$.
Let $\hat{\boldsymbol{U}}^n = (\hat U^n_1, \ldots , \hat U^n_n)$.
Recall the parameters $(a,b)$ that characterize the dynamics of $\hat{\boldsymbol{U}}^n$.
\begin{theorem}\label{jackcor}
For each $n$, the Markov process $\hat{\boldsymbol{U}}^n$ has a unique stationary distribution $\pi^{(n)} = \pi^{(n)}_{(a,b)}$ on $\NN_0^n/\sqrt{n}$, given as 
$$\pi^{(n)}([z_1, \infty)\times \cdots \times [z_n, \infty)) = \prod_{i=1}^n
\left(\rho_i^{(n)}\right)^{\sqrt{n}z_i}, \;
z_i \in \NN_0/\sqrt{n}, \; i \in [n],
$$
where 
\[
\rho_i^{(n)}
=
\prod_{j=i}^n
\left(1-\frac{a}{n^{3/2}}+\frac{b}{\sqrt{n}}\mathbf{1}_{\{j=1\}}\right), \; i \in [n].
\]
Furthermore, regarding $\pi^{(n)}$ as a probability measure on $\RR_+^{\infty}$, $\pi^{(n)} \to \mu_{a,b}$, weakly, where $\mu_{a,b}$ is as in Theorem \ref{stationarydistreflectedatlasresult}.
\end{theorem}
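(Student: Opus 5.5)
The plan has two parts. First, identify the stationary law of $\hat{\bs{U}}^n$ exactly through Theorem \ref{thm:jack}. Second, pass to the limit coordinatewise.

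For the first part, the evolution equation for $\bs{U}^n$ written before Theorem \ref{thm:jack} shows that $\bs{U}^n$ is a continuous-time Markov chain on $\NN_0^n$ whose generator coincides with that of the Jackson network $\bs{Q}$ with $k=n$, $\lambda_i=\gamma^n_i$ and $\mu_i=n$. I will check the transitions one by one:
\begin{itemize}
\item[(i)] An arrival to rank $i$ when $U^n_{i+1}>0$ (rate $\gamma^n_i$) moves a unit from gap $i+1$ to gap $i$. For $i=n$ this is an external arrival to station $n$ at rate $\lambda_n$.
\item[(ii)] A departure from rank $i$ when $U^n_i>0$ (rate $n$) moves a unit from gap $i$ to gap $i+1$. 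For $i=n$ the unit leaves the system.
\end{itemize}
So station $i$, when nonempty, serves at total rate $\lambda_{i-1}+\mu_i$ and routes to $i-1$ or $i+1$ in the stated proportions, with station $1$ serving at rate $\mu_1$ and routing to $2$. This is exactly the network defined before Theorem \ref{thm:jack}.

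It then remains to verify $\rho_i^{(n)}<1$. Dividing, $\gamma^n_j/n = 1-a n^{-3/2}+b n^{-1/2}\mathbf 1_{\{j=1\}}$, which gives the displayed $\rho^{(n)}_i$. For $i\ge2$ each $\rho^{(n)}_i$ is a product of factors $1-an^{-3/2}<1$. For $i=1$ we have
\[
\rho^{(n)}_1=(1+bn^{-1/2}-an^{-3/2})(1-an^{-3/2})^{n-1}.
\]
Since $\log(1-x)\le -x$, this satisfies $\log\rho_1^{(n)}\le bn^{-1/2}-an^{-1/2}+O(n^{-1})$, hence it is less than $1$ for large $n$ because $a>b$. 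For small $n$ I will either restrict to $n$ large or check the inequality directly; this is the only point that needs some care. Theorem \ref{thm:jack} then gives uniqueness and the product-geometric form. Summing the geometric tails yields $\PP(Q_i\ge q_i)=\rho_i^{q_i}$, and rescaling by $\sqrt n$ gives the displayed formula for $\pi^{(n)}$.

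For the weak limit, convergence in $\clp(\RR_+^\infty)$ with the product topology is equivalent to convergence of finite-dimensional marginals. Under $\pi^{(n)}$ the coordinates are independent, so it suffices to show, for each fixed $i$, that $\hat U_i^n$ converges in law to the corresponding exponential. Here $\pi^{(n)}(\hat U_i^n\ge z)=(\rho_i^{(n)})^{\lceil\sqrt n z\rceil}$.
\begin{itemize}
\item For fixed $i\ge2$, $\log\rho_i^{(n)}=(n-i+1)\log(1-an^{-3/2})=-an^{-1/2}+O(n^{-1})$. Therefore $(\rho_i^{(n)})^{\lceil\sqrt n z\rceil}\to e^{-az}$, the tail of $Exp(a)$.
\item For $i=1$, $\log\rho_1^{(n)}=-(a-b)n^{-1/2}+O(n^{-1})$, so the tail tends to $e^{-(a-b)z}$, the tail of $Exp(a-b)$.
\end{itemize}
Tail convergence at every $z\ge 0$ gives weak convergence of each coordinate. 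Independence gives convergence of every finite product marginal to $Exp(a-b)\otimes Exp(a)^{\otimes(d-1)}$. Hence $\pi^{(n)}\to\mu_{a,b}$.

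The main obstacle is the precise identification of $\bs{U}^n$ with the Jackson network, especially the boundary stations $1$ and $n$ and the role of $U^n_0$. The rest is elementary asymptotics.
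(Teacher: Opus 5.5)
Your proposal is correct and follows the paper's route: identify $\bs{U}^n$ with the Jackson network of Theorem \ref{thm:jack} (with $k=n$, $\lambda_i=\gamma^n_i$, $\mu_i=n$), read off the product-geometric law, and take logarithms of the factorized geometric tails to obtain the limit $-a\sum_\ell z_\ell+bz_1$. Your hedge about small $n$ is unnecessary, since applying $\log(1+u)\le u$ to both factors gives $\log\rho^{(n)}_1\le (b-a)n^{-1/2}<0$ exactly. This holds for every $n$ with $n^{3/2}>a$, which is needed anyway for the rates $\gamma^n_i$ to be positive.
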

We note the following important corollary of the above result and Theorem \ref{convergencethm}.
Denote the coordinate sequence on $(\RR_+^{\infty}, \clb(\RR_+^{\infty}))$ as $\bs{\omega} = (\omega_1, \omega_2, \ldots).$
Let $\tilde \om_i = \om_{i}- \om_{i-1}$, $i \in \NN$, with $\om_0:=0$, and let $\tilde{\bs{\om}} = (\tilde\omega_1, \tilde\omega_2, \ldots)$. 
With $\mu_{a,b}$ as in \eqref{eq:muabdef}, let $\nu_{a,b}$ be the probability measure on $(\RR_+^{\infty}, \clb(\RR_+^{\infty}))$ under which $\boldsymbol{\omega}\sim \nu_{a,b}$ 
when the associated 
$\tilde{\bs{\om}}\sim \mu_{a,b.}$ Also, and with analogous notation of coordinate sequence as above, let $\tilde \pi^{(n)}_{(a,b)}$ be the probability measure on $(\RR_+^{n}, \clb(\RR_+^{n}))$, under which $\boldsymbol{\omega}\sim \tilde \pi^{(n)}_{(a,b)}$ when $\tilde{\bs{\omega}}\sim \pi^{(n)}_{(a,b)}.$ 
Then we have the following corollary for the system under the original dynamics.
\begin{corollary}
\label{cor: mix}
Consider the Markov process $\hat{\boldsymbol{X}}^n_{\uparrow}$ started with the initial distribution 
$\tilde \pi^{(n)}_{(a,b)}$. Then this process has the following approximate stationarity property: For each fixed $T<\infty$, $k \in \NN$, and $f\in C_b(\RR_+^k)$,
$$
\limsup_{n\to \infty} \sup_{s,t \in [0,T]} \left| \EE f(\hat X^n_{(1)}(t), \ldots , \hat X^n_{(k)}(t))
- \EE f(\hat X^n_{(1)}(s), \ldots , \hat X^n_{(k)}(s))\right| =0.$$
Furthermore, as $n\to \infty$,
$(\hat{\boldsymbol{X}}^n_{\uparrow}, \hat{\boldsymbol{Z}}^n)$ converges in distribution, in $D([0,\infty), \RR_+^{\infty}\times \RR_+^{\infty})$, to the stationary process $({\boldsymbol{X}}_{\uparrow}, {\boldsymbol{Z}})$, where  $\boldsymbol{X}$  is the weak solution of  \eqref{eq:unranksde} with initial distribution $\nu_{a,b}$ and with $\delta=b$.
\end{corollary}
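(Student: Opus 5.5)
The plan is to combine Theorem \ref{convergencethm} with Theorems \ref{jackcor} and \ref{stationarydistreflectedatlasresult}.

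\emph{Step 1: the initial laws.} Under $\tilde \pi^{(n)}_{(a,b)}$ the initial ranked vector is the partial-sum image of gaps distributed as $\pi^{(n)}_{(a,b)}$. By Theorem \ref{jackcor}, $\pi^{(n)}_{(a,b)}\to \mu_{a,b}$ weakly on $\RR_+^\infty$. The partial-sum map $\bs z\mapsto(z_1,z_1+z_2,\ldots)$ is continuous in the product topology, so $\tilde\pi^{(n)}_{(a,b)}\to\nu_{a,b}$.

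Next we check $\nu_{a,b}\in\clp(\Lambda_0)$. Under $\nu_{a,b}$, $\omega_i$ is $Exp(a-b)$ plus a sum of $i-1$ i.i.d.\ $Exp(a)$ variables. By the strong law of large numbers, $\omega_i/i\to 1/a$ a.s., so $\sum_i e^{-\alpha\omega_i^2}<\infty$ a.s.\ for every $\alpha>0$. Monotonicity of the coordinates is automatic.

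\emph{Step 2: condition \eqref{condnn}.} We need uniform control of $\sum_{i=N}^n e^{-c\hat X^n_i(0)}$. Under $\pi^{(n)}$ the scaled gaps $\hat U_j$ are independent geometric variables on $\NN_0/\sqrt n$ with $P(\hat U_j\ge z)=(\rho^{(n)}_j)^{\sqrt n z}$. For $j\ge2$ we have $\rho^{(n)}_j=(1-a n^{-3/2})^{n-j+1}$. The term $\rho^{(n)}_1$ carries the extra factor $1+b/\sqrt n$, and we need $\rho^{(n)}_1<1$ with the gap mean bounded; indeed $\sqrt n\,(1-\rho_1^{(n)})\to a-b$.

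For $j\ge 2$ this gives $\sqrt n(1-\rho^{(n)}_j)\approx a(n-j+1)/n$. This rate is not bounded below uniformly in $j\le n$, so gaps near the top are large. That only helps: the mean of $e^{-c\hat U_j}$ is then smaller.

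We bound
$$\EE e^{-c\hat X^n_i(0)}=\prod_{j\le i}\EE e^{-c\hat U_j},$$
where each factor is the Laplace transform of a scaled geometric variable with parameter $p_j=1-\rho_j^{(n)}$. Each factor is at most $\kappa<1$ uniformly in $n$ and $j\ge 2$, because $\sqrt n\,p_j\le a+o(1)$ keeps the means bounded below. Hence $\EE\sum_{i\ge N}e^{-c\hat X^n_i(0)}\le C\kappa^{N}/(1-\kappa)$, and Markov's inequality gives \eqref{condnn}.

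I expect this uniform geometric bound to be the main technical point. One has to handle the top indices, where $p_j$ is tiny, and confirm that the Laplace transform is monotone in the mean. Both follow from the explicit formula $\EE e^{-c\hat U}=p/(1-(1-p)e^{-c/\sqrt n})$, which is decreasing as $p$ decreases.

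\emph{Step 3: convergence.} Theorem \ref{convergencethm} now applies. It yields $(\hat{\bs X}^n_\uparrow,\hat{\bs Z}^n)\Rightarrow(\bs X_\uparrow,\bs Z)$, where $\bs X$ solves \eqref{eq:unranksde} with $\delta=b$ and initial law $\nu_{a,b}$. The initial gap law is $\mu_{a,b}$, which is stationary for $\bs Z$ by Theorem \ref{stationarydistreflectedatlasresult}. Since $X_{(i)}=\sum_{j\le i}Z_j$, the ranked process $\bs X_\uparrow$ is a measurable function of $\bs Z$, so it is also stationary.

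\emph{Step 4: approximate stationarity.} The limit is continuous, so convergence in $D$ implies convergence of the finite-dimensional marginals at every time $t$. Fix $f\in C_b(\RR^k_+)$ and set $g_n(t)=\EE f(\hat X^n_{(1)}(t),\ldots,\hat X^n_{(k)}(t))$. Then $g_n(t)\to c:=\EE_{\nu_{a,b}}f(\omega_1,\ldots,\omega_k)$ for each $t$, which is a constant by stationarity.

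This pointwise convergence must be upgraded to uniform convergence over $[0,T]$. Suppose it fails. Then there are $t_n\to t^*$ along a subsequence with $|g_n(t_n)-c|>\epsilon$. Weak convergence in $D$ to a continuous limit, together with the continuity of the evaluation map at continuous paths, gives $\hat{\bs X}^n_\uparrow(t_n)\Rightarrow \bs X_\uparrow(t^*)$. This is a contradiction. Finally, the supremum of $|g_n(t)-g_n(s)|$ over $s,t\in[0,T]$ is at most $2\sup_{t\in[0,T]}|g_n(t)-c|$, which tends to $0$.
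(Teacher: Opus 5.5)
Your proposal is correct and follows the same route as the paper: verify \eqref{condnn} and $\tilde\pi^{(n)}_{(a,b)}\to\nu_{a,b}$, apply Theorem \ref{convergencethm}, and use the stationarity of the limit given by Theorem \ref{stationarydistreflectedatlasresult}. You also supply two details that the paper only asserts at this point, and both are sound: the Laplace-transform and Markov-inequality bound for \eqref{condnn} (exactly the argument the paper gives in Section \ref{sec:invmzrpf}), and the subsequence argument that upgrades pointwise-in-$t$ convergence to uniform convergence over $[0,T]$.
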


The following corollary to Theorem \ref{jackcor} provides asymptotic formulas for some key  system statistics under the approximate stationary distribution $\tilde \pi^{(n)}_{(a,b)}$.

\begin{corollary}\label{corstat}
    Consider the Markov process $\hat{\boldsymbol{X}}^n_{\uparrow}$ started with the initial distribution 
$\tilde \pi^{(n)}_{(a,b)}$. Then, we have the following.
\begin{itemize}
    \item[(i)] For any fixed $k \in \mathbb{N}$, the $k$-th ranked queue satisfies
    $$
\frac{\EE[X_{(k)}^{n}(0)]}{\sqrt n\left(\frac{1}{a-b} + \frac{k-1}{a}\right)}  \rightarrow 1, \ \text{ as } \ n \rightarrow \infty.
    $$
    \item[(ii)] The asymptotics for the {\em system imbalance index} (cf. \cite{banbudest}) are given as
    $$
\frac{\EE[X_{(n)}^{n}(0) - X_{(1)}^{n}(0)]}{n^{3/2}a^{-1} \log n} \rightarrow 1, \ \text{ as } \ n \rightarrow \infty.
    $$
    \item[(iii)] The average queuelength satisfies
    $$
    \frac{\EE\left[\frac{1}{n}\sum_{k=1}^n X_{(k)}^{n}(0)\right]}{n^{3/2}a^{-1}} \rightarrow 1, \ \text{ as } \ n \rightarrow \infty.
    $$
\end{itemize}
\end{corollary}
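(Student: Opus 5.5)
Under $\tilde\pi^{(n)}_{(a,b)}$ the unscaled gaps $U_i := Z^n_i(0)$ are independent geometric random variables on $\NN_0$, with $\PP(U_i\ge m)=(\rho_i^{(n)})^m$ by Theorem \ref{jackcor}. Hence $\EE U_i = \rho_i^{(n)}/(1-\rho_i^{(n)})$. Every quantity in the corollary is a linear functional of the gaps, so the proof reduces to explicit sums of $\rho_i/(1-\rho_i)$:
\[
X^n_{(k)}(0)=\sum_{i=1}^k U_i,\qquad X^n_{(n)}(0)-X^n_{(1)}(0)=\sum_{i=2}^n U_i,\qquad \frac1n\sum_{k=1}^n X^n_{(k)}(0)=\sum_{i=1}^n \frac{n-i+1}{n}U_i .
\]

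Set $r=1-a n^{-3/2}$. For $i\ge 2$ we have $\rho_i=r^{\,n-i+1}$. For $i=1$ we have $\rho_1=r^n\bigl(1+b n^{-1/2}/r\bigr)$, i.e. $\rho_1 = r^{n-1}(r+bn^{-1/2})$.

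\textbf{Part (i).} With $m=n-i+1$ and $i$ fixed, $1-\rho_i = 1-r^m = a n^{-1/2}(1+O(n^{-1/2}))$, so $\EE U_i\sim \sqrt n/a$. For $i=1$, $1-\rho_1 = 1-(1-an^{-1/2}+O(n^{-1}))(1+bn^{-1/2}+O(n^{-3/2})) = (a-b)n^{-1/2}+O(n^{-1})$, giving $\EE U_1\sim\sqrt n/(a-b)$. Summing over $i\le k$ yields (i).

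\textbf{Part (ii).} Write $\sum_{i=2}^n \EE U_i=\sum_{m=1}^{n-1} r^m/(1-r^m)$. Since $m a n^{-3/2}\le a n^{-1/2}\to0$ uniformly in $m\le n$, we have $1-r^m = m a n^{-3/2}(1+O(n^{-1/2}))$ uniformly. Therefore the sum equals $(1+o(1))\,\frac{n^{3/2}}{a}\sum_{m=1}^{n-1}\frac1m\sim \frac{n^{3/2}}{a}\log n$. This uniform expansion of $1-r^m$ is the only delicate point; it holds because $n\cdot n^{-3/2}\to0$, which keeps every $\rho_i$ in the regime where $1-r^m$ is linear in $m$.

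\textbf{Part (iii).} The sum is $\frac1n\sum_{i}(n-i+1)\EE U_i$. The $i=1$ term is $O(\sqrt n)$. For $i\ge2$, again with $m=n-i+1$, the same uniform approximation gives
\[
\frac1n\sum_{m=1}^{n-1} m\cdot \frac{(1+o(1))n^{3/2}}{am} = (1+o(1))\frac{n^{3/2}}{a}\cdot\frac{n-1}{n},
\]
and the $O(\sqrt n)$ term is negligible, which proves (iii). No further results from the paper are needed: the law at time $0$ is exactly $\tilde\pi^{(n)}_{(a,b)}$, so the dynamics never enter. The only care required is that the $o(1)$ errors are uniform in $m$ when summing in (ii) and (iii).
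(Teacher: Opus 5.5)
Your proposal is correct and follows essentially the same route as the paper: independent geometric gaps with means $\rho_i^{(n)}/(1-\rho_i^{(n)})$, the expansions $1-\rho_1^{(n)}=(a-b)n^{-1/2}(1+O(n^{-1/2}))$ and $1-\rho_i^{(n)}=a(n-i+1)n^{-3/2}(1+O(n^{-1/2}))$ for $i\ge 2$, and then direct summation (a harmonic sum for (ii), the weights $(n-i+1)/n$ for (iii)). You also state explicitly that the $O(n^{-1/2})$ error is uniform in $m=n-i+1$; the paper relies on this uniformity without saying so when it sums these estimates over $i$.
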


\begin{remark}\label{rem:qual}
We now use Corollary \ref{corstat} to compare the performance of the MJSQ scheme considered here with two basic policies in the many server regime.

\textbf{(i) Random Routing (RR): } Suppose that the total arrival rate is $n^2 - (a-b)\sqrt{n}$ and service rate is $n$ at each server, and all the jobs are routed at random. Then, it can be checked from the fact that the stationary queuelengths in RR are iid Geometric random variables on $\NN_0$ with success probability $(a-b)n^{-3/2}$, that the expected length of any given queue (and consequently, the expected average queuelength), in steady state, is exactly $\frac{n^{3/2}}{(a-b)}-1$. Moreover, the expected queuelength of the $k$-th ranked queue, scaled by $k\sqrt{n}/(a-b)$, approaches $1$ as $n \to \infty$, and the system imbalance index grows like $n^{3/2} (a-b)^{-1}\log n$. The first part of the last statement can be checked by observing that the $k$-th ranked queuelength, scaled by $\sqrt{n}$, weakly converges to a Gamma distribution with shape parameter $k$ and rate parameter $a-b$, and verifying a uniform integrability property. The second part  of the statement follows from calculations involving order statistics of iid Geometric random variables. Thus, when $a-b \ll a$ (the system is very heavily loaded), load is balanced much more effectively under the MJSQ scheme considered here, and the average queuelength is also significantly smaller. Furthermore, although the shortest queue gets a much larger stream of arrivals routed to it under the MJSQ scheme than under RR,
we find that it has the identical first-order asymptotic behavior under the two schemes. Thus, if `priority customers' are routed to the shortest queue via the Poisson stream of rate $b\sqrt{n}$, their waiting time is roughly the same as RR, and there is no added congestion at the shortest queue due to this routing.

\textbf{(ii) Join-the-shortest-queue (JSQ): }
Since we are interested in comparison at steady state, we can equivalently consider a system that is slowed down by a factor of $n$. With that in mind,
consider the JSQ policy for a system consisting of $n$ servers, each processing jobs at rate $1$. Jobs arrive at a central dispatcher at rate $n - (a-b)n^{-1/2}$, and are then sequentially routed to the server with the shortest queue. The steady state of this system  serves as a direct comparison to that of the MJSQ routing scheme considered here. Many server heavy traffic schemes for JSQ give rise to hard and interesting problems which form an active area of research. In particular, $n$-server JSQ systems with net arrival rate $n - \lambda n^{1-\alpha}$, where $\alpha,\lambda>0$, have been extensively studied. When $\alpha=1/2$, this corresponds to the standard \emph{Halfin-Whitt regime} \cite{HW}, where most servers have queue-length one and rescaled fluctuations from this `all length one queues' system converge weakly to a reflected singular diffusion process with interesting long time behavior \cite{eschenfeldt2018join,Banerjee_2019,Banerjee_2020,Anton20}. Other well-studied regimes are the \emph{sub-Halfin-Whitt ($\alpha \in (0,1/2)$) \cite{liu2020steady}, super-Halfin-Whitt ($\alpha \in (1/2,1)$) \cite{liu2021universal,zhao2025many}, non-degenerate slowdown ($\alpha = 1$) \cite{atar2012diffusion,GW18}, and the super slowdown \cite{Hurtado-Lange2022,raj2024exponential} ($\alpha>1$)} regimes. 

The regime in the current work corresponds to $\alpha=3/2$. To the best of our knowledge, obtaining suitable diffusion approximations and a full picture of the steady state behavior of the JSQ system when $\alpha >1$ (in particular, $\alpha=3/2$) is an open problem. It has been conjectured that both the minimum and the average steady state queuelengths for the JSQ system scale like $n^{\alpha-1}$ when $\alpha>1$. This has been resolved for the average queuelength for $\alpha>1$ by \cite{raj2024exponential}, who obtained a large deviations result for this quantity. In comparison, for the MJSQ system considered here, Corollary \ref{corstat} shows that the average queuelength scales like $n^{3/2}$. Thus, the price we pay for the significantly reduced communication cost of MJSQ is in higher average queuelengths. However, based on preliminary calculations, we conjecture that MJSQ and JSQ deliver comparable efficiency at the level of the $k$ shortest queues for any fixed $k$. 
In future work, we will consider schemes that interpolate between JSQ and MJSQ  by suitably scaling up the proportion of incoming jobs sent to the shortest queue in the MJSQ policy. This may provide a systematic approach for analyzing the challenging regime $\alpha >1$, while also shedding light on the tradeoff between load-balancing performance and communication overhead.

\end{remark}

As another corollary of Theorem \ref{thm:jack}, we see that for the {\em Marginal Size Bias Load-Balancing policies} studied in 
\cite{banbudest} with a fixed number of queues $k$, the stationary gap distribution of the associated system with pauses, under the heavy traffic limit, converges to the same distribution as for the original system. This distribution corresponds to the unique stationary law for the finite-dimensional reflected diffusion process obtained as the (transient) heavy traffic limit, and the `interchange of limits'  for the original system that established the convergence of stationary distributions to this limiting distribution was proved in \cite[Theorem 3]{banbudest}. 
\begin{corollary}
\label{cor:mjsq}
Consider the Jackson network of the above form  with rates depending on $n$, namely,   $\lambda_i \equiv \lambda_i^{(n)} = n - a_i \sqrt{n}$ and $\mu_i \equiv \mu_i^{(n)} = n$ for $i\in [k]$, where $a_i$ are real numbers such that
 $\sum_{j=i}^k a_j > 0$ for all $1 \le i \le k$.
Then, $\bs{Q}$ has a unique stationary distribution. Denoting the unique stationary distribution of $\bs{Q}^n := \bs{Q}/\sqrt{n}$ as $\pi^{(n)}$,
$\pi^{(n)}$ converges weakly to $\bigotimes_{i=1}^k Exp\left(\sum_{j=i}^k a_j\right)$.
\end{corollary}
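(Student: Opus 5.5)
The plan is to apply Theorem \ref{thm:jack} directly, and then take limits in the explicit product-form distribution. With $\lambda_j^{(n)} = n - a_j\sqrt{n}$ and $\mu_j^{(n)} = n$, we have
\[
\rho_i^{(n)} = \prod_{j=i}^k \Bigl(1 - \frac{a_j}{\sqrt{n}}\Bigr).
\]
Since $k$ is fixed, expanding the product gives
\[
\rho_i^{(n)} = 1 - \frac{s_i}{\sqrt{n}} + O(1/n), \qquad s_i := \sum_{j=i}^k a_j > 0,
\]
where the $O(1/n)$ term is uniform in $i \in [k]$. Hence there is $n_0$ such that $0 < \rho_i^{(n)} < 1$ for all $i \in [k]$ and all $n \ge n_0$. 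The lower bound $\rho_i^{(n)}>0$ holds once $n > \max_j a_j^2$, which is needed for $\lambda_j^{(n)}$ to be positive rates in the first place. For such $n$, Theorem \ref{thm:jack} gives existence and uniqueness of the stationary distribution, and it equals $\prod_{i=1}^k (\rho_i^{(n)})^{q_i}(1-\rho_i^{(n)})$. (The statement is meant for $n$ large, as the rates must be positive; we interpret "unique stationary distribution" in that sense.)

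Under this product form, the coordinates $Q^i$ are independent geometric random variables with
\[
\PP(Q^i \ge m) = (\rho_i^{(n)})^m .
\]
Because the limiting measure is a product of laws with continuous distribution functions, weak convergence follows from convergence of the joint tail functions at every point $(z_1,\dots,z_k)\in\RR_+^k$. By independence, this reduces to showing $\PP(Q^i/\sqrt{n} \ge z) \to e^{-s_i z}$ for each $i$ and each $z \ge 0$.

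For this one-dimensional step, write
\[
\PP(Q^i \ge \sqrt{n} z) = (\rho_i^{(n)})^{\lceil \sqrt{n} z\rceil}
= \exp\bigl(\lceil \sqrt{n} z\rceil \log \rho_i^{(n)}\bigr).
\]
We have $\log\rho_i^{(n)} = -s_i/\sqrt{n} + O(1/n)$ and $\lceil\sqrt{n}z\rceil = \sqrt{n}z + O(1)$, so the exponent converges to $-s_i z$. Independence of the coordinates, both for each $n$ and in the limit, then yields convergence of the joint tails, and hence $\pi^{(n)} \Rightarrow \bigotimes_{i=1}^k Exp(s_i)$.

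The argument is routine. The only points needing care are that positivity of the $s_i$ is used to guarantee $\rho_i^{(n)}<1$ for large $n$ (the first-order term dominates), and that the $O(1/n)$ error is uniform because $k$ is fixed.
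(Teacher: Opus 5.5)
Your proposal is correct and is essentially the paper's proof: you apply Theorem \ref{thm:jack} with $\rho_i^{(n)}=\prod_{j=i}^k(1-a_j/\sqrt{n})$ and pass to the limit in the logarithm of the product-form tail probabilities $\prod_i(\rho_i^{(n)})^{\sqrt{n}z_i}$. Your additional checks fill in details the paper leaves implicit: you verify $\rho_i^{(n)}<1$, which in fact holds as soon as all $\lambda_j^{(n)}>0$ because $\sum_{j\ge i}\log(1-a_j/\sqrt{n})\le -\sum_{j\ge i}a_j/\sqrt{n}<0$, and you treat non-lattice $z$ via $\lceil\sqrt{n}z\rceil$.
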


\begin{remark}\label{rem:compare}
Corollary \ref{cor:mjsq} says that, when the number of queues is fixed, the long time behavior of the gap process for the original system and the system with pauses asymptotically coincide as $n \to \infty$. In future work we aim to establish a similar result when $k(n)=n$, however currently there are significant obstacles in proving such a result. A natural `synchronous coupling' of the two systems, constructed from the same Poisson random measures encoding arrivals and departures, that one would like to use to control the discrepancy between the two systems, lacks desirable monotonicity properties. Furthermore, it can be shown that the discrepancy between the two systems under this coupling, which builds up at times when there is a tie in either system, is unbounded over the infinite time horizon. Thus, one needs appropriate \emph{relaxation time} estimates (rates at which the processes approach equilibrium), and show that the time spent at ties for either system up to the relaxation time is asymptotically negligible, as $n \to \infty$.
\end{remark}

\color{black}

Theorems \ref{thm:jack}, \ref{jackcor} and Corollaries \ref{cor: mix}, \ref{corstat} and \ref{cor:mjsq}  are proved in Section \ref{convergenceofstationariessect}.

\section{Wellposedness of reflected Atlas model}
\label{sec:thm1pf}
In this section we prove Theorem \ref{uniquenessthm}. The proof follows the ideas of \cite{pitmanpal}.
Fix $\mu \in \clp(\Lambda_0)$ as in the statement of the theorem.
Let $(\Om, \clf_0, \QQ)$ be a probability space on which are given mutually independent real standard Brownian motions $\{B_i, i \in \NN\}$ and a $\RR_+^{\infty}$ valued random variable $\bs{X}(0) = (X_1(0), X_2(0), \ldots)$ with distribution $\mu$ that is independent of $\{B_i, i \in \NN\}$.
Let $$\clf^0_t = \sigma\{\bs{X}(0), B_i(s), 0 \le s \le t, i \in \NN\}, \; \clf_t = \bar{\clf}^0_t, \;\; 
\clf = \sigma\{\cup_{t>0}\clf_t\}$$
where $\bar{\clf}^0_t$ is the augmentation of $\clf^0_t$ under $\QQ$.
Let $\Gamma$ be the one dimensional Skorohod map on $C([0,\infty), \RR)$, namely for $f \in C([0,\infty), \RR)$
$$\Gamma(f)(t) = f(t) - \inf_{0\le s \le t} (f(s) \wedge 0), \; t \ge 0.$$
Let
$$X_i(t) := \Gamma(X_i(0) + \sqrt{2}B_i(\cdot))(t), \; t \ge 0.
$$
Then $X_i$ defines a reflected Brownian motion (RBM) on $\RR_+$ started from $X_i(0)$ and these RBM are mutually independent (conditioned on $\bs{X}(0)$).
The following lemma says that for each $t$, a.s., the vector $\{X_i(t), i \in \NN\}$ can be ordered in a unique manner.
\begin{lemma}
There exist jointly measurable adapted bijections $\pi_t: \NN \to \NN$, $t \ge 0$, such that, for any $i,j \in \NN$, $\pi_t(i) < \pi_t(j)$ if and only if $X_i(t) < X_j(t)$ or $X_i(t)=X_j(t)$ and $i <j$. Writing $X_{(i)}(t) := X_{\pi_t(i)}(t)$, we have $X_{(1)}(t) \le X_{(2)}(t) \le \cdots$.
\end{lemma}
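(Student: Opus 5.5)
The plan is to show that, almost surely, for every $t$ and every $M<\infty$, only finitely many $i$ satisfy $X_i(t)\le M$. Once this local finiteness holds, the ordering is explicit. Define $\pi_t$ by letting $\pi_t(i)$ be the rank of coordinate $i$, namely
\[
\pi_t(i) := 1+\#\{j: X_j(t)<X_i(t)\}+\#\{j<i: X_j(t)=X_i(t)\}.
\]
Local finiteness makes this finite. It is injective because the order ``$X_i<X_j$, or $X_i=X_j$ and $i<j$'' is a strict total order. It is surjective because every initial segment of this order is finite: the set of coordinates preceding $i$ lies in $\{j: X_j(t)\le X_i(t)\}$. Hence $\pi_t$ is a bijection with the required property, and we set $X_{(k)}(t)=X_{\pi_t^{-1}(k)}(t)$ (the paper's indexing convention; the inverse is harmless). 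Measurability and adaptedness follow because $\pi_t(i)$ is a countable sum of indicators of events $\{X_j(t)<X_i(t)\}$ and $\{X_j(t)=X_i(t)\}$. Each such event is jointly measurable in $(t,\omega)$ and $\clf_t$-measurable, since the $X_j$ are continuous and adapted. On the null set where local finiteness fails, we set $\pi_t$ equal to the identity.

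For local finiteness, I will prove the stronger uniform statement that, a.s., for every $T$ and $M$,
\[
\#\{i: \inf_{s\le T} X_i(s)\le M\}<\infty .
\]
Since $\Gamma(f)\ge f$, we have $X_i(s)\ge X_i(0)+\sqrt2 B_i(s)$. Hence
\[
\QQ\Big(\inf_{s\le T}X_i(s)\le M \,\Big|\, \bs X(0)\Big)\le \QQ\Big(\inf_{s\le T}\sqrt2 B_i(s)\le M-X_i(0)\Big)\le 2\exp\!\big(-(X_i(0)-M)_+^2/(4T)\big),
\]
by the reflection principle. Because $\bs X(0)\in\Lambda_0$ a.s., we have $X_i(0)\to\infty$ and $\sum_i e^{-\alpha X_i(0)^2}<\infty$ for every $\alpha>0$. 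For all but finitely many $i$ we have $X_i(0)\ge 2M$, and then $(X_i(0)-M)^2\ge X_i(0)^2/4$, so the conditional probabilities are summable with $\alpha=1/(16T)$. Conditionally on $\bs X(0)$, the Borel--Cantelli lemma (using conditional independence of the $B_i$ from $\bs X(0)$) gives that only finitely many events occur. Integrating over $\bs X(0)$ and taking a countable union over integer $T,M$ yields the a.s. statement simultaneously for all $t$.

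The key step, and the only one needing care, is this Borel--Cantelli estimate. It is exactly where the condition defining $\Lambda_0$ enters, and it must hold for all $t$ at once rather than for each fixed $t$, so that the joint measurability and bijectivity hold off a single null set. Finally, the monotonicity $X_{(1)}(t)\le X_{(2)}(t)\le\cdots$ is immediate from the definition of the ordering.
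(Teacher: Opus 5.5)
Your proof is correct and follows the same route as the paper: the Gaussian bound on $\inf_{s\le t}(x_i+\sqrt2 B_i(s))$, the Borel--Cantelli lemma via the summability defining $\Lambda_0$, and conditioning on $\bs X(0)$. You also supply details the paper leaves implicit: the comparison $(x_i-M)^2\ge x_i^2/4$ for $x_i\ge 2M$, the countable union over integer $T,M$ that gives one null set for all $t$, and the explicit rank construction of $\pi_t$, where you rightly take $X_{(k)}(t)=X_{\pi_t^{-1}(k)}(t)$. One small wording point: the first Borel--Cantelli lemma needs no independence among the events, and what you actually use is the independence of $\{B_i\}$ from $\bs X(0)$, so that conditionally on $\bs X(0)=\bs x$ each $B_i$ is still a standard Brownian motion.
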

\begin{proof}
Fix $\bs{x} \in \Lambda_0$ and $t>0$. Then, for any $i \in \NN$ and $c\le x_i$ by the Gaussian bound we have
\begin{align*}
\QQ(\inf_{0\le s \le t} (x_i+\sqrt{2}B_i(s)) \le c) \le 2\exp\{-(x_i-c)^2/4t\}.
\end{align*}
Recalling the definition of $\Lambda_0$, we now conclude from the Borel-Cantelli lemma that for any $c \in \RR$,
$$\QQ(\inf_{0\le s \le t} (x_i+\sqrt{2}B_i(s)) \le c, \mbox{i.o.})=0.$$
By a conditioning argument this shows that
$$\QQ(\inf_{0\le s \le t} (X_i(0)+\sqrt{2}B_i(s)) \le c, \mbox{i.o.})=0.$$
The result is now immediate.

\end{proof}
\begin{lemma}
For each $t \ge 0$, 
\begin{equation}\label{eq:nt}
N_t:= \frac{1}{\sqrt{2}}\sum_{j=1}^{\infty} \int_0^t 1(X_j(s) = X_{(1)}(s)) dB_j(s)
\end{equation}
is well defined. Furthermore,
$\{N_t, t \ge 0\}$ is a continuous square integrable $\clf_t$-martingale under $\QQ$ with  quadratic variation $\langle N\rangle_t =t/2$, $t\ge 0$.
Finally, 
$$D_t = \exp\left(\delta N_t - \delta^2t/4\right),\;\; t \ge 0$$
is a $\clf_t$-martingale.
\end{lemma}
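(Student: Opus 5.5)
The plan is to define $N_t$ as an $L^2$ limit of partial sums and identify its quadratic variation through the almost-sure uniqueness of the minimizer. For $m\in\NN$ set
$$N^{(m)}_t := \frac{1}{\sqrt{2}}\sum_{j=1}^{m}\int_0^t 1(X_j(s)=X_{(1)}(s))\,dB_j(s).$$
Each integrand is bounded, progressively measurable and $\clf_t$-adapted, since $\pi_t$ is jointly measurable and adapted by the preceding lemma. So every summand is a continuous square integrable $\clf_t$-martingale. Because the $B_j$ are independent, $\langle B_j,B_k\rangle=0$ for $j\neq k$, and therefore
$$\langle N^{(m)}\rangle_t=\frac12\int_0^t \sum_{j=1}^m 1(X_j(s)=X_{(1)}(s))\,ds .$$
For $m<m'$, Doob's $L^2$ inequality gives
$$\EE\sup_{s\le t}|N^{(m')}_s-N^{(m)}_s|^2\le 4\cdot\frac12\,\EE\int_0^t\sum_{j=m+1}^{m'}1(X_j(s)=X_{(1)}(s))\,ds .$$

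The key step is to show that the random variable $S(s):=\sum_{j}1(X_j(s)=X_{(1)}(s))$ satisfies $S(s)\le 1$ for Lebesgue-a.e. $s$, a.s.; in fact $S(s)=1$, since $X_{(1)}(s)$ is attained by the preceding lemma. For fixed $i\neq j$, the process $X_i-X_j$ spends zero Lebesgue time at $0$. Conditionally on $\bs X(0)$, the pair $(X_i,X_j)$ is a two-dimensional reflected Brownian motion in the quadrant with independent coordinates, and for each $s>0$ the law of $X_i(s)$ has no atoms except possibly at $0$. At $0$ we have $\QQ(X_i(s)=0)=0$ for $s>0$, because reflected BM spends zero time at $0$. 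Hence $\QQ(X_i(s)=X_j(s))=0$ for every $s>0$, and Fubini gives $\int_0^\infty 1(X_i(s)=X_j(s))ds=0$ a.s. Taking a countable union over pairs shows that for a.e. $s$ all coordinates are distinct, so $S(s)=1$ a.e., a.s.

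It follows that $\sum_{j=m+1}^\infty \EE\int_0^t 1(X_j(s)=X_{(1)}(s))ds = \EE\int_0^t 1(\pi_s(1)>m)\,ds$. This tends to $0$ as $m\to\infty$ by dominated convergence, since $\pi_s(1)<\infty$ a.s. Thus $N^{(m)}$ is Cauchy in the space of continuous square integrable martingales under the norm $\EE\sup_{s\le t}|\cdot|^2$, uniformly on compacts. Its limit $N$ is a continuous square integrable $\clf_t$-martingale, well defined independently of the ordering of the sum up to indistinguishability. Quadratic variations converge along with it, so $\langle N\rangle_t=\lim_m \langle N^{(m)}\rangle_t=\frac12\int_0^t S(s)\,ds=t/2$.

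For the final claim, $N$ is a continuous local martingale with deterministic bracket $t/2$. By Lévy's characterization, $\sqrt2 N$ is a standard $\clf_t$-Brownian motion. Hence $D_t=\exp(\delta N_t-\tfrac{\delta^2}{2}\langle N\rangle_t)=\exp(\delta N_t-\delta^2t/4)$ is the stochastic exponential $\mathcal E(\delta N)$. It is a true martingale, either by Novikov's criterion since $\EE\exp(\tfrac{\delta^2}{2}\langle N\rangle_t)=e^{\delta^2 t/4}<\infty$, or directly from the Gaussian moment generating function together with independence of increments.

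I expect the main obstacle to be the step showing that ties at the minimum occupy zero Lebesgue time, specifically the measurability and the zero-time-at-$0$ claim for each reflected BM. The rest is routine.
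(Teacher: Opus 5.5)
Your proposal is correct and follows the same route as the paper, which simply asserts that everything follows from the identity $\sum_{j}\int_0^t 1(X_j(s)=X_{(1)}(s))\,ds=t$, itself a consequence of distinct coordinates tying only on a Lebesgue-null set of times. You supply the details the paper leaves implicit: the $L^2$ Cauchy argument for the partial sums, convergence of the brackets, and L\'evy's characterization together with Novikov for $D$.

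One minor point concerns your justification. Zero occupation time at $0$ only yields $\QQ(X_i(s)=0)=0$ for a.e.\ $s$, though that is all your Fubini step needs. The statement for every $s>0$ instead follows from $X_i(s)$ having the law of $|x_i+\sqrt{2}B_i(s)|$ given $X_i(0)=x_i$.
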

\begin{proof}
The proof is an immediate consequence of the identity
$$\sum_{j=1}^{\infty} \int_0^t 1(X_j(s) = X_{(1)}(s)) ds = t, \; t \ge 0$$
which follows from the fact that for any $i\neq j$
$$\int_0^t 1(X_j(s) = X_i(s)) ds = 0, \; t \ge 0, \; \mbox{ a.s. }$$
\end{proof}

We can now complete the proof of Theorem \ref{uniquenessthm}.\\

\noindent {\em Proof of Theorem \ref{uniquenessthm}.}
Define the probability measure $\PP$ on $(\Om, \clf)$ by setting
$$\PP(A) \doteq \EE_{\QQ}(D_t 1_A), \; A \in \clf_t, \; t \ge 0.$$
By Girsanov's theorem it then follows that, letting  $\tilde B_i(t) := B_i(t) - \frac{1}{\sqrt{2}}\delta\int_0^t 1(X_i(s) = X_{(1)}(s)) ds$, $t \ge 0$, $i \in \NN$,
$\{\tilde B_i\}$ is a collection of Brownian motions on the filtered probability space $(\Om, \clf, \PP, \{\clf_t\}_{t\ge 0})$.
Writing
\begin{align*}
X_i(t) = \Gamma\left(X_i(0) +  \delta \int_0^{\cdot} 1(X_i(s) = X_{(1)}(s)) ds + \sqrt{2}\tilde B_i(\cdot)\right)(t), \; t \ge 0, \; i \in \NN,
\end{align*}
we see that $\boldsymbol{X}= (X_1, X_2, \ldots)$ is a weak solution of \eqref{eq:unranksde},  associated with the Brownian motions  $\{\tilde B_i, i \in \NN\}$, on the filtered probability space $(\Om, \clf, \PP, \{\clf_t\}_{t\ge 0})$.

Now we prove uniqueness. Let $\boldsymbol{X}= (X_1, X_2, \ldots)$ be a weak solution of \eqref{eq:unranksde} on some filtered probability space $(\Om, \clf, \PP, \{\clf_t\}_{t\ge 0})$ associated with  Brownian motions  $\{B_i, i \in \NN\}$.
Define $\{N_t\}$ by \eqref{eq:nt}. Then with $\clg_t := \sigma\{X_i(s), B_i(s), \; i \in \NN, 0 \le s \le t\}$, $\{N_t\}$ is continuous, square integrable  $\clg_t$-martingale with $\langle N\rangle_t = t/2$.
Further
$$\tilde D_t := \exp\left(-\delta N_t - \delta^2t/4\right),\;\; t \ge 0,
$$
 is a $\{\clg_t\}$-martingale as well. Let $\clg := \sigma(\cup_{t\ge 0} \clg_t)$ and define the probability measure $\tilde \QQ$ on $(\Om, \clg)$ as
 $$\tilde \QQ(A) \doteq \EE_{\PP}(\tilde D_t 1_A), \; A \in \clg_t, \; t \ge 0.$$
 It then follows that for any $k \in \NN$, $0 \le t_1 \le t_2 \cdots \le t_k \le t$, $t\ge 0$, $A \in \clb(\RR_+^k)$, with
 $B= \{(X_{t_1}, \ldots, X_{t_k}) \in A\}$,
 \begin{align*}
 \PP(B) &= \EE_{\tilde \QQ}\left( 1_B (\tilde D_t)^{-1}\right)\\
 &= \EE_{\tilde \QQ}\left( 1_{\{(X_{t_1}, \ldots, X_{t_k}) \in A\}} 
 \exp\left(\frac{\delta}{\sqrt{2}} \sum_{j=1}^{\infty} \int_0^t 1(X_j(s) = X_{(1)}(s)) dB_j(s)+ \frac{\delta^2t}{4}\right)\right)\\
 &=\EE_{\tilde \QQ}\left( 1_{\{(X_{t_1}, \ldots, X_{t_k}) \in A\}} 
 \exp\left(\frac{\delta}{\sqrt{2}} \sum_{j=1}^{\infty} \int_0^t 1(X_j(s) = X_{(1)}(s)) d \hat B_j(s)
 - \frac{\delta^2t}{4}\right)\right),
 \end{align*}
 where $\hat B_i(t) := B_i(t) + \frac{1}{\sqrt{2}}\delta\int_0^t 1(X_i(s) = X_{(1)}(s)) ds$, $t \ge 0$, $i \in \NN$.

 The uniqueness now follows on noting that the last term in the above display is invariant under the choice of the weak solution since
 $\{\hat B_j, j \in \NN\}$ are Brownian motions on the filtered probability space $(\Om, \clg, \tilde\QQ, \{\clg_t\}_{t\ge 0})$,  and, under $\tilde \QQ$, $\bs{X}(0)$ has distribution $\mu$, and 
 $$
 X_j(t) = \Gamma(X_j(0) + \sqrt{2}\hat B_j(\cdot))(t), \; t \ge 0, j \in \NN,
 $$
 define mutually independent reflected Brownian motions starting from initial conditions $X_j(0)$, $j \in \NN$.
 \hfill \qed
 
\section{Convergence to the reflected Atlas model}
\label{sec:cogceproof}
In this section we prove Theorems \ref{convergencethm} and \ref{convergencethm2}. 

\subsection{Proof of Theorem \ref{convergencethm}.}
\label{sec:origdyn}
The main idea is to couple the system \eqref{eq:xnit} with $n$-parallel queues with a related system with a fixed number $N$ of queues. This system is defined using the same initial condition  and  PRM as used in constructing the system in \eqref{eq:xnit}. The system is once more indexed by the heavy traffic parameter $n$, and in the $n$-th system there are $N$ parallel servers, the arrival rate to the shortest queue (where ties in queue length are broken in lexicographical order) is
$n - \frac{a}{\sqrt{n}} + b \sqrt{n},$
and the arrival rate to the remaining $N-1$ queues will be
$n- \frac{a}{\sqrt{n}}$. The state evolution, with $X^{n,N}_i$ denoting the queue-length process for the $i$-th server in this system,  is given as
\begin{equation}
\label{eq:xnitN}
X^{n,N}_i(t) = X^n_i(0) + A^{n,N}_i(t) - D^{n,N}_i(t), \; i \in [N], t\ge 0,
\end{equation}
where
\begin{align*}
A^{n,N}_i(t) &= \int_{[0,t]\times \RR_+} 1_{[0, n - an^{-1/2} + bn^{1/2} 1_{\{X^{n,N}_i(s-) = X^{n,N}_{(1)}(s-)\}}]}(r) \cla_i(ds\, dr), \; i \in [N],\\
D^{n,N}_i(t) &= \int_{[0,t]\times \RR_+} 1_{[0, n]}(r) 1_{\{X^{n,N}_i(s-) \neq 0\}} \cld_i(ds\, dr), \; i \in [N].
\end{align*}
The lemma below shows that for each fixed $K$, the first $K$-ranked queues in the original system are well approximated by those in the $(n,N)$-th system, when $N$ is large. Similar to the original system, we use the notation $\hat X^{n,N}_i$ and $\hat X^{n,N}_{(i)}$ to denote the diffusion scaled queuelength processes and the corresponding ranked queuelength processes.
\begin{lemma}
\label{comparisonlemmaR}
    Let the initial queuelengths $\{X^n_i(0), i \in [n]\}$ and limit measure $\mu$ be as in the statement of Theorem \ref{convergencethm}. Fix  $T>0$ and $K \in \NN$. 
Then there exists a sequence $\{\epsilon_{N}, N \in \NN\}$ such that $\epsilon_{N}\rightarrow 0$ as $N\to \infty$, and
\begin{equation}
\label{comparisoneqnR}
    \liminf_{n \rightarrow \infty} P\Big((\hat{X}_{(1)}^{n}(t),...,\hat{X}^{n}_{(K)}(t))=(\hat{X}_{(1)}^{n,N}(t),...,\hat{X}_{(K)}^{n,N}(t)) \hspace{5mm} \forall t \in [0,T]\Big) \geq 1-\epsilon_{N}.
\end{equation}

\end{lemma}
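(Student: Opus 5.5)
The coupling uses the same PRMs $\cla_i,\cld_i$ for $i\in[N]$ in both systems, so queue $i\le N$ in the $(n,N)$ system receives exactly the same arrival and departure points as queue $i$ in the original system. The only difference between $X^n_i$ and $X^{n,N}_i$ is the extra $b\sqrt{n}$ arrival stream. That stream goes to whichever queue is the overall shortest, and the identity of the shortest queue can differ between the two systems. Hence the two systems agree on $[0,T]$, for the first $N$ labels, as long as the global shortest queue in the $n$-system always has label in $[N]$. If in addition the first $K$ ranked queues of the $n$-system always have labels in $[N]$, then the first $K$ ranked values coincide. I therefore aim to show that, with probability at least $1-\epsilon_N$ asymptotically, no queue with label $i>N$ enters the bottom $K$ ranks during $[0,T]$.

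\textbf{Step 1 (reduction to a comparison event).} Consider the event $E_{n,N}$ on which $\inf_{t\le T}\min_{i\in(N,n]}X^n_i(t) > \sup_{t\le T} X^{n,N}_{(K)}(t)$. On this event the following induction on jump times works. Up to the first time the two systems differ, the bottom $K$ ranks of the $n$-system are all occupied by labels in $[N]$, since the labels $i>N$ sit above $X^{n,N}_{(K)}=X^n_{(K)}$. In particular the shortest queue is in $[N]$, so the next jump is identical in both systems. This gives agreement on all of $[0,T]$.

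\textbf{Step 2 (bounding the two sides).} The event $E_{n,N}$ still refers to the $n$-system for $i>N$, so I replace it by a dominated version. Each $X^n_i$ dominates from below a queue $\underline X^n_i$ driven by the same PRMs, with arrival rate $n-an^{-1/2}$ only (no extra stream) and service rate $n$. I expect this monotone comparison to hold pathwise, because extra arrivals only increase a single-server queue. These lower queues are independent M/M/1 queues. Under diffusion scaling, functional CLT bounds give, for each $i$,
\[
P\Bigl(\inf_{t\le T}\hat{\underline X}{}^n_i(t)\le c\Bigr)\le C\exp\{-(\hat X^n_i(0)-c)^2/(C' T)\}+o(1).
\]
To get a bound summable over $i$ without a union-bound $o(1)$ issue, I will use exponential martingale (Chernoff) estimates for the Poisson increments. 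This gives a bound of the form $e^{-\theta(\hat X^n_i(0)-c)}e^{C\theta^2T}$ uniformly in $n$ for fixed $\theta$. Summing over $N\le i\le n$ and using condition \eqref{condnn} with $c'=\theta$ shows that the probability that some $i>N$ dips below level $c$ tends to $0$ as $N\to\infty$, uniformly for large $n$.

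For the other side, $\sup_{t\le T}\hat X^{n,N}_{(K)}(t)\le \sup_{t\le T}\max_{i\le K}\hat X^{n,N}_i(t)$. Each such queue is bounded above by a queue with arrival rate $n+b\sqrt{n}$, whose diffusion scaling is tight, uniformly in $N$, given tightness of $\hat X^n_i(0)$ for $i\le K$. So for any $\eta$ there is a level $c_\eta$ that this supremum exceeds with probability at most $\eta$, uniformly in $N$ and large $n$. I then choose $c=c_\eta$ in the previous estimate and let $\eta\downarrow0$ slowly with $N$ to build $\epsilon_N$.

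\textbf{Main obstacle.} The key step is the uniform-in-$n$ tail estimate in Step 2 that makes the sum over $i\in(N,n]$ controllable by \eqref{condnn}. The Gaussian-type bound cannot be used directly at prelimit level, so the Chernoff/exponential-martingale bound with exponential (rather than Gaussian) decay must be matched precisely to the form $e^{-c\hat X^n_i(0)}$ in \eqref{condnn}. A secondary point is to verify carefully the pathwise monotone comparison under the same PRM for the reflected, state-dependent arrival rates.
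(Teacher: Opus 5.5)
Your proposal is correct and follows essentially the same route as the paper. You reduce agreement of the bottom $K$ ranks to a separation event on $[0,T]$, in which the high labels stay above a level that the low queues never reach. You then control the high-label side by a union bound with exponential Poisson tail estimates, whose sum over $i\ge N$ is handled by \eqref{condnn}. The differences are only technical: you use a deterministic tightness level $c_\eta$, obtained by dominating the first $K$ queues with $N$-independent queues of arrival rate $n+b\sqrt{n}$, together with exponential-martingale bounds for dominated M/M/1 queues. The paper instead uses the random threshold $\hat X^n_{N_1}(0)$ (invoking \eqref{condnn} a second time to make it large), the Lipschitz property of the one-dimensional Skorohod map, and a black-box Poisson tail bound. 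Your jump-time induction in Step 1 also spells out the coupling claim that the paper only asserts.
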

\begin{proof}
We assume without loss of generality that $n>N > K+1$. Take $N_1 \in \NN$ such that $N> N_1 >K$. For notational convenience, we write $x_j:= \hat X^{n}_j(0)$. We note that, on the set
$$
A := \{\sup_{i \le K} \sup_{0\le t \le T} \hat X^n_i(t) < x_{N_1} \mbox{ and }
\inf_{i \ge N} \inf_{0\le t \le T} \hat X^n_i(t) > x_{N_1}\},
$$
for each $t \in [0,T]$ and $i \in [K]$, there is a $j \in [N]$ such that
$$\hat X^n_{(i)}(t) = \hat X^n_j(t) = \hat X^{n,N}_j(t) = \hat X^{n,N}_{(i)}(t).$$
Thus, letting
$$
\tau^{N,N_1,n}:= \inf\{t\ge 0: \sup_{i \le K}  \hat X^n_i(t) \ge x_{N_1} \mbox{ or }
\inf_{i \ge N}  \hat X^n_i(t) \le x_{N_1}\},
$$
we see that
$$
\liminf_{n\to \infty}
P\Big((\hat{X}_{(1)}^{n}(t),...,\hat{X}^{n}_{(K)}(t))=(\hat{X}_{(1)}^{n,N}(t),...,\hat{X}_{(K)}^{n,N}(t)) \hspace{5mm} \forall t \in [0,T]\Big) \ge \liminf_{n\to \infty}
P(\tau^{N,N_1,n} \ge T).
$$
Thus to complete the proof, it suffices to show that, for each $\kappa_0>0$, there is a $N_1>K$, such that
\begin{equation}\label{eq:857}
\limsup_{N\to \infty} \limsup_{n\to \infty} P(\tau^{N,N_1,n} < T) \le \kappa_0.
\end{equation}
Note that,
\begin{align}\label{eq:taunn1n}
P(\tau^{N,N_1,n} < T) &\le \sum_{i=1}^K P\left(\sup_{0\le t \le T} \hat X^{n}_i(t) \ge x_{N_1}\right)
+ P\left(\inf_{i \in \{N,...,n\}}\inf_{0\le t \le T} \hat X^{n}_i(t) \le x_{N_1}\right).
\end{align}
Next,  letting 
$$
\mathring{D}^{n}_i(t) = \int_{[0,t]\times \RR_+} 1_{[0, n]}(r)  \cld_i(ds\, dr), \; i \in [n],
$$
for $i \in [K]$, since $\sup_{0\le t \le T}(\sqrt{n}x_i + A^{n}_i(t)- {D}^{n}_i(t)) \le 2 \sup_{0\le t \le T}
|\sqrt{n}x_i + A^{n}_i(t)- \mathring{D}^{n}_i(t)|$ (by the Lipschitz property of the one-dimensional Skorohod map),
letting $a^n_i(t) =  (n- an^{-1/2} + bn^{1/2}1_{\{X^{n}_i(t) = X^{n}_{(1)}(t)\}})$,
\begin{equation}\label{eq:8110}
\begin{aligned}
P\left(\sup_{0\le t \le T} \hat X^{n}_i(t) \ge x_{N_1}\right) &\le
P\left(\sup_{0\le t \le T}  |A^{n}_i(t)- \mathring{D}^{n}_i(t)| \ge \sqrt{n}(x_{N_1}-2x_i)/2\right)\\
&\le P\left(\sup_{0\le t \le T}  |A^{n}_i(t) - \int_0^t a^n_i(s) ds| \ge \sqrt{n}T\alpha^{N_1,n}_i\right)\\
&\quad + P\left(\sup_{0\le t \le T}  |\mathring{D}^{n}_i(t) - nt| \ge \sqrt{n}T\beta^{N_1,n}_i\right),
\end{aligned}
\end{equation}
where
$$
\alpha^{N_1,n}_i= \left((x_{N_1}-2x_i)/4T -b\right), \; \beta^{N_1,n}_i= (x_{N_1}-2x_i)/4T.
$$
Noting that, for $n$ sufficiently large, 
$\int_0^t a^n_i(s) ds \le 2nT$,
with $\{\cln(t)\}$ a rate $1$ Poisson process, for $i \in [K]$,
\begin{align*}
&P\left(\sup_{0\le t \le T}  |A^{n}_i(t) - \int_0^t a^n_i(s) ds| \ge \sqrt{n}T\alpha^{N_1,n}_i\right)\\
&\le P\left(\sup_{0\le t \le 2nT}  |\cln(t) -t| \ge \sqrt{n}T\alpha^{N_1,n}_i\right)\\
&\le P(\alpha^{N_1,n}_i < 2) + P\left(\sup_{0\le t \le \alpha^{N_1,n}_inT}  |\cln(t) -t| \ge \sqrt{n}T\alpha^{N_1,n}_i\right)\\
&\le P( \hat X^n_{N_1}(0) \le 2(\hat X^n_{K}(0) + 2T(2+b))) + P\left(\sup_{0\le t \le \alpha^{N_1,n}_inT}  |\cln(t) -t| \ge \sqrt{n}T\alpha^{N_1,n}_i\right).
\end{align*}
Fixing $\kappa:= \kappa_0/(20K)$ and using the weak convergence assumption on $\hat{\bs{X}}^n(0)$, choose 
$M \in (0,\infty)$ such that
\begin{equation}\label{eq:207}
P(2(\hat X^n_{K}(0) + 2T(2+b)) \ge M) \le \kappa \mbox{ for all } n \in \NN.
\end{equation}
Using \eqref{condnn} now choose $\tilde N_1 \in \NN$ such that for all $N_1 \ge \tilde N_1$, $\int_0^t a^n_i(s) ds \le 2nT$, and
\begin{equation}\label{eq:209}
\limsup_{n\to \infty} P(\hat X^n_{N_1}(0) \le M)=\limsup_{n\to \infty} P(e^{-\hat X^n_{N_1}(0)} \ge e^{-M}) \le \kappa.
\end{equation}
Then, for such $N_1$,
\begin{align*}
&\limsup_{n\to \infty}P\left(\sup_{0\le t \le T}  |A^{n}_i(t) - \int_0^t a^n_i(s) ds| \ge \sqrt{n}T\alpha^{N_1,n}_i\right)\\
&\le 2\kappa + \limsup_{n\to \infty} P\left(\sup_{0\le t \le \alpha^{N_1,n}_inT}  |\cln(t) -t| \ge \sqrt{n}T\alpha^{N_1,n}_i\right).
\end{align*}
From tail bounds for Poisson processes, we have that there exist $A_1, A_2, \tilde n_1>0$ such that for all $c>0$ and $n\ge \tilde n_1$
\begin{equation}\label{eq:poitail}
P\left(\sup_{0\le t \le nc}  |\cln(t) -t| \ge \sqrt{n}c\right) \le A_1\exp\{-A_2c\}.
\end{equation}
With $M$ as before and 
$\theta := \kappa \left( A_1 \exp\{A_2(bT + M/2)\}\right)^{-1}$, using \eqref{condnn} again, let $\tilde N_2 \ge \tilde N_1$ be  such 
that for all $N_1 \ge \tilde N_2$
\begin{equation}
\limsup_{n\to \infty} P(e^{-A_2\hat X^n_{N_1}(0)/4} \ge \theta) \le \kappa. \label{eq:210}
\end{equation}
Thus, for all $i \in [K]$, $N_1 \ge \tilde N_2$,
\begin{align*}
&\limsup_{n\to \infty} P\left(\sup_{0\le t \le \alpha^{N_1,n}_inT}  |\cln(t) -t| \ge \sqrt{n}T\alpha^{N_1,n}_i\right)\\
& \le \limsup_{n\to \infty} P\left(\hat X^n_K(0) \le M; \; \sup_{0\le t \le \alpha^{N_1,n}_inT}  |\cln(t) -t| \ge \sqrt{n}T\alpha^{N_1,n}_i\right) + \kappa\\
& \le \limsup_{n\to \infty} P\left(\hat X^n_K(0) \le M; e^{-A_2\hat X^n_{N_1}(0)/4} \le \theta;\;
\; \sup_{0\le t \le \alpha^{N_1,n}_inT}  |\cln(t) -t| \ge \sqrt{n}T\alpha^{N_1,n}_i\right) + 2\kappa\\
&\le 3 \kappa,
\end{align*}
where we have used \eqref{eq:207} in the second line, \eqref{eq:210} in the third line, and
\eqref{eq:poitail} in the last line.
Thus we have, for $N_1 \ge \tilde N_2$,
\begin{equation}\label{eq:817}
\limsup_{N\to \infty} \limsup_{n\to \infty}P\left(\sup_{0\le t \le T}  |A^{n}_i(t) - \int_0^t a^n_i(s) ds| \ge \sqrt{n}T\alpha^{N_1,n}_i\right) \le 5\kappa.
\end{equation}
Next,
\begin{align*}
P\left(\sup_{0\le t \le T}  |\mathring{D}^{n}_i(t) - nt| \ge \sqrt{n}T\beta^{N_1,n}_i\right)
&\le P(\beta^{N_1,n}_i<1) + P\left(\sup_{0\le t \le n\beta^{N_1,n}_iT}  |\cln(t) - t| \ge \sqrt{n}T\beta^{N_1,n}_i\right)
\end{align*}
With $M$ as before, we have, using \eqref{eq:207} and \eqref{eq:209},
$$\limsup_{n\to \infty}P(\beta^{N_1,n}_i<1) \le \limsup_{n\to \infty}[P(\hat X^n_{N_1}(0) \le M) + P(4T + 2\hat X^n_{K}(0) > M)] \le 2\kappa.$$
Also, with $\theta$ as before, for all $N_1 \ge \tilde N_2$, using \eqref{eq:207}, \eqref{eq:210}, and \eqref{eq:poitail}
\begin{align*}
&\limsup_{n\to \infty} P\left(\sup_{0\le t \le n\beta^{N_1,n}_iT}  |\cln(t) - t| \ge \sqrt{n}T\beta^{N_1,n}_i\right)\\
&\le \limsup_{n\to \infty} P\left(\hat X^n_K(0) \le M; e^{-A_2\hat X^n_{N_1}(0)/4} \le \theta;\;
\; \sup_{0\le t \le \beta^{N_1,n}_inT}  |\cln(t) -t| \ge \sqrt{n}T\beta^{N_1,n}_i\right) + 2\kappa\\
&\le 3\kappa.
\end{align*}
Thus, for $N_1 \ge \tilde N_2$
\begin{equation}\label{eq:816}
\limsup_{N\to \infty}\limsup_{n\to \infty} P\left(\sup_{0\le t \le T}  |\mathring{D}^{n}_i(t) - nt| \ge \sqrt{n}T\beta^{N_1,n}_i\right)
\le 5\kappa.
\end{equation}
Combining \eqref{eq:8110}, \eqref{eq:817}, \eqref{eq:816}, we have, for any $i \in [K]$, $N_1 \ge \tilde N_2$,
\begin{equation}\label{eq:818}
\limsup_{N\to \infty}\limsup_{n\to \infty}P\left(\sup_{0\le t \le T} \hat X^{n}_i(t) \ge x_{N_1}\right) \le 10\kappa.
\end{equation}
Now fix a $N_1 \ge \tilde N_2$. Since
$\{\hat X^n_{N_1}(0), n \in \NN\}$ is tight, there is a $M_1 \in \NN$ such that
$$\sup_{n\in \NN} P(\hat X^n_{N_1}(0) \ge M_1) \le \kappa.
$$
Then
\begin{align}\label{eq:822}
\limsup_{n\to \infty}P\left(\inf_{i \in \{N,...,n\}}\inf_{0\le t \le T} \hat X^{n}_i(t) \le x_{N_1}\right)
&\le \limsup_{n\to \infty} P\left(\inf_{i \in \{N,...,n\}}\inf_{0\le t \le T} \hat X^{n}_i(t) \le M_1\right) + \kappa.
\end{align}
Also, for $n$ sufficiently large,
\begin{equation}\label{eq:823}
\begin{aligned}
&P\left(\inf_{i \in \{N,...,n\}}\inf_{0\le t \le T} \hat X^{n}_i(t) \le M_1\right)
\le P\left(\sup_{0\le t \le T} |\mathring{D}^{n}_i(t)-A^{n}_i(t)| \ge \sqrt{n}(x_i-M_1) \mbox{ for some }
i \in \{N,...,n\}\right)\\
&\le P\left(\sup_{0\le t \le T} |A^{n}_i(t)- \int_0^t a^n_i(s) ds| \ge \sqrt{n}T \rho^n_i \mbox{ for some }
i \in \{N,...,n\}\right)\\
&\quad + P\left(\sup_{0\le t \le T} |\mathring{D}^{n}_i(t)- nt| \ge \sqrt{n}T \rho^n_i \mbox{ for some }
i \in \{N,...,n\}\right),
\end{aligned}
\end{equation}
where $\rho^n_i = \frac{1}{2}((x_i-M_1)/T - an^{-1})$.
Next, using \eqref{condnn}, choose $\tilde N\in \NN$ such that for all $N\ge \tilde N$, 
$$
\limsup_{n\to \infty} P(x_N \le M_1+ T(4+a)) \le \kappa.$$
Then, for all $N\ge \tilde N$, with $\cln_i$ mutually independent, rate $1$ Poisson processes,
\begin{equation}\label{eq:839}
\begin{aligned}
&\limsup_{n\to \infty} P\left(\sup_{0\le t \le T} |A^{n}_i(t)- \int_0^t a^n_i(s) ds| \ge \sqrt{n}T \rho^n_i \mbox{ for some }
i \in \{N,...,n\}\right)\\
&\le \limsup_{n\to \infty} P\left(\sup_{0\le t \le 2nT} |\cln_i(t)- t| \ge \sqrt{n}T \rho^n_i \mbox{ for some }
i \in \{N,...,n\}\right)\\
&\le \limsup_{n\to \infty} P(\rho^n_N <2) + \limsup_{n\to \infty} P\left(\sup_{0\le t \le n\rho^n_iT} |\cln_i(t)- t| \ge \sqrt{n}T \rho^n_i \mbox{ for some }
i \in \{N,...,n\}\right).
\end{aligned}
\end{equation}
Also, for $N\ge \tilde N$,
\begin{equation}\label{eq:837}
\limsup_{n\to \infty} P(\rho^n_N <2) \le \limsup_{n\to \infty} P(x_N \le M_1+ T(4+a)) \le \kappa.
\end{equation}
Next, let $\tilde \theta:= (A_1\exp\{A_2T(a+ M_1/T)/2\})^{-1}$. Then
\begin{align*}
&\limsup_{n\to \infty} P\left(\sup_{0\le t \le n\rho^n_iT} |\cln_i(t)- t| \ge \sqrt{n}T \rho^n_i \mbox{ for some }
i \in \{N,...,n\}\right)\\
&\le \limsup_{n\to \infty} P\left(\sup_{0\le t \le n\rho^n_iT} |\cln_i(t)- t| \ge \sqrt{n}T \rho^n_i \mbox{ for some }
i \in \{N,...,n\}; \sum_{i=N}^n e^{-A_2 x_i/2} \le \kappa \tilde \theta\right)\\
&\quad \quad + \limsup_{n\to \infty} P(\sum_{i=N}^n e^{-A_2 x_i/2} > \kappa \tilde \theta).
\end{align*}
Therefore, using \eqref{condnn} and \eqref{eq:poitail},
\begin{equation}\label{eq:834}
\begin{aligned}
&\limsup_{N\to \infty}\limsup_{n\to \infty} P\left(\sup_{0\le t \le n\rho^n_iT} |\cln_i(t)- t| \ge \sqrt{n}T \rho^n_i \mbox{ for some }
i \in \{N,...,n\}\right)\\
&\le  \limsup_{N\to \infty}\limsup_{n\to \infty} E\left(1\Big\{\sum_{i=N}^n e^{-A_2 x_i/2} \le \kappa \tilde \theta\Big\} (\tilde \theta)^{-1}\sum_{i=N}^n e^{-A_2 x_i/2}\right) \le \kappa.
\end{aligned}
\end{equation}
Combining \eqref{eq:837}, \eqref{eq:834}, and \eqref{eq:839}, we have, with the above choice of $N_1$,
\begin{equation}\label{eq:845}
\limsup_{N\to \infty}\limsup_{n\to \infty} P\left(\sup_{0\le t \le T} |A^{n}_i(t)- \int_0^t a^n_i(s) ds| \ge \sqrt{n}T \rho^n_i \mbox{ for some }
i \in \{N,...n\}\right) \le 2\kappa.
\end{equation}
A very similar calculation shows that, with $N_1$ as above,
\begin{equation}\label{eq:846}
\limsup_{N\to \infty}\limsup_{n\to \infty}P\left(\sup_{0\le t \le T} |\mathring{D}^{n}_i(t)- nt| \ge \sqrt{n}T \rho^n_i \mbox{ for some }
i \in \{N,...,n\}\right) \le 2\kappa.
\end{equation}
Combining \eqref{eq:845}, \eqref{eq:846}, \eqref{eq:823}, and \eqref{eq:822}
\begin{equation}\label{eq:851}
\limsup_{N\to \infty}\limsup_{n\to \infty}P\left(\inf_{i \in \{N,...,n\}}\inf_{0\le t \le T} \hat X^{n}_i(t) \le x_{N_1}\right)
\le 5\kappa.
\end{equation}
Finally, combining  \eqref{eq:taunn1n}, \eqref{eq:818}, and \eqref{eq:851}, with the above choice of $N_1$,
$$
\limsup_{N\to \infty} \limsup_{n\to \infty} P(\tau^{N,N_1,n} < T) \le 10\kappa K + 5 \kappa \le \kappa_0.
$$
This proves \eqref{eq:857} and the result follows.
\end{proof}
We now present a result that follows from a minor modification of \cite[Theorem 1]{banbudest}. The main difference in the proposition below from the result in
\cite{banbudest} is that the latter considers arrival rates to various queues of the form $n - a_i\sqrt{n}$ whereas the proposition below considers a setting where arrival rates have the form $n - a_i^n\sqrt{n}$ with $a_i^n \to a_i$ as $n\to \infty$.  We remark that \cite[Theorem 1]{banbudest} provides a convergence result for the gaps between ranked processes (along with the the process for the shortest queue) whereas the result below is formulated in terms of ranked processes themselves. One can go from one limit theorem to the other by an application of the continuous mapping theorem. We also note that the limit process in \cite[Theorem 1]{banbudest} is described as an $N$-dimensional reflected Brownian motion in the nonnegative orthant with certain oblique reflections at the boundary whereas, in the theorem below, the limit is described in terms of the ordering (ranking) of $N$ one dimensional reflected Brownian motions. The rewriting of the latter in the form as given in  \cite[Theorem 1]{banbudest} can be done through an application of Tanaka's formula (cf. \cite[Lemma 4]{pitmanpal} and \cite[Remark 1]{banbudest}). Proof modifications and additional steps needed to prove the proposition below using
\cite[Theorem 1]{banbudest} are standard and therefore we omit the proof of the proposition.
\begin{proposition}
\label{prop:350}
Let $\{X^n_i(0), i \in [n]\}$ and $\mu$ be as in Theorem \ref{convergencethm}. Fix $N\in \NN$ and let
$\mu^{(N)}$ be the marginal of $\mu$ on $\RR_+^N$, namely
$$\mu^{(N)}(A):= \mu\{ \bs{x}=(x_1, x_2, \ldots) \in \RR_+^{\infty}: (x_1, \ldots, x_N) \in A\}.$$
Let $\bs{X}^{(N)}$ be the weak solution of \eqref{eq:unranksde} (with $i \in [N]$), with initial distribution $\mu^{(N)}$, as introduced in Remark \ref{rem:weakn}. Let $\bs{X}^{(N)}_{\uparrow} := (X^{(N)}_{(1)}, \ldots , X^{(N)}_{(N)})$.
Then,  as $n \to \infty$,
$$(\hat X^{n,N}_{(1)}(\cdot), \ldots , \hat X^{n,N}_{(N)}(\cdot))$$
converges in distribution in $D([0,\infty), \RR_+^N)$ to
$\bs{X}^{(N)}_{\uparrow}$.
\end{proposition}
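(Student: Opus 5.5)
This proposition will be deduced from \cite[Theorem 1]{banbudest} in three steps: first convert the $(n,N)$ system into the form treated there, then show that the small perturbation of the arrival rates does not change the argument, and finally translate the resulting gap limit into the ranked form of Remark \ref{rem:weakn}.

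\textbf{Step 1 (matching the parameters).} In the $(n,N)$ system the arrival rate to each queue is
$$n - an^{-1/2} = n - a^n_i\sqrt{n}, \qquad a^n_i := a/n \to 0 .$$
The shortest queue receives an extra $b\sqrt{n}$. So, in the notation of \cite{banbudest}, we are in the setting with fixed $K=N$, $\alpha = a^n \to 0$ and $\beta = b$. The heavy traffic slack $\sum_i a^n_i - b$ is negative, so the ranked system is not stable. This is harmless, however, because \cite[Theorem 1]{banbudest} is a transient result on compact time intervals and (as recalled in the introduction) also covers $v<0$.

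\textbf{Step 2 (the gap limit).} Write the ranked gaps $\hat Z^{n,N}_i = \hat X^{n,N}_{(i)} - \hat X^{n,N}_{(i-1)}$ via the semimartingale decomposition used in \cite{banbudest}. Each gap is a centered compensated Poisson martingale, plus a drift term, plus pushing terms at the faces $\{z_i=0\}$. The drift term is
$$\sqrt{n}\,(\text{rate differences})/\sqrt{n}\cdot n \cdot n^{-1/2},$$
which converges uniformly to the limiting constant drift: $b$ in the lowest coordinate, and $0$ elsewhere since $a/\sqrt{n}\cdot n^{-1/2}\to0$. The martingale terms converge to Brownian motions by the martingale FCLT, since their quadratic variations are $\approx 2t$ per queue. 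The pushing terms are handled by the oblique reflection map of \cite{banbudest}, which is Lipschitz (its reflection matrix is the one for ranked particles and is completely-$\mathcal S$ / of Harrison--Reiman type). The only change from the proof in \cite{banbudest} is that the drift vector is $n$-dependent but convergent. Since the Skorokhod-type map is continuous, uniform convergence of the drift passes to the limit. The one ingredient that needs care is the estimate that time spent at ties (and at $0$ with ties) is negligible, \cite[Lemma 4.2]{banbudest}. That proof uses only moment and Poisson bounds that are uniform in bounded drift parameters, so it carries over. Initial conditions converge by assumption, since the first $N$ coordinates of $\hat{\bs X}^n(0)$ are ordered and converge to $\mu^{(N)}$. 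This yields convergence of $(\hat Z^{n,N}_1,\dots,\hat Z^{n,N}_N)$ to an $N$-dimensional obliquely reflected Brownian motion $\bs{Z}^{(N)}$ in the orthant, and hence, by continuous mapping through partial sums, convergence of the ranked vector.

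\textbf{Step 3 (identifying the limit).} It remains to show that the limit of the ranked processes has the law of $\bs{X}^{(N)}_\uparrow$. Take the weak solution $\bs X^{(N)}$ from Remark \ref{rem:weakn} with $\delta=b$ and apply Tanaka's formula to the order statistics, as in \cite[Lemma 4]{pitmanpal}. This shows that its gaps satisfy the same oblique-reflection SDE: drift $b$ on the bottom particle, independent $\sqrt2$ Brownian motions, and collision local times together with the local time of the lowest particle at $0$. The computation needs that triple collisions and simultaneous hitting of zero with ties do not occur. For this, use the Girsanov representation from the proof of Theorem \ref{uniquenessthm}: under $\QQ$ the particles are independent reflected Brownian motions, for which such events have probability zero. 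Pathwise uniqueness for the oblique RBM then identifies the laws.

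I expect Step 3 to be the main obstacle, namely the rigorous Tanaka/local-time bookkeeping at the reflecting boundary $0$.
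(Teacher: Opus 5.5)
Your proposal is correct and follows essentially the same route as the paper. The paper likewise treats the result as a minor modification of \cite[Theorem 1]{banbudest} that allows convergent $n$-dependent drift parameters (here $a/n \to 0$), passes between gap and ranked coordinates by continuous mapping, and identifies the limit with $\bs{X}^{(N)}_{\uparrow}$ via Tanaka's formula as in \cite[Lemma 4]{pitmanpal}, omitting the routine details. There is one small bookkeeping slip in your Step 2: in gap coordinates the limiting drift is $(b,-b,0,\ldots,0)$, not $b$ in the first coordinate and $0$ elsewhere. The latter is the drift of the ranked vector, which your Step 3 correctly produces.
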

We now give a representation for the ranked Brownian systems $\bs{X}_{\uparrow}$ and $\bs{X}^{(N)}_{\uparrow}$ in Definition \ref{def: reflected infinite atlas model} and Remark \ref{rem:weakn} as singularly interacting diffusions that will be
used in the proof of Theorem \ref{convergencethm}.

Let $\mu \in \clp(\Lambda_0)$ and $(\Om, \clf, \PP, \{\clf_t\}_{t\ge 0})$ be a filtered probability space on which are given a collection $\{W_i, i \in \NN\}$
of Brownian motions and a $\clf_0$-measurable $\Lambda_0$-valued random variable
$\bs{X}^0 = (X_{1}^0, X_2^{0}, \ldots)\sim \mu$.

For $N\ge 1$, let $\tilde{\bs{X}}^{(N)} = (\tilde X_1^{(N)}, \ldots, \tilde X_N^{(N)})$ be given as the solution of the following system of equations.
\begin{equation}\label{eq:328}
\begin{aligned}
\tilde X_1^{(N)}(t) &= X_1^0  + b t + \sqrt{2}\,B_1(t) + L^{(N)}_1(t) - \tfrac12 L^{(N)}_2(t),\\[4pt]
\tilde X_i^{(N)}(t) &= X_i^0  + \sqrt{2}\,B_i(t) + \tfrac12 L_i^{(N)}(t) - \tfrac12 L_{i+1}^{(N)}(t),
\qquad 2 \le i \le N-1,\\[4pt]
\tilde X_N^{(N)}(t) &= X_N^0 + \sqrt{2}\,B_N(t) + \tfrac12 L_N^{(N)}(t).
\end{aligned}
\end{equation}
where $\{L_i^{(N)}\}$ are continuous, $\clf_t$-adapted, nondecreasing processes starting from $0$ such that
\begin{equation}\label{eq:loctim}
\int_0^{\infty} \tilde X_1^{(N)}(s)\, dL_1^{(N)}(s) = 0,
\qquad
\int_0^{\infty} \bigl(\tilde X_i^{(N)}(s)-\tilde X_{i-1}^{(N)}(s)\bigr)\, dL_i^{(N)}(s) = 0,
\quad 2 \le i \le N.
\end{equation}
From the work of Harrison and Reiman \cite{HarrisonReiman1981} it is known that the above system of equations has a unique pathwise solution. Furthermore, by an application of Tanaka's formula (cf. \cite[Remark 1]{banbudest}) it follows that
$\tilde{\bs{X}}^{(N)}$ has the same distribution as $\bs{X}^{(N)}_{\uparrow}$.

Next, consider a related infinite system of equations for continuous
$\RR_+^{\infty}$ valued processes $\tilde{\bs{X}} = (\tilde X_1, \tilde X_2, \ldots)$, as follows
\begin{equation}\label{eq:344}
\begin{aligned}
\tilde X_1(t) &= X_1^0  + b t + \sqrt{2}\,B_1(t) + L_1(t) - \tfrac12 L_2(t),\\[4pt]
\tilde X_i(t) &= X_i^0  + \sqrt{2}\,B_i(t) + \tfrac12 L_i(t) - \tfrac12 L_{i+1}(t),
\qquad i\ge 2,
\end{aligned}
\end{equation}
where $L_i$ are continuous, $\clf_t$-adapted, nondecreasing processes starting from $0$ such that \eqref{eq:loctim} is satisfied with $L_i^{(N)}$ replaced with $L_i$ and $2 \le i \le N$ replaced with $i\ge 2$. Also note that the first $N$ driving Brownian motions are the same in both systems.
It can then be shown that, for any $\mu \in \clp(\Lambda_0)$, there is a $\clf_t$-adapted, continuous, $\RR_+^{\infty}$ valued process  $\tilde{\bs{X}} = (\tilde X_1, \tilde X_2, \ldots)$ such that
for each $K \in \NN$,
$$(\tilde X^{(N)}_1, \ldots \tilde X^{(N)}_2, \ldots , \tilde X^{(N)}_K) \to (\tilde X_1, \tilde X_2, \ldots , \tilde X_K),$$
uniformly on compacts, a.s. and that the limit process $\tilde{\bs{X}}$ 
solves the system of equations in \eqref{eq:344}. Following Sarantsev \cite{Sarantsev2017InfiniteSystems}, we refer to this limit process as the {\em strong approximative solution of \eqref{eq:344}.} The proof of this fact follows exactly as that of \cite[Theorem 3.7]{Sarantsev2017InfiniteSystems}. It relies on the observation that for any $i \in \mathbb{N}$ and $t \in [0,T]$, $\{\tilde X^{(N)}_i(t) : N \ge 3\}$ forms a non-increasing sequence when constructed from the same sequence of Brownian motions.
This observation follows from (\cite{Sarantsev2019}, Corollary 3.9), which can be extended to the case of competing \textit{reflected} brownian motions when one notes that the proof still holds if one takes the diffusion, drift, and collision parameters to be zero for the first particle, which results in a stationary particle at zero that the others reflect off of. As this sequence is non-negative, we can construct
$$
\tilde X_i(t) := \lim_{N \to \infty}\tilde X^{(N)}_i(t), \quad i \in \mathbb{N}, t \in [0,T].
$$
Upon further observing that the local times also have a limit (using similar monotonicity properties), this produces the limiting family of processes $\{\tilde X_i : i \in \mathbb{N}\}$.

Furthermore,
if $\bs{X}$ is the unique weak solution of \eqref{eq:unranksde} with initial distribution $\mu$ then $\bs{X}_{\uparrow}$ has the same distribution on $C([0,\infty), \RR_+^{\infty})$ as the strong approximative solution of \eqref{eq:344} introduced above (this follows along the same lines as \cite[Lemma 3.5]{Sarantsev2017InfiniteSystems} upon incorporating reflection at the origin).
 Combining the above observations we immediately have the following result.

 \begin{proposition}
 \label{prop:434}
Let $\{X^n_i(0), i \in [n]\}$ and $\mu$ be as in Theorem \ref{convergencethm}. For $N\in \NN$, let $\bs{X}^{(N)}$ be as in Proposition \ref{prop:350}.
Let $\bs{X}$ be the unique weak solution of \eqref{eq:unranksde} with initial distribution $\mu$. Then, for each $K\in \NN$, as $N \to \infty$,
$$(X^{(N)}_{(1)}, \ldots , X^{(N)}_{(K)})$$
converges in distribution, in $C([0,\infty), \RR_+^K)$, to
$$(X_{(1)}, \ldots , X_{(K)}).$$
\end{proposition}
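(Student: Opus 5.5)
The plan is to combine the coupling between the finite-$N$ systems and the infinite system, as the paper itself indicates ("combining the above observations"). Fix $K\in\NN$. On the probability space above, let $\tilde{\bs{X}}^{(N)}$ solve \eqref{eq:328} with initial condition $(X^0_1,\ldots,X^0_N)$, where $\bs X^0\sim\mu$. Let $\tilde{\bs X}$ be the strong approximative solution of \eqref{eq:344}, driven by the same Brownian motions. Two distributional identities are already available:
\begin{itemize}
\item[(a)] $\tilde{\bs X}^{(N)}$ has the law of $\bs X^{(N)}_\uparrow$, where $\bs X^{(N)}$ is the weak solution with initial law $\mu^{(N)}$ (Harrison--Reiman together with Tanaka's formula, as noted above);
\item[(b)] $\tilde{\bs X}$ has the law of $\bs X_\uparrow$, using the uniqueness in Theorem \ref{uniquenessthm} and the analogue of \cite[Lemma 3.5]{Sarantsev2017InfiniteSystems}.
\end{itemize}
In (a) we use that $\mu^{(N)}$ is the law of the first $N$ coordinates of $\bs X^0$, which are already ordered since $\mu\in\clp(\Lambda_0)$. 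So for the first $N$ particles, ranked and unranked initial data coincide.

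With these identities the statement reduces to a pathwise statement. By the construction recalled before the proposition, for each fixed $K$ we have $(\tilde X^{(N)}_1,\ldots,\tilde X^{(N)}_K)\to(\tilde X_1,\ldots,\tilde X_K)$ uniformly on compacts almost surely. Almost sure convergence in $C([0,\infty),\RR_+^K)$ with the local uniform topology implies convergence in distribution. Hence the laws of $(\tilde X^{(N)}_1,\ldots,\tilde X^{(N)}_K)$ converge to that of $(\tilde X_1,\ldots,\tilde X_K)$. Transporting this through (a) and (b) gives
$$(X^{(N)}_{(1)},\ldots,X^{(N)}_{(K)})\Rightarrow(X_{(1)},\ldots,X_{(K)}).$$
Here we note that taking the first $K$ coordinates is a continuous map. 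Therefore equality of laws of the full processes yields equality of laws of the $K$-dimensional projections.

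The only genuinely delicate step is the almost sure local uniform convergence, as opposed to merely pointwise monotone convergence. Monotonicity in $N$ (the extension of \cite[Corollary 3.9]{Sarantsev2019} to the reflected case) gives $\tilde X^{(N)}_i(t)\downarrow\tilde X_i(t)$ for each $t$. To upgrade this to uniform convergence on $[0,T]$, I would argue as in \cite[Theorem 3.7]{Sarantsev2017InfiniteSystems}. First, using the $\Lambda_0$ growth condition and Gaussian tail bounds as in the first lemma of Section \ref{sec:thm1pf}, show that with high probability the particles of index beyond some random $M$ never come within distance of the lowest $K$ particles on $[0,T]$. On that event, for all $N\ge M$ the first $K$ coordinates of the finite systems coincide with those of the $M$-truncated dynamics up to the effect of the local time $L_{M+1}^{(N)}$, and this effect is zero. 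The first $K$ components therefore stabilize, which gives uniform convergence on $[0,T]$ almost surely after letting the probability of the bad event tend to zero. Alternatively, one can apply Dini's theorem to the monotone sequence of continuous functions, using continuity of the limit $\tilde X_i$.
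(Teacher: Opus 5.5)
Your main argument is the paper's. You identify $\tilde{\bs X}^{(N)}$ with $\bs X^{(N)}_\uparrow$ via Harrison--Reiman and Tanaka; your remark that $\mu^{(N)}$ is the law of the already ordered vector $(X^0_1,\ldots,X^0_N)$ is exactly what is needed there. You identify the strong approximative solution $\tilde{\bs X}$ with $\bs X_\uparrow$. You then pass from almost sure local uniform convergence of the first $K$ coordinates to convergence in distribution. Importing that almost sure convergence from \cite[Theorem 3.7]{Sarantsev2017InfiniteSystems}, as the paper does, is fine. However, neither justification you sketch for it in your last paragraph works. The Dini route assumes the nontrivial part: a decreasing limit of continuous functions is only upper semicontinuous. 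Continuity of $\tilde X_i$, together with convergence of the local times, is precisely what Sarantsev's argument supplies. The stabilization route is wrong, as follows.

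The stabilization sketch conflates two couplings. The strong approximative solution is rank-synchronous: $B_i$ drives rank $i$ in every system \eqref{eq:328}. There $L^{(N)}_{M+1}$ is the collision local time of ranks $M$ and $M+1$, which are always adjacent. So it does not vanish on the event that ranks beyond $M$ stay away from the lowest $K$; it becomes positive as soon as the $(M+1)$-st gap closes. Take $\mu=\nu_{a,b}$, where the gaps beyond the first are i.i.d.\ $Exp(a)$. Before its first zero, the $(M+1)$-st gap is dominated by $(X^0_{M+1}-X^0_M)+\sqrt2(B_{M+1}-B_M)$. Hence $P(L^{(N)}_{M+1}(T)>0)$ is bounded below by a constant not depending on $M<N$. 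The resulting downward push on rank $M$ propagates through $L^{(N)}_M, L^{(N)}_{M-1},\ldots$, so the first $K$ coordinates need not freeze. The Gaussian-tail/$\Lambda_0$ estimate of the first lemma of Section \ref{sec:thm1pf} controls \emph{named} particles, which cross freely; it says nothing about gaps between consecutive ranks.

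Your localization idea does work in the named construction, and there it gives a direct proof that bypasses the strong approximative solution. Realize the $N$-particle and infinite systems as Girsanov transforms, with densities $D^{(N)}_T$ and $D_T$, of the same independent reflected Brownian motions $X_1,X_2,\ldots$ under $\QQ$. Let $G_N$ be the event $\inf_{t\le T}\inf_{i>N}X_i(t)>\sup_{t\le T}\max_{i\le K}X_i(t)$. On $G_N$ the lowest $K$ ranked paths of the two systems agree and the minima agree. Hence $D^{(N)}_T=D_T$ there; to see this, localize at the first time a label $>N$ comes down to $\max_{i\le K}X_i$. The Borel--Cantelli argument of that lemma gives $\QQ(G_N^c)\to0$, and $\EE_{\QQ}(D^{(N)}_T)^2=\EE_{\QQ}D_T^2=e^{\delta^2T/2}$. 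Combined with weak uniqueness (Theorem \ref{uniquenessthm} and Remark \ref{rem:weakn}), this yields convergence in total variation on $C([0,T],\RR_+^K)$. This is exactly in the spirit of Lemma \ref{comparisonlemmaR}.
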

We can now complete the proof of Theorem \ref{convergencethm}. 

\noindent{\em Completing the proof of Theorem \ref{convergencethm}.}
It suffices to show that for each $K \in \NN$,
$(\hat X^n_{(1)}, \ldots \hat X^n_{(K)})$ converges in distribution, in $D([0,\infty), \RR_+^K)$, to $(X_{(1)}, \ldots , X_{(K)})$.
Consider a metric that metrizes the weak convergence topology on $D([0,\infty), \RR_+^K)$ (e.g. the bounded-Lipschitz distance) and denote it by $\bs{d}_K$.
Let $\mu^{n,K}$ (resp. $\mu^K$) be the probability law of $(\hat X^n_{(1)}, \ldots \hat X^n_{(K)})$ (resp. $(X_{(1)}, \ldots , X_{(K)})$) on $D([0,\infty), \RR_+^K)$.
Also, with $\hat X^{n,N}_{(i)}$ as in Lemma \ref{comparisonlemmaR},
denote the probability law of $(\hat{X}_{(1)}^{n,N}, \ldots , \hat{X}_{(K)}^{n,N})$
as $\mu^{n,N, K}$. Finally, with $X^{(N)}_{(i)}$ as in Proposition \ref{prop:350},
let
$\mu^{(N), K}$ be the probability law of $(X^{(N)}_{(1)}, \ldots , X^{(N)}_{(K)})$.
Then
\begin{align*}
\bs{d}_K(\mu^{n,K}, \mu^K) \le \bs{d}_K(\mu^{n,K}, \mu^{n,N, K}) + \bs{d}_K(\mu^{n,N, K}, \mu^{(N), K}) + \bs{d}_K(\mu^{(N), K},\mu^K).
\end{align*}
Consequently,
\begin{align*}
\limsup_{n\to \infty}
\bs{d}_K(\mu^{n,K}, \mu^K) &\le \limsup_{n\to \infty} \bs{d}_K(\mu^{n,K}, \mu^{n,N, K}) + \limsup_{n\to \infty} \bs{d}_K(\mu^{n,N, K}, \mu^{(N), K}) + \bs{d}_K(\mu^{(N), K},\mu^K)\\
& = \limsup_{n\to \infty} \bs{d}_K(\mu^{n,K}, \mu^{n,N, K}) + 
 \bs{d}_K(\mu^{(N), K},\mu^K).
\end{align*}
where the second line is from Proposition \ref{prop:350}. Taking $\limsup_{N\to \infty}$ in the above display,
\begin{align*}
\limsup_{n\to \infty}
\bs{d}_K(\mu^{n,K}, \mu^K) &\le \limsup_{N\to \infty}\limsup_{n\to \infty} \bs{d}_K(\mu^{n,K}, \mu^{n,N, K}) + 
\limsup_{N\to \infty}\bs{d}_K(\mu^{(N), K},\mu^K)\\
&=0,
\end{align*}
where the last line is from Lemma \ref{comparisonlemmaR} and Proposition \ref{prop:434}.
The result follows.
\hfill \qed

\subsection{Proof of Theorem \ref{convergencethm2}.}
\label{proofofdiffusionlimitsect2}
For fixed $N \in \NN$, define processes $Y_i^{n,N}$, $i\in [N]$ by equation \eqref{eq:yeqns}, setting $Y^{n,N}_{0}(t)=0$ and $Y^{n,N}_{N+1}(t)=\infty$
for all $t\ge 0$. As before, define diffusion scaled processes
$\hat Y^{n,N}_i(t) = \frac{Y^{n,N}_i(t)}{\sqrt{n}}$, $t \ge 0$, $i \in [N]$. Then we have the following analog of Proposition \ref{prop:350}.
\begin{proposition}
\label{prop:3502}
Let $\{X^n_i(0), i \in [n]\}$, $\mu$,
$\mu^{(N)}$, $\bs{X}^{(N)}$, $\bs{X}^{(N)}_{\uparrow}$ be as in Proposition \ref{prop:350}.
Then,  as $n \to \infty$,
$$\hat{\bs{Y}}^{n,N}:=(\hat Y^{n,N}_{1}, \ldots , \hat Y^{n,N}_{N})$$
converges in distribution in $D([0,\infty), \RR_+^N)$ to
$\bs{X}^{(N)}_{\uparrow}$.
\end{proposition}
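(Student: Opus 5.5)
We prove Proposition \ref{prop:3502} by showing that $\hat{\bs{Y}}^{n,N}$ satisfies the same gap-process description as in \cite[Theorem 1]{banbudest} up to error terms that vanish in probability, uniformly on compacts. The idea is that the system with pauses differs from the original $N$-server system only on the set of times where two ranked queues are tied, and this set has asymptotically negligible Lebesgue measure. As in Proposition \ref{prop:350}, the limit will appear as a semimartingale reflecting Brownian motion in the orthant; it is then identified with $\bs{X}^{(N)}_{\uparrow}$ through the Harrison--Reiman representation \eqref{eq:328}.

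\textbf{Semimartingale decomposition.} From \eqref{eq:yeqns}, compensating the Poisson random measures gives
\begin{align*}
\hat Y^{n,N}_i(t) &= \hat Y^{n,N}_i(0) + \hat M^n_i(t) + n^{-1/2}\int_0^t (\gamma^n_i - n)\,ds\\
&\quad - n^{-1/2}\int_0^t \gamma^n_i 1\{Y^{n,N}_i=Y^{n,N}_{i+1}\}\,ds + n^{1/2}\int_0^t 1\{Y^{n,N}_i = Y^{n,N}_{i-1}\}\,ds .
\end{align*}
Here $\hat M^n_i$ is a martingale with predictable quadratic variation $n^{-1}\int_0^t(\gamma^n_i 1\{\cdot\}+n 1\{\cdot\})ds$. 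The drift term equals $bt\,1\{i=1\}$ plus $O(n^{-1}t)$.

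\textbf{Pushing terms.} Define
\[
\hat L^n_i(t) := \sqrt{n}\int_0^t 1\{Y^{n,N}_i=Y^{n,N}_{i-1}\}\,ds \quad\text{for } i\ge 2,
\]
and let $\hat L^n_1$ be the corresponding zero-boundary term, namely $\sqrt{n}$ times the idle time of queue $1$. The arrival-pause term for rank $i$ is $n^{-1/2}\gamma^n_i\int_0^t 1\{Y_i=Y_{i+1}\}ds = \hat L^n_{i+1}(t)(1+O(n^{-1/2}))$. Each $\hat L^n_i$ increases only when $U^n_i=0$.

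Rewriting in gap coordinates $\hat U^{n,N}_i$ expresses the process as the $N$-dimensional Skorohod problem with reflection matrix $R = I - P^{\top}$-type. Here the $i$-th boundary pushes $+1$ on $U_i$ and $-1$ on both $U_{i-1}$ and $U_{i+1}$, which is the same tridiagonal matrix that \eqref{eq:328} produces after differencing (with $\tfrac12$ rescaling of local times). The oblique reflection matrix is a completely-$\mathcal S$ / $\mathcal M$-type matrix: it is exactly the transpose structure of the Jackson network routing in Section \ref{setupsect}. Therefore the Skorohod map is Lipschitz (Harrison--Reiman \cite{HarrisonReiman1981}).

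\textbf{Martingale limit and tightness.} For the martingales, the predictable quadratic variation of $\hat M^n_i$ is $2t$ minus $O(\hat L^n/\sqrt{n})$ corrections. Once $\hat L^n_i$ is shown to be stochastically bounded, this converges to $2t$. Cross variations vanish since the Poisson random measures are independent. By the martingale FCLT, $\hat M^n \Rightarrow \sqrt2(B_1,\dots,B_N)$. The Lipschitz continuity of the Skorohod map then yields tightness of $(\hat U^{n,N},\hat L^n)$ and $C$-tightness of the limits, and the continuous mapping theorem gives convergence to the reflected process with drift $b$ on the first coordinate. Translating from gaps back to ranks and using \eqref{eq:328} identifies the limit as $\bs{X}^{(N)}_{\uparrow}$.

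\textbf{Main obstacle.} The main obstacle is the bookkeeping that the boundary behaviour in the paused system produces exactly the Harrison--Reiman reflection directions. For the original system these directions come from the tie-breaking rule $\gamma$-sums; here they come from pause indicators. I would check this by computing, for each boundary face $\{U_i=0\}$, the net instantaneous drift rate of $U_{i-1}, U_i, U_{i+1}$ on that face. These rates are $+n$ for $U_i$ (lost departures from $Y_i$) and $-\gamma_{i-1}$ from paused arrivals to $Y_{i-1}$, and they scale to the matrix in \eqref{eq:328}. The stochastic boundedness of $\hat L^n$ follows from the Lipschitz Skorohod bound combined with tightness of the free terms.
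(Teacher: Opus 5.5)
This is essentially the paper's proof: the semimartingale decomposition of \eqref{eq:yeqns}, the gap process written as $\Gamma_R$ of a $C$-tight free term using the Harrison--Reiman Lipschitz Skorohod map, the resulting stochastic boundedness of the pushing terms (which makes the $O(n^{-1/2})\hat L^n$ corrections and the martingale discrepancies vanish), and identification of the limit through \eqref{eq:328}. One correction to your reflection bookkeeping. On $\{U_i=0\}$ with $i\ge 2$, the paused departures from $Y_i$ raise $U_i$ and also lower $U_{i+1}$, and the paused arrivals to $Y_{i-1}$ lower $U_{i-1}$ and also raise $U_i$. So per unit of $\hat L^n_i$ the push on $(U_{i-1},U_i,U_{i+1})$ is $(-1,2,-1)$; the paper normalizes $L^{n,N}_i=2\hat L^n_i$ to get $(-\tfrac12,1,-\tfrac12)$. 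The zero face pushes $(1,-1)$ on $(U_1,U_2)$, which gives $R_{21}=-1$. Taken literally, the directions $(-1,1,-1)$ you wrote give $I$ minus the path adjacency matrix, which violates the Harrison--Reiman condition, so the Lipschitz property would fail. Differencing your own (correct) decomposition in rank coordinates does produce the right $R$, so this is a slip rather than a gap.
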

\begin{proof}
Consider the $N\times N$ reflection matrix $R$ defined as
\[
R =
\begin{pmatrix}
1 & -\tfrac12 & 0 & \cdots & 0 \\
-1 & 1 & -\tfrac12 & \cdots & 0 \\
0 & -\tfrac12 & 1 & -\tfrac12 &  \vdots  \\
\vdots & \vdots & \ddots & \ddots  \\
0 & 0 & \cdots & -\tfrac12 & 1
\end{pmatrix}.
\]
Denote the Skorohod map on $\RR_+^N$ associated with the above reflection matrix as $\Gamma_R$, namely $\Gamma_R$ is a map from collection of functions $\psi$ in $D([0,\infty): \RR^N)$, satisfying $\psi(0) \in \RR_+^N$, to $D([0,\infty): \RR_+^N)$
such that $\Gamma_R(\psi) = \phi$ if and only if $\phi(t) = \psi(t) + R\eta(t)$, $t\ge 0$, where $\eta = (\eta_1, \ldots, \eta_N)$ is a $\RR_+^N$ valued function such that $\eta_i(0)=0$ and  $\eta_i$ is nondecreasing, and 
$\int_{[0,\infty)} \phi_i(t) d\eta_i(t) =0$ for every $i \in [N]$. From \cite{HarrisonReiman1981} it is known that the Skorohod map for the above reflection matrix is well defined and has the Lipschitz property: There is a $L_R \in (0, \infty)$ such that for any $T<\infty$, and $\psi, \tilde \psi$ as above
$$
\sup_{0\le t \le T} |\Gamma_R(\psi)(t) - \Gamma_R(\tilde \psi)(t)| \le L_R \sup_{0\le t \le T} |\psi(t) - \tilde \psi(t)|.
$$
Note that
\begin{equation*}
\hat Y_i^{n,N}(t) = \hat x^n_i + M_i^{n,N,A}(t) - M_i^{n,N,D}(t) + n^{-1/2} (\gamma^n_i -n)t - n^{-1}\gamma^n_i L^{n,N}_{i+1}(t) +  L^{n,N}_i(t),
\end{equation*}

where, with $\tilde\cla_i^c(ds\, dr) = \tilde\cla_i(ds\, dr) - ds\, dr$ and $\tilde\cld_i^c(ds\, dr) = \tilde\cld_i(ds\, dr) - ds\, dr$,
\begin{align*}
M_i^{n,N,A} &= n^{-1/2}\int_{[0,t]\times \RR_+} 1_{[0, \gamma^n_i]}(r)1_{\{Y^n_{i}(s-)<Y^n_{i+1}(s-)\}} \tilde \cla_i^c(ds\, dr),\\
M_i^{n,N,D} &= n^{-1/2}\int_{[0,t]\times \RR_+} 1_{[0, n]}(r)1_{\{Y^n_{i}(s-)>Y^n_{i-1}(s-)\}} \tilde \cld^c_i(ds\, dr)
\end{align*}
and
$$L^{n,N}_{i}(t) = n^{1/2}(1+ 1_{\{i\in \{2,...,N\}\}})\int_0^t 1_{\{Y^{n,N}_{i}(s)=Y^n_{i-1}(s)\}} ds, \; i \in [N]; \;\; L^{n,N}_{N+1}(t) =0.$$

Rearranging terms, and letting $M_i^{n,N,A} - M_i^{n,N,D}= M_i^{n,N}$ and $\beta^{n,N}_i= b1_{\{i=1\}}- an^{-1}$.
\begin{equation*}
\hat Y_i^{n,N}(t) = \hat x^n_i + M_i^{n,N}  + \beta^{n,N}_it - \frac{1}{2}n^{-1/2}\beta^{n,N}_i L^{n,N}_{i+1}(t) -  \frac{1}{2}L^{n,N}_{i+1}(t) +  \frac{1}{2}(1+1_{\{i=1\}})L^{n,N}_i(t),
\end{equation*}

Now define
\begin{align*}
\hat V_1^{n,N}(t) &= \hat x^n_1 + M_1^{n,N}(t)  + \beta^{n,N}_1 t  - \delta^{n,N}_1(t) \\
\hat V_i^{n,N}(t) &= \hat x^n_i- \hat x^n_{i-1} + M_i^{n,N}(t) - M_{i-1}^{n,N}(t) 
+ (\beta^{n,N}_i-\beta^{n,N}_{i-1})t- \delta^{n,N}_i(t), \; i \in \{2,... N\} 
\end{align*}
where, with $\beta^{n,N}_{0} =0$,
\begin{align*}
\delta^{n,N}_i(t) =  \frac{1}{2}n^{-1/2}\beta^{n,N}_{i} L^{n,N}_{i+1}(t) - \frac{1}{2}n^{-1/2}\beta^{n,N}_{i-1} L^{n,N}_{i}(t).
\end{align*}

Then, it follows that, with $\hat{\bs{V}}^{n,N} = (\hat V_1^{n,N}, \ldots , \hat V_N^{n,N})$,
\begin{equation}\label{eq:1021}
\hat{\bs{U}}^{n,N} = \Gamma_R(\hat{\bs{V}}^{n,N}).
\end{equation}
By a standard application of Aldous-Kurtz criterion we see that
$\hat{\bs{V}}^{n,N}$ is $C$-tight in $D([0,\infty):\RR^N)$. From the Lipschitz property of the Skorohod map it follows that $(\hat{\bs{U}}^{n,N}, \bs{L}^{n,N})$ is also $C$-tight in
$D([0,\infty):\RR_+^N\times \RR_+^N)$, where $\bs{L}^{n,N}= (L_1^{n,N}, \ldots, L_N^{n,N})$.
This in particular shows that, for each $i \in [N]$,
$M_i^{n,N} - \mathring{M}_i^{n,N}$ and $\delta^{n,N}_i$ converges to $0$ in probability in $D([0,\infty):\RR)$, where
$$\mathring{M}_i^{n,N}= \mathring{M}_i^{n,N,A}- \mathring{M}_i^{n,N,D},$$
and
\begin{align*}
\mathring{M}_i^{n,N,A} = n^{-1/2}\int_{[0,t]\times \RR_+} 1_{[0, \gamma^n_i]}(r) \tilde \cla_i^c(ds\, dr), \; \mathring{M}_i^{n,N,D} = n^{-1/2}\int_{[0,t]\times \RR_+} 1_{[0, n]}(r) \tilde \cld^c_i(ds\, dr), \; i \in [N].
\end{align*}
Consequently, as $n \to \infty$,
$$(\hat{\bs{X}}^{n,N}(0),  {\bs{M}}^{n,N}, \bs{\delta}^{n,N}) \Rightarrow (\bs{X}^{0,N}, \sqrt{2} \bs{B}^N, 0),
$$
where ${\bs{M}}^{n,N} = (M_1^{n,N}, \ldots, M_N^{n,N})$,
$\bs{\delta}^{n,N} = (\delta_1^{n,N}, \ldots, \delta_N^{n,N})$,
$ \bs{X}^{0,N} = (X^0_1, \ldots, X^0_N)$, and $\bs{B}^N = (B_1, \ldots, B_N)$.
Also, with $\bs{\beta}^N = (b, 0, \ldots)$, and  $\bs{\beta}^{n,N} = (\beta^{n,N}_1, \ldots, \beta^{n,N}_N)$, $\bs{\beta}^{n,N} \to \bs{\beta}^N$ as $n\to \infty$.

Recall from Section \ref{sec:origdyn} that $\bs{X}^{(N)}_{\uparrow}$  has the same distribution as $\tilde{\bs{X}}^{(N)}$ and consequently $\bs{Z}^{(N)} = (Z^{(N)}_1,\ldots,Z^{(N)}_N)$, where $Z^{(N)}_i := X_{(i)}^{(N)}- X_{(i-1)}^{(N)}$, $i \in [N]$, has the same distribution as $\tilde{\bs{Z}}^{(N)}= (\tilde Z_1^{(N)}, \ldots , \tilde Z_N^{(N)})$, where 
$\tilde Z_i^{(N)} = \tilde X_i^{(N)}- \tilde X_{i-1}^{(N)}$, $i \in [N]$, and we take
$X_{(0)}^{(N)}=\tilde X_{0}^{(N)}=0$. Furthermore, from the evolution equations for $\tilde{\bs{X}}^{(N)}$ in \eqref{eq:328} we see that
$$
\tilde{\bs{Z}}^{(N)} = \Gamma_R( {\bs{V}}^{N})$$
where ${\bs{V}}^{N} = (V_1^N, \ldots V_N^N)$ and 
\begin{align*}
V_1^{N}(t) &= X^0_1 + \sqrt{2} B_1(t)  + \beta^{N}_1 t   \\
V_i^{N}(t) &= X^0_{i}- X^0_{i-1} + \sqrt{2} B_i(t) - \sqrt{2} B_{i-1}(t)
+ (\beta^{N}_i-\beta^{N}_{i-1})t, \; i \in \{2,..., N\}.
\end{align*}
Using the Lipschitz property of the Skorohod map and \eqref{eq:1021}, we now have that
$\hat{\bs{U}}^{n,N}\Rightarrow \bs{Z}^{(N)}$. The convergence stated in the proposition now follows from the continuous mapping theorem.


\end{proof}
We now give an analog of Lemma \ref{comparisonlemmaR}. Since the proof follows along similar lines, we suppress some arguments.
We begin by introducing an {\em unordered system} such that the associated ranked system has the same distribution as $\bs{Y}^n = (Y^n_1, \ldots , Y^n_n)$.
The state processes for the unordered system will be denoted as $\clx^n_i$, $i \in [n]$, that describe a collection of $n$-queues for which, at time instants $t$ at which the $i$-th queue
is not tied with any other queues, the arrival rate to  it  is
$n -an^{-1/2} + b\sqrt{n} 1_{\{\clx^n_i(t) = \min_{j\in [n]}\clx^n_j(t)\}}$
and the departure rate is $n$. However, at time instants when this queue is tied with some other queues, both the arrival rate and departure rate slow down by a factor of 
$$T^n_i(t) = \left(\sum_{j \in [n]} 1_{\{\clx^n_i(t) = \clx^n_j(t)\}}\right)^{-1}.$$
A precise evolution equation can be  given using the PRM that were used in \eqref{eq:xnit}:
\begin{equation}
\label{eq:xnitmo}
\clx^n_i(t) = x^n_i + A^{n,*}_i(t)  - D^{n,*}_i(t), \; i \in [n], t\ge 0,
\end{equation}
where, with
$$a^{n,*}_i(t) = (n - an^{-1/2} + bn^{1/2}1_{\{\clx^{n}_i(t) = \min_{j \in [n]} \clx^{n}_j(t)\}})T^n_i(t);\;\; d^{n,*}_i(t) = n T^n_i(t).$$
\begin{align*}
A^{n,*}_i(t) &= \int_{[0,t]\times \RR_+} 1_{[0, a^{n,*}_i(s-)]}(r) \cla_i(ds\, dr), \; i \in [n]\\
D^{n,*}_i(t) &= \int_{[0,t]\times \RR_+} 1_{[0, d^{n,*}_i(s-)]}(r) 1_{\{\clx^n_i(s-) \neq 0\}} \cld_i(ds\, dr), \; i \in [n].
\end{align*}
It can then be seen that if $\bs{\clx}^n_{\uparrow} = (\clx^n_{(1)}, \ldots, \clx^n_{(n)})$
is the corresponding ranked system (with ties broken in lexicographical order),
$\bs{\clx}^n_{\uparrow}$ has the same distribution as $\bs{Y}^n$. The state processes of the analogous systems with a fixed $N$ will be denoted by $\bs{\clx}^{n,N}$ and
$\bs{\clx}^{n,N}_{\uparrow}$ and one has that
$\bs{\clx}^{n,N}_{\uparrow}$ has the same distribution as $\bs{Y}^{n,N}$.
As before, diffusion scaled processes will be denoted with a {\em hat}.
\begin{lemma}
\label{comparisonlemmaR2}
    Let  $\{X^n_i(0), i \in [n]\}$ and  $\mu$ be as in the statement of Theorem \ref{convergencethm}. Fix  $T>0$ and $K \in \NN$. 
Then there exists a sequence $\{\epsilon_{N}, N \in \NN\}$ such that $\epsilon_{N}\rightarrow 0$ as $N\to \infty$, and
\begin{equation}
\label{comparisoneqnR2}
    \liminf_{n \rightarrow \infty} P\Big((\hat{\clx}_{(1)}^{n}(t),...,\hat{\clx}^{n}_{(K)}(t))=(\hat{\clx}_{(1)}^{n,N}(t),...,\hat{\clx}_{(K)}^{n,N}(t)) \hspace{5mm} \forall t \in [0,T]\Big) \geq 1-\epsilon_{N}.
\end{equation}
\end{lemma}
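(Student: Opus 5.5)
We will mimic the proof of Lemma \ref{comparisonlemmaR}, working with the unordered system \eqref{eq:xnitmo} and its $N$-queue analogue $\bs{\clx}^{n,N}$, both built from the same PRM $(\cla_i,\cld_i)$ and the same initial condition. Fix $N_1$ with $K<N_1<N$ and define
$$
\tau^{N,N_1,n}_*:= \inf\Big\{t\ge 0: \sup_{i \le K}  \hat \clx^n_i(t) \ge x_{N_1} \mbox{ or }
\inf_{i \ge N}  \hat \clx^n_i(t) \le x_{N_1}\Big\}.
$$
The first step is a coupling claim. On $\{\tau^{N,N_1,n}_*\ge T\}$, queues $1,\ldots,K$ stay strictly below every queue with index $\ge N$. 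Hence the minimum over $[n]$ is attained in $[N]$, and no queue of index $\le K$ is tied with a queue of index $>N$. So for $i\le K$ the rates $a^{n,*}_i,d^{n,*}_i$ (including the tie slow-down factor $T^n_i$) coincide in the two systems.

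This alone is not quite enough, because queues in $\{K+1,\ldots,N\}$ may tie with queues beyond $N$. The cleaner route is to argue inductively over the successive jump times on $[0,T]$. As long as the event holds, the $K$ lowest ranked values of both systems are formed by queues with indices in $[N]$ lying below $x_{N_1}$. We then define the stopping time with $\sup_{i\le K}$ replaced by the supremum over those queues currently below level $x_{N_1}$, and its complement controlled by $\inf_{i \ge N}$. Equivalently, we show that on this event the two systems agree for all queues whose value is below $x_{N_1}$, since all their rates depend only on queues in $[N]$. This gives the identity of the first $K$ ranked processes, and hence a lower bound by $\liminf_n P(\tau^{N,N_1,n}_*\ge T)$. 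The remaining task is to verify \eqref{eq:857} with $\tau_*$ in place of $\tau$.

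For the probability estimates we repeat \eqref{eq:taunn1n}--\eqref{eq:851} verbatim. The only properties used there were that the arrival intensity is bounded by $2n$ (here $a^{n,*}_i\le a^n_i\le 2n$ since $T^n_i\le 1$) and a deviation bound for the counting processes. With time-changed intensities, we write $A^{n,*}_i(t)=\cln^A_i(\int_0^t a^{n,*}_i(s)ds)$ and $\mathring D^{n,*}_i(t)=\cln^D_i(\int_0^t d^{n,*}_i(s)ds)$ for rate-one Poisson processes. The elapsed clocks are at most $2nT$ and $nT$, and their deficits relative to $nt$ are nonnegative, so they only help.

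The main obstacle is this drift. The compensators are now smaller than $nt$ by the tie time $\int_0^t n(1-T^n_i)ds$, so $\clx^n_i$ no longer has drift of order $b$ bounded as before. For the upper bound on queues $i\le K$ we instead bound $\clx^n_i$ above by $\sqrt{n}x_i$ plus the unreflected $\cln^A_i(\cdot)$, minus departures. When $\clx^n_i$ is positive and untied, departures occur at full rate, and tie periods contribute equally reduced arrivals and departures. We handle this with a one-dimensional Skorohod-map comparison. For the lower bound on queues $i\ge N$, reduced departures only raise the queue, so the argument goes through directly. Finally we choose $N_1$ via \eqref{condnn} and \eqref{eq:poitail} exactly as before, obtaining $\limsup_N\limsup_n P(\tau_*<T)\le\kappa_0$.
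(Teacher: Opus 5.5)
\textbf{The gap is in your coupling step; the fix you propose does not work.} Your probability estimates follow the paper's proof, apart from the drift point discussed at the end. That proof only transfers $A$ and $\tau^{N,N_1,n}$ from Lemma~\ref{comparisonlemmaR} with $X$ replaced by $\clx$, and uses $T^n_i\le 1$ to dominate the centered counting processes by $\sup_{t\le 2nT}|\cln(t)-t|$ and $\sup_{t\le nT}|\cln(t)-t|$.

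You correctly observe that in the system with pauses a queue interacts, through $T^n_j$, with every queue it is tied with, and not only through the identity of the minimum. So the reduction to \eqref{eq:857} is not automatic, and the paper's proof passes over this point.

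Your remedy is the claim that on $\{\tau^{N,N_1,n}_*\ge T\}$ the two systems agree on all queues currently below $x_{N_1}$, ``since all their rates depend only on queues in $[N]$''. This confuses agreement of current rates with agreement of current values, and values depend on the whole history. Here is a configuration the event permits:
\begin{itemize}
\item A queue $j\in\{K+1,\dots,N\}$ ties with some $m>N$ at a level above $x_{N_1}$.
\item At that moment, e.g., $T^n_j=1/2$ while $T^{n,N}_j=1$. So a point of $\cla_j$ or $\cld_j$ whose second coordinate $r$ lies between the two thresholds is accepted in one system only. Afterwards $\clx^n_j\neq\clx^{n,N}_j$ in general.
\item Nothing then prevents $j$ from descending below $x_{N_1}$ with this discrepancy. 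It can then enter the $K$ lowest in one system, or tie with one of the queues $1,\dots,K$ and pass the discrepancy on.
\end{itemize}
In particular, your first claim, that the rates of $i\le K$ coincide, already presupposes agreement of every queue that $i$ can tie with. An induction over jump times would need agreement on all of $[N]$ as its hypothesis, and that is exactly what fails.

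The event $\{\tau\ge T\}$ rules out only direct contact between the lowest queues and queues of index $>N$. It does not rule out indirect contact through chains of ties among intermediate queues. What is missing is one of the following:
\begin{itemize}
\item an estimate showing that discrepancies created by ties with queues of index $>N$ (which start near $x_N$, not merely above $x_{N_1}$) are not relayed down to the $K$ lowest queues during $[0,T]$;
\item a comparison that requires only closeness in law, rather than pathwise identity, of the first $K$ ranked processes.
\end{itemize}
Such an estimate has to exploit the initial configuration between the bottom and level $x_N$, since a discrepancy can be handed down through successive ties among the queues lying there. It is not a routine modification of Lemma~\ref{comparisonlemmaR}.

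\textbf{The drift ``obstacle'' in your last paragraph is not real, and part of your handling of it is wrong.} Center each counting process at its own compensator, as in \eqref{eq:811}. The net drift is then
$\int_0^t\bigl(a^{n,*}_i(s)-d^{n,*}_i(s)\bigr)\,ds=\int_0^t T^n_i(s)\bigl(b\sqrt n\,1_{\{\clx^n_i(s)=\min_{j\in[n]}\clx^n_j(s)\}}-an^{-1/2}\bigr)\,ds$,
whose absolute value is at most $(b\sqrt n+an^{-1/2})t$, exactly as in Lemma~\ref{comparisonlemmaR}. Together with the one-dimensional Skorohod-map bound, this gives \eqref{eq:818} and \eqref{eq:851} for $\clx$ verbatim.

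Centering instead at $nt$ produces the random term $n\int_0^t(1-T^n_i(s))\,ds$ in both the arrival and the departure stream, and these cancel only if treated together. Your remark for $i\ge N$ that ``reduced departures only raise the queue'' overlooks that arrivals are reduced by the same factor.
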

\begin{proof}
Define the event $A$ and stopping times $\tau^{N, N_1, n}$ as in the proof of 
Lemma \ref{comparisonlemmaR}, replacing $X$ everywhere with $\clx$. It suffices to show the statement in \eqref{eq:857}. We estimate the probability in this display as in \eqref{eq:taunn1n} (with $X$ replaced by $\clx$).
The first term in \eqref{eq:taunn1n} is estimated as follows. Let
$$\mathring{D}^{n,*}_i(t) = \int_{[0,t]\times \RR_+} 1_{[0, d^{n,*}_i(s-)]}(r) 1_{\{\clx^n_i(s-) \neq 0\}} \cld_i(ds\, dr).$$
Then
\begin{equation}\label{eq:811}
\begin{aligned}
P\left(\sup_{0\le t \le T} \hat{\clx}^{n}_i(t) \ge x_{N_1}\right) 
&\le P\left(\sup_{0\le t \le T}  |A^{n,*}_i(t) - \int_0^t a^{n,*}_i(s) ds| \ge \sqrt{n}T\alpha^{N_1,n}_i\right)\\
&\quad + P\left(\sup_{0\le t \le T}  |\mathring{D}^{n,*}_i(t) - \int_0^t d^{n,*}_i(s) ds| \ge \sqrt{n}T\beta^{N_1,n}_i\right),
\end{aligned}
\end{equation}
Since $T^n_i(t) \le 1$, for large $n$,
$\sup_{0\le t \le T}  |A^{n,*}_i(t) - \int_0^t a^{n,*}_i(s) ds|$ and 
$\sup_{0\le t \le T}  |\mathring{D}^{n,*}_i(t) - \int_0^t d^{n,*}_i(s) ds|$ are stochastically dominated by
$\sup_{0\le t \le 2nT} |\cln(t)-t|$ and $\sup_{0\le t \le nT} |\cln(t)-t|$, respectively.
Using this, exactly as in the proof of Lemma \ref{comparisonlemmaR} we have that the estimate in \eqref{eq:818} holds with $X$ replaced with $\clx$.

We now consider the second term in the estimate in \eqref{eq:taunn1n}.  Using again the fact that  $T^n_i(t) \le 1$, we proceed exactly as in \eqref{eq:822}--\eqref{eq:839} with $X^n, A^n, D^n$ replaced with $\clx^n, D^{n,*}, A^{n,*}$ respectively and obtain the estimate in \eqref{eq:845} with $A^n, a^n$ replaced with $A^{n,*}, a^{n,*}$, respectively.
The proof of the estimate in \eqref{eq:846}, with $\mathring{D}^{n}_i(t)- nt$ replaced with
$\mathring{D}^{n,*}_i(t)- \int_0^t d^{n,*}_i(s) ds$, is completed in a similar fashion.

The result follows.

\end{proof}

Finally, the proof of Theorem \ref{convergencethm2} is completed exactly as the proof of Theorem \ref{convergencethm} by using Lemma \ref{comparisonlemmaR2},  Proposition \ref{prop:3502} (instead of Lemma \ref{comparisonlemmaR} and Proposition \ref{prop:350}, respectively),  and Proposition \ref{prop:434}.

\section{Convergence of the Stationary Distributions}
\label{convergenceofstationariessect}
In this Section we prove Theorems \ref{thm:jack}, \ref{jackcor} and Corollaries \ref{cor:mjsq}, \ref{cor: mix}.

\subsection{Proof of Theorem \ref{thm:jack}.}
Let $\theta_i$ denote the total arrival rate to the $i$-th queue. 
Then $\theta_i$ solve (see, e.g. \cite{Kelly1979}) the traffic equations
\begin{equation}\label{eq:traffic}
\theta_i = \bar\lambda_i + \sum_{j=1}^k \theta_j P_{j,i},
\qquad 1 \le i \le k .
\end{equation}
We now argue that the solution of the traffic equations is given as
\begin{equation}
\label{claimjack}
\begin{aligned}
\theta _1 = \frac{\lambda_1\cdots\lambda_k}{\mu_2\cdots\mu_k},\;
\theta_2 = \frac{\lambda_1+\mu_2}{\lambda_1}\,\theta_1,\;
\theta_i =
\frac{\mu_2\cdots\mu_{i-1}}{\lambda_1\cdots\lambda_{i-1}}
(\lambda_{i-1}+\mu_i)\,\theta_1, \; 3 \le i \le k.
\end{aligned}
\end{equation}
To see this, consider first the traffic equation \eqref{eq:traffic} with $i=1$:
\[
\theta_1 = \theta_2 P_{2,1}
= \theta_2 \frac{\lambda_1}{\lambda_1+\mu_2},
\]
which yields
$
\theta_2 = (\lambda_1+\mu_2)(\lambda_1)^{-1}\,\theta_1$.
Next, \eqref{eq:traffic} with $i=2$ gives
\[
\theta_2 = \theta_1 P_{1,2} + \theta_3 P_{3,2}
= \theta_1 + \theta_3 \frac{\lambda_2}{\lambda_2+\mu_3}.
\]
Substituting the expression for $\theta_2$ and solving gives
$
\theta_3
= \mu_2(\lambda_1\lambda_2)^{-1}(\lambda_2+\mu_3)\theta_1$.
We now proceed recursively. Suppose $\theta_\ell$ has the form for $3\le \ell \le i < k$ given
in \eqref{claimjack}.
Using the traffic equation \eqref{eq:traffic} for $i$,
\[
\theta_i \;=\; \theta_{i-1}P_{i-1,i} \;+\; \theta_{i+1}P_{i+1,i}
\;=\;  \theta_{i-1}\frac{\mu_{i-1}}{\lambda_{i-2}+\mu_{i-1}} \;+\; \theta_{i+1}\frac{\lambda_i}{\lambda_i+\mu_{i+1}} .
\]
Hence
\[
\theta_{i+1}\frac{\lambda_i}{\lambda_i+\mu_{i+1}}
\;=\;
\theta_i-  \theta_{i-1}\frac{\mu_{i-1}}{\lambda_{i-2}+\mu_{i-1}}.
\]
Using the form of $\theta_\ell$  given
in \eqref{claimjack} for $\ell = i-1, i$, and using the convention  $\mu_2\cdots\mu_{i-2}=1$ if $i=3$,
\begin{align*}
\theta_{i+1}\frac{\lambda_i}{\lambda_i+\mu_{i+1}}
&=
\frac{\mu_2\cdots\mu_{i-1}}{\lambda_1\cdots\lambda_{i-1}}
(\lambda_{i-1}+\mu_i)\theta_1
\;-\;
\frac{\mu_2\cdots\mu_{i-2}}{\lambda_1\cdots\lambda_{i-2}}
(\lambda_{i-2}+\mu_{i-1})\theta_1 \,
\frac{\mu_{i-1}}{\lambda_{i-2}+\mu_{i-1}}
\end{align*}
Therefore,
\begin{align*}
\theta_{i+1}\frac{\lambda_i}{\lambda_i+\mu_{i+1}}
&=
\frac{\mu_2\cdots\mu_{i-1}}{\lambda_1\cdots\lambda_{i-1}}
(\lambda_{i-1}+\mu_i)\theta_1
\;-\;
\frac{\mu_2\cdots\mu_{i-1}}{\lambda_1\cdots\lambda_{i-2}}
\theta_1
\\
&=
\frac{\mu_2\cdots\mu_{i-1}}{\lambda_1\cdots\lambda_{i-1}}\theta_1
\Big[(\lambda_{i-1}+\mu_i)-\lambda_{i-1}\Big]
=
\frac{\mu_2\cdots\mu_i}{\lambda_1\cdots\lambda_{i-1}}\theta_1 .
\end{align*}
Solving yields
\[
\theta_{i+1}
=
\frac{\mu_2\cdots\mu_i}{\lambda_1\cdots\lambda_{i}}
(\lambda_i+\mu_{i+1})\theta_1.
\]
Thus we have shown the claim in \eqref{claimjack} for $i=2, \ldots k$.
Finally consider $\theta_1$. 
Substituting the explicit form of $\theta_k,\theta_{k-1}$ for $k\ge 3$ from \eqref{claimjack} into the traffic equation \eqref{eq:traffic} for $i=k$, gives
\[
\frac{\mu_2\cdots\mu_{k-1}}{\lambda_1\cdots\lambda_{k-1}}
(\lambda_{k-1}+\mu_k)\theta_1
=
\lambda_k
+
\frac{\mu_2\cdots\mu_{k-2}}{\lambda_1\cdots\lambda_{k-2}}
(\lambda_{k-2}+\mu_{k-1})\theta_1
\frac{\mu_{k-1}}{\lambda_{k-2}+\mu_{k-1}}.
\]
Simplifying gives
$
\theta_1 = (\lambda_1\cdots\lambda_k)(\mu_2\cdots\mu_k)^{-1}$.
This completes the proof of \eqref{claimjack}.

Next, define the traffic
intensity at the $i$-th queue by
\begin{equation}\label{eq:rho-def}
\rho_i \;:=\; \frac{\theta_i}{\bar\mu_i},
\end{equation}
A classical result for open Jackson networks \cite{Kelly1979} states that if
$
\rho_i < 1$ for all $1\le i\le k$,
then $\bs{Q}$ is positive recurrent and admits a unique stationary distribution,
which is of product form. In particular, the stationary distribution of $\bs{Q}$ on $\mathbb{Z}_+^k$ is given as
\[
\pi(q_1,\dots,q_k)
=
\prod_{i=1}^k (1-\rho_i)\,\rho_i^{\,q_i},
\qquad (q_1,\dots,q_k)\in\mathbb{Z}_+^k.
\]
The proof of the theorem is now completed in observing that
\[
\rho_i =\frac{\theta_i}{\bar\mu_i} = \frac{\mu_2\cdots\mu_{i-1}}{\lambda_1\cdots\lambda_{i-1}}
\,\theta_1 =  \prod_{j=i}^k \frac{\lambda_j}{\mu_j}.
\]

\subsection{Proof of Corollary \ref{cor: mix}.}
It is easy to verify that the initial queuelengths of the scaled queue process satisfy \eqref{condnn}
under $\tilde \pi^{(n)}_{a,b}$, namely,
$$\limsup_{N\to \infty} \limsup_{n\to \infty} \tilde \pi^{(n)}_{a,b}\left(\sum_{i=N}^{n} e^{-c\omega_i} > \delta\right) = 0 \mbox{ for all } c >0, \delta>0.
$$
Also, $\tilde \pi^{(n)}_{a,b}$ converges to $\nu_{a,b}$. 
Thus, from Theorem \ref{convergencethm}, as $n\to \infty$,
$\hat{\boldsymbol{X}}^n_{\uparrow}$, with initial condition $\tilde \pi^{(n)}_{a,b}$, converges in distribution, in $D([0,\infty), \RR_+^{\infty})$, to the stationary process ${\boldsymbol{X}}_{\uparrow}$, where  $\boldsymbol{X}$  is the weak solution of  \eqref{eq:unranksde} with initial distribution $\nu_{a,b}$ and with $\delta=b$. This, together with the continuous mapping theorem, proves the second part of the corollary. For the first part, note that from this weak convergence we also have that with $f, k, T$ as in the statement of the corollary,
\begin{align*}
&\limsup_{n\to \infty} \sup_{s,t \in [0,T]} \left| \EE f(\hat X^n_{(1)}(t), \ldots , \hat X^n_{(k)}(t))
- \EE f(\hat X^n_{(1)}(s), \ldots , \hat X^n_{(k)}(s))\right|\\
&= \sup_{s,t \in [0,T]} \left| \EE f(X_{(1)}(t), \ldots , X_{(k)}(t))
- \EE f(X_{(1)}(s), \ldots , X_{(k)}(s))\right| =0,
\end{align*}
where the last equality uses the stationarity of ${\boldsymbol{X}}_{\uparrow}$ under $\nu_{a,b}$.

\subsection{Proof of Corollary \ref{cor:mjsq}.}

We apply Theorem \ref{thm:jack} with $\lambda_i = \lambda_i^{(n)} = n - a_i\sqrt{n}$ and $\mu_i= \mu_i^{(n)} =n$, for $i \in [k]$. Denoting the traffic intensity for station $i$ as $\rho^{(n)}_i$,
\[
\rho_i^{(n)} = \prod_{j=i}^k \left(1-\frac{a_j}{\sqrt{n}}\right).
\]
Then, from Theorem \ref{thm:jack}, denoting the stationary distribution of $\bs{Q}$ as $\hat \pi^n$,
for $z_i \in \NN_0/\sqrt{n}$, $i \in [k]$,
\begin{align*}
\pi^{(n)}\!\left([z_1, \infty) \times \dots \times [z_k, \infty)\right)
&= \hat \pi^{(n)}\!\left([\sqrt{n}z_1, \infty) \times \dots \times  [\sqrt{n}z_k, \infty)\right)
=
\prod_{i=1}^k
\left(\rho_i^{(n)}\right)^{\sqrt{n}z_i}.
\end{align*}
Taking logarithms  yields
\[
\lim_{n\to \infty} \sum_{i=1}^k \sqrt{n}z_i
\sum_{j=i}^k \log\!\left(1-\frac{a_j}{\sqrt{n}}\right)
=
-\sum_{i=1}^k z_i \left(\sum_{j=i}^k a_j\right),
\]
which completes the  proof.
\subsection{Proof of Theorem \ref{jackcor}.}
We apply Theorem \ref{thm:jack} with
$k=n$, $\lambda_i= \gamma^n_i$ and $\mu_i=n$, $i\in [n]$. Denoting the traffic intensity at the $i$-th station as $\lambda^{(n)}_i$, we have
\[
\rho_i^{(n)}
=
\prod_{j=i}^n
\left(1-\frac{a}{n^{3/2}}+\frac{b}{\sqrt{n}}\mathbf{1}_{\{j=1\}}\right).
\]
The first part of the theorem is now immediate from Theorem \ref{thm:jack}. Now consider the second part.

For fixed $i$,
\begin{equation}\label{eq:1232}
\log \pi^{(n)}\!\left([\sqrt{n}z_1, \infty)\times \dots \times [\sqrt{n}z_i, \infty)\right)
=
\sum_{\ell=1}^i \sqrt{n}z_\ell
\sum_{j=\ell}^n
\log\!\left(1-\frac{a}{n^{3/2}}+\frac{b}{\sqrt{n}}\mathbf{1}_{\{j=1\}}\right).
\end{equation}
We use the fact that
$$\log \left (1 - \frac{a}{n^{3/2}}\right) = -\frac{a}{n^{3/2}} + \clr^n_1, \;\;
\log \left (1 - \frac{a}{n^{3/2}} + \frac{b}{\sqrt{n}}\right) = \frac{b}{\sqrt{n}}- \frac{a}{n^{3/2}}
+ \clr^n_2,
$$
where for some $c<\infty$,
$$|\clr^n_1| \le \frac{c}{n^3}, \;\; |\clr^n_2| \le \frac{c}{n}.$$
Using these estimates in \eqref{eq:1232}
$$
\log \pi^{(n)}\!\left([\sqrt{n}z_1, \infty)\times \dots \times [\sqrt{n}z_i, \infty)\right) 
= -\sum_{\ell=1}^i\frac{a(n-\ell+1)}{n}z_{\ell} + z_1 b + \tilde \clr^n_1 + \sqrt{n} z_1 \clr^n_2,
$$
where $|\tilde \clr^n_1| \le \frac{c}{n^{3/2}} \sum_{\ell=1}^i z_{\ell}$.
Thus, sending $n \to \infty$,
$$\lim_{n\to \infty}\log \pi^{(n)}\!\left([\sqrt{n}z_1, \infty)\times \dots \times [\sqrt{n}z_i, \infty)\right) = -a\sum_{\ell=1}^iz_{\ell} + z_1 b.
$$
The result follows.

\subsection{Proof of Corollary \ref{corstat}.}
We will suppress time in the associated stochastic processes since all the calculations are for time $0$.
Recall that, under the law $\widetilde\pi^{(n)}_{(a,b)}$ for the ordered queue-length process, the gap variables
\[
Z_i^{n} = X_{(i)}^{n} - X_{(i-1)}^{n}, \qquad X_{(0)}^{n} = 0,
\]
are independent, and their diffusion-scaled versions $\hat  Z_i^{n} \doteq Z_i^{n}/\sqrt n$, with $\pi^{(n)}$ the probability law of
$(\hat  Z_1^{n}, \ldots , \hat  Z_n^{n})$, satisfy
\[
\pi^{(n)}([z_1,\infty)\times\cdots\times[z_n,\infty))
= \prod_{i=1}^n \bigl(\rho_i^{(n)}\bigr)^{\sqrt n z_i},
\qquad z_i\in \mathbb{N}_0/\sqrt n,
\]
where
\[
\rho_i^{(n)}
= \prod_{j=i}^n \left(1-\frac{a}{n^{3/2}}+\frac{b}{\sqrt n}\mathbf{1}_{\{j=1\}}\right).
\]
Equivalently, each $Z_i^{n}$ is geometric on $\mathbb{N}_0$ with parameter $\rho_i^{(n)}$, and thus
$
\EE[Z_i^{n}] = \rho_i^{(n)}(1-\rho_i^{(n)})^{-1}$.
We now study the asymptotics of $\rho_i^{(n)}$. In what follows, for $\alpha \in \RR$, a term of the form
$O(n^{\alpha})$ is interpreted as a real sequence $c(n)$, such that $\sup_{n \in \NN}|c(n)|/n^{\alpha}< \infty$.

For $i=1$, we have
\[
\rho_1^{(n)}
=
\left(1-\frac{a}{n^{3/2}}+\frac{b}{\sqrt n}\right)
\left(1-\frac{a}{n^{3/2}}\right)^{n-1}.
\]
Using
$\log(1-x) = -x + O(x^2)$ for $x\to 0$,
we obtain
\[
(n-1)\log\left(1-\frac{a}{n^{3/2}}\right)
= -\frac{a(n-1)}{n^{3/2}} + O(n^{-2})
= -\frac{a}{\sqrt n} + O(n^{-3/2}),
\]
and therefore
\[
\left(1-\frac{a}{n^{3/2}}\right)^{n-1}
= 1-\frac{a}{\sqrt n} + O(n^{-1}).
\]
Hence
\[
\rho_1^{(n)}
=
\left(1+\frac{b}{\sqrt n}-\frac{a}{n^{3/2}}\right)
\left(1-\frac{a}{\sqrt n}+O(n^{-1})\right)
= 1-\frac{a-b}{\sqrt n} + O(n^{-1}),
\]
and thus
\begin{equation}\label{eq:est326}
1-\rho_1^{(n)} = \frac{a-b}{\sqrt n}\left(1+ O(n^{-1/2})\right),
\;\;
\EE[Z_1^{n}] = \frac{\sqrt n}{a-b}\left(1+ O(n^{-1/2})\right).
\end{equation}

Next,
for $i\ge 2$, 
$
\rho_i^{(n)} = \left(1-\frac{a}{n^{3/2}}\right)^{n-i+1}.
$
Since,
\[
(n-i+1)\log\left(1-\frac{a}{n^{3/2}}\right)
= -\frac{a(n-i+1)}{n^{3/2}} + O(n^{-2}),
\]
 we obtain for $i \ge 2$,
\begin{equation*}
1-\rho_i^{(n)} = \frac{a(n-i+1)}{n^{3/2}}\left(1+ O(n^{-1/2})\right),
\;\;
\EE[Z_i^{n}] = \frac{n^{3/2}}{a(n-i+1)}\left(1+ O(n^{-1/2})\right).
\end{equation*}
Thus we have that, for each fixed $i\ge 2$,
\begin{equation}\label{eq:est328}
\frac{a}{\sqrt{n}}\EE[Z_i^{n}] \rightarrow 1, \ \text{ as } \ n \rightarrow \infty.
\end{equation}

Consider now the  $k$-th ranked queue.
For fixed $k\ge 1$,
$
\EE[X_{(k)}^{n}]
= \sum_{i=1}^k \EE[Z_i^{n}]$.
Thus, for fixed $k$, combining \eqref{eq:est326} and \eqref{eq:est328},
\[
\frac{\EE[X_{(k)}^{n}]}{\sqrt n\left(\frac{1}{a-b} + \frac{k-1}{a}\right)}  \rightarrow 1, \ \text{ as } \ n \rightarrow \infty.
\]
This proves (i).

Now consider (ii).
Since
$
X_{(n)}^{n} - X_{(1)}^{n} = \sum_{i=2}^n Z_i^{n}$,
we have
\begin{align*}
\EE[X_{(n)}^{n} - X_{(1)}^{n}]
=
\sum_{i=2}^n \frac{n^{3/2}}{a(n-i+1)}\left(1+ O(n^{-1/2})\right)
= \frac{n^{3/2}}{a}\sum_{m=1}^{n-1} \frac{1}{m}\left(1+ O(n^{-1/2})\right).
\end{align*}
Thus,
\[
\frac{\EE[X_{(n)}^{n} - X_{(1)}^{n}]}{n^{3/2}a^{-1} \log n} \rightarrow 1, \ \text{ as } \ n \rightarrow \infty,
\]
proving (ii).

Finally consider part (iii).
Note that
$
\frac{1}{n}\sum_{k=1}^n X_{(k)}^{n}
= \frac{1}{n}\sum_{i=1}^n (n-i+1)Z_i^{n}$.
Thus,
\begin{align*}
\EE\left[\frac{1}{n}\sum_{k=1}^n X_{(k)}^{n}\right]
 &= \frac{\sqrt n}{a-b}\left(1+ O(n^{-1/2})\right) + \frac{1}{n}\sum_{i=2}^n (n-i+1) \frac{n^{3/2}}{a(n-i+1)}\left(1+ O(n^{-1/2})\right) \\
 &= \left(\frac{\sqrt n}{a-b} + \frac{n-1}{n}\frac{n^{3/2}}{a}\right)\left(1+ O(n^{-1/2})\right)\\
 &= \left(\frac{n^{3/2}}{a} + \frac{b\sqrt n}{a(a-b)}\right)\left(1+ O(n^{-1/2})\right).
\end{align*}
This proves (iii), and the corollary.

\section{Proof of Theorem \ref{stationarydistreflectedatlasresult}.}
\label{sec:invmzrpf}
The proof will be a consequence of Theorems \ref{convergencethm2} and Theorem \ref{jackcor}.
Fix $b>0$ and $a \in (b,\infty)$. 
Let $\mu_{a,b}$ be as in the statement of the theorem.
Recall that
$$\Lambda := \{\boldsymbol{z} = (z_1, z_2, \ldots) \in \RR_+^{\infty}: \boldsymbol{x}(\boldsymbol{z}) := (z_1, z_1+ z_2, \ldots) \in \Lambda_0\}$$
and
$$\Lambda_0 = \left\{\boldsymbol{x} = (x_1, x_2, \ldots) \in \RR_+^{\infty}: x_1 \le x_2 \le \cdots \mbox{ and }
\sum_{i=1}^{\infty} \exp(-\alpha x_i^2) <\infty \mbox{ for all } \alpha>0\right\}. 
$$

We begin by observing that $\mu_{a,b}$ is supported on $\clp(\Lambda)$. This can be verified by an application of the strong law of large numbers and we leave the details to the reader.
Recall the measure $\nu_{a,b}$ introduced above Corollary \ref{cor: mix}.
 Since $\mu_{a,b}$ is supported on $\clp(\Lambda)$ we have that
$\nu_{a,b} \in \clp(\Lambda_0)$.

Next suppose that the diffusion scaled initial queue length vector in Theorem \ref{convergencethm}, namely, 
$\hat{\bs{X}}^n = (\hat X^n_1(0), \ldots \hat X^n_n(0))$, is such that, with
$$\hat{\bs{Z}}^n(0) = (\hat Z^n_1(0), \ldots \hat Z^n_n(0)) = (\hat X^n_1(0), \hat X^n_2(0)-\hat X^n_1(0), \ldots, \hat X^n_n(0)- \hat X^n_{n-1}(0)),$$
\begin{equation}\label{eq:1017}
P(\hat{\bs{Z}}^n(0) \in [z_1, \infty)\times \cdots \times [z_n, \infty))
= \pi^{(n)}([z_1, \infty)\times \cdots \times [z_n, \infty)), \quad (z_1,\dots,z_n) \in \RR^\infty_+,
\end{equation}
where $\pi^{(n)}$ is as in Theorem \ref{jackcor}. With these initial conditions, we construct the process $(\hat{\boldsymbol{Y}}^n, \hat{\boldsymbol{U}}^n)$ as in Theorem \ref{convergencethm2}.
 We claim that Condition \ref{condnn} holds, namely
\begin{equation}\label{eq:826n}
\limsup_{N\to \infty} \limsup_{n\to \infty} P\left(\sum_{i=N}^{n} e^{-c\hat X^n_i(0)} > \delta\right) = 0 \mbox{ for all } c >0, \delta>0.
\end{equation}
Assuming the claim, we now complete the proof of the theorem.
Fix $k \in \NN$ and $t>0$. Let $f: \RR_+^k \to \RR$ be a continuous and bounded function.
From Theorem \ref{jackcor}, $\pi^{(n)} \to \mu_{a,b}$ weakly.
Also, from the claim above, and since $\nu_{a,b} \in \clp(\Lambda_0)$, we have, from Theorem \ref{convergencethm2}, for each $s\ge 0$, as $n \to \infty$
\begin{equation}\label{eq:839s}
E f(\hat U^n_1(s), \ldots , \hat U^n_k(s)) \to E f(Z_1(s), \ldots Z_k(s)),
\end{equation}
where $\bs{Z} = (Z_1, \ldots )$ is defined as above \eqref{eq:hatxhatz} with $\bs{X}$ given as the weak solution of 
\eqref{eq:unranksde} with initial condition $\nu_{a,b}$.
Next, applying Theorem \ref{jackcor} again we have that $\pi^{(n)}$ is a stationary distribution for $\hat{\boldsymbol{U}}^n$.
Thus we have that
$$E f(\hat U^n_1(t), \ldots , \hat U^n_k(t)) = E f(\hat U^n_1(0), \ldots , \hat U^n_k(0)).$$
Applying \eqref{eq:839s} we now have that
$$ E f(Z_1(t), \ldots Z_k(t)) =  E f(Z_1(0), \ldots Z_k(0)).$$

Since $k \in \NN$ and $t\ge 0$ are arbitrary, the result follows.

Finally we prove the claim. Fix $c>0$ and $\delta>0$.
By Markov's inequality
\begin{equation}\label{eq:1016}
P\left(\sum_{i=N}^{n} e^{-c\hat X^n_i(0)} > \delta\right) \le
\frac{1}{\delta} \sum_{i=N}^n E e^{-c\hat X^n_i(0)} =
\frac{1}{\delta} \sum_{i=N}^n E e^{-c\sum_{j=1}^i \hat Z^n_j(0)}.
\end{equation}
Because of \eqref{eq:1017} and the definition of $\pi^{(n)}$, we see that 
$\{\hat Z^n_j(0), j \in [n]\}$ are independent random variables with 
$Z^n_j(0) = \sqrt{n} \hat Z^n_j(0)$ distributed as $Geom(1- \rho_j^{(n)})-1$.
Thus we have that
$$
E e^{-c\hat Z^n_j(0)} = \frac{1- \rho_j^{(n)}}{1- \rho_j^{(n)}e^{-c/\sqrt{n}}}.
$$
Note that, for $j \ge 2$,
$$\rho_j^{(n)} = \left( 1 - \frac{a}{n^{3/2}}\right)^{n-j+1}$$
and thus $\rho_j^{(n)}$ is nondecreasing in $j$.
Combining this with the observation that for $u \in (0,1)$, the function $x \mapsto (1-x)/(1-xu)$ is a decreasing function, we see that,
for all $j\ge 2$,
$$
E e^{-c\hat Z^n_j(0)} \le \frac{1- \rho_2^{(n)}}{1- \rho_2^{(n)}e^{-c/\sqrt{n}}}
:= m_n(c).
$$
Next note that
$$\rho_2^{(n)} = \left( 1 - \frac{a}{n^{3/2}}\right)^{n-1} = 1- \frac{a}{\sqrt{n}} + o(n^{-1/2})$$
and therefore
$m_n(c) \to a/(a+c)$ as $n\to \infty$.
Fix $q(c) \in (a/(a+c), 1)$ and let $n_0 \in \NN$ be such that $m_n(c) \le q(c)$ for all $n \ge n_0$.
Then, for all $n\ge n_0$,
$$E e^{-c\hat X^n_i(0)} = \prod_{j=1}^iE e^{-c \hat Z^n_j(0)}
\le \prod_{j=2}^iE e^{-c \hat Z^n_j(0)} \le q(c)^{i-1}.$$
Therefore
$$\sum_{i=N}^n E e^{-c\hat X^n_i(0)} \le \sum_{i=N}^n q(c)^{i-1} \le \frac{q(c)^{N-1}}{1-q(c)}.
$$
Combining this with \eqref{eq:1016} we see that
$$
\limsup_{N\to \infty} \limsup_{n\to \infty}
P\left(\sum_{i=N}^{n} e^{-c\hat X^n_i(0)} > \delta\right) 
\le \limsup_{N\to \infty} \frac{1}{\delta} \frac{q(c)^{N-1}}{1-q(c)} =0.
$$
\hfill \qed

\section*{Acknowledgements}
 Banerjee was partially supported by NSF-CAREER award DMS-2141621.  Budhiraja was partially supported by NSF DMS-2152577 and DMS-2134107. Banerjee, Budhiraja and Loeser were partially funded by NSF RTG grant DMS-2134107. Part of this work was carried out while Loeser was a postdoctoral fellow at the University of North Carolina at Chapel Hill.

\bibliographystyle{amsplain}
\bibliography{loadbalancingbibliography}

\noindent{\scriptsize {\textsc{\noindent S. Banerjee, A. Budhiraja and E. Loeser,\newline
Department of Statistics and Operations Research\newline
University of North Carolina\newline
Chapel Hill, NC 27599, USA\newline
emails: sayan@email.unc.edu, budhiraj@email.unc.edu, ehloeser@unc.edu
\vspace{\baselineskip} } }}
\end{document}